\documentclass[english]{article}

\usepackage{amsmath,amsfonts,amsthm,amssymb}
\usepackage{graphicx}
\usepackage{epstopdf}
\usepackage{enumerate}
\usepackage{booktabs}
\usepackage{verbatim}
\usepackage{nicefrac}
\usepackage{ulem}

\usepackage{wasysym}
\usepackage{hyperref}
\usepackage{stmaryrd} % for double brackets \rrbracket and \llbracket

\usepackage{captdef}
\usepackage{fancyhdr}
\usepackage{tgschola}

\usepackage[overload]{empheq}
\usepackage{xcolor}
\usepackage{float}

\usepackage{lineno}

\graphicspath{{./fig/}}

\newtheorem{theorem}{Theorem}
\newtheorem{proposition}{Proposition}

\newtheorem{remark}{Remark}

\usepackage{xcolor}

\title{Analysis and numerical simulation of a spatio-temporal Ricker-type model for the control of \textit{Aedes aegypti} mosquitoes with Sterile Insect Techniques}
\vspace{1cm}

\author{ Oscar Eduardo Escobar-Lasso$^{1}$, \
 Stefan Frei$^{2}$\footnote{Corresponding author: stefan.frei@uni-konstanz.de}, \ 
 Reinhard Racke$^{2}$, 
 Olga Vasilieva$^{1}$\vspace{5mm} \\
$^1$ \small Department of Mathematics, Universidad del Valle, 760031 Cali, Colombia \\
$^2$ \small Department of Mathematics and Statistics,
University of Konstanz, 78457 Konstanz, Germany \\
}

\date{\today}

\begin{document}
\maketitle

\begin{abstract}
{Sterile Insect Technique (SIT) is widely regarded as a promising, environmentally friendly and chemical-free strategy for the prevention and control of dengue and other vector-borne diseases. 
In this paper, we develop and analyze a spatio-temporal reaction–diffusion model describing the dynamics of three mosquito subpopulations involved in SIT-based biological control of \textit{Aedes aegypti} mosquitoes. } Our sex-structured model explicitly incorporates fertile females together with fertile and sterile males that compete for mating. {Its key features include spatial mosquito dispersal and the incorporation of spatially heterogeneous external releases of sterile individuals. }

{We establish the existence and uniqueness of global, non negative, and bounded solutions, guaranteeing the mathematical well-posedness and biological consistency of the system.} A fully discrete numerical scheme based on the finite element method and an implicit-explicit time-stepping scheme is proposed {and analyzed}. {Numerical simulations confirm the presence of a critical release-size threshold governing eradication versus persistence at a stable equilibrium with reduced total population size, in agreement with the underlying ODE dynamics.} Moreover, the spatial structure of the model allows us to analyze the impact of spatial distributions, heterogeneous releases, and periodic impulsive control strategies, providing insight into the optimal spatial and temporal deployment of SIT-based interventions.
\\ \\
\emph{Keywords:} Sterile Insect Technique, sex-structured model, spatio-temporal dynamics, periodic releases, numerical schemes.
\end{abstract}

%\tableofcontents

%\newpage
\baselineskip 5mm

\section{Introduction}
\label{sec-intro}

The Sterile Insect Technique (SIT) is a species-specific biological control strategy, dating to the mid-20th century, that has been successfully applied to reduce insect populations worldwide \cite{Bourtzis2021}. The method is based on a simple but powerful principle: large numbers of mass-reared sterile males are released into the target area, where they compete with wild males for mating with wild females. Since matings with sterile males produce no viable offspring, the reproductive potential of the wild population gradually declines. If releases are sufficiently frequent and abundant, the wild population can be suppressed or even locally eliminated. 

In recent decades, SIT has been adapted for mosquito control, including \textit{Aedes aegypti} females, the primary transmitters or vectors of several vector-borne human diseases, such as dengue, Chikungunya, Zika, and yellow fever. These diseases impose substantial health and economic burdens in tropical and subtropical regions, where \textit{Aedes aegypti} mosquitoes have colonized almost all urban and peri-urban environments. For decades, conventional vector control strategies have relied heavily on chemical substances. {However, the widespread emergence of insecticide resistance, together with environmental and public health concerns, has significantly limited the long-term sustainability of chemical-based control strategies} \cite{Sarwar2015}.

On the other hand, advances in mass-rearing, sex separation, irradiation, and quality control have enabled large-scale production of sterile males, making SIT an attractive complementary or alternative tool within integrated vector management programs. Notably, SIT is an environmentally friendly approach: it does not rely on insecticide spraying, it is non-harmful to other species (being species-specific), and sterile males neither bite nor transmit pathogens to humans, making this technique completely safe for human health.  A comprehensive overview of SIT applications to \textit{Aedes aegypti} \cite{Oliva2021} provides more information regarding its operational challenges, field trials, and integration with other control strategies.

Mathematical models have become a key tool for designing and evaluating SIT-based vector control strategies because they allow researchers and decision-makers to explore release strategies, assess feasibility, and predict outcomes across different ecological and operational scenarios without relying exclusively on logistically demanding and costly field experiments.  

Models formulated using ordinary differential equations (ODEs) focus only on the temporal dynamics of \textit{Aedes aegypti} populations under SIT-based interventions in spatially homogeneous settings (see, e.g., \cite{Anguelov2020, Bliman2019, Esteva2005, Huang2017, Pang2022, Strugarek2019}). Such models are particularly useful for determining release sizes and frequencies to achieve population suppression, as well as for validating the results through practically meaningful impulsive releases. {However, ODE models inherently assume spatial homogeneity and therefore cannot capture the effects of mosquito dispersal, heterogeneous release patterns, or spatially localized suppression dynamics.}

In contrast, models relying on partial differential equations (PDEs) incorporate both temporal and spatial dynamics, accounting for mosquito dispersal across heterogeneous landscapes. {These models not only allow for the evaluation of the intensity and timing of the release, but also allow for the analysis of where sterile mosquitoes should be released to maximize the impact of SIT-based strategies; this spatial deployment geometry provides a more realistic framework for the design of such strategies.} In particular, PDE-type models provide a powerful theoretical framework for predicting mosquito spread and the spatial propagation of suppression effects across the target area, reducing the need for extensive field experimentation. 

The first PDE model incorporating spatial dispersal in the context of the Sterile Insect Technique was introduced in \cite{Manoranjan1986}. In that work, the wild insect population was assumed to follow logistic-type growth while undergoing diffusion in one spatial dimension. {Although this pioneering study did not incorporate sex or stage structure of the mosquito population, it still laid the foundations for a class of PDE models to understand how sterile releases interact with spatial dispersal processes.} Subsequent works have substantially deepened the theoretical analysis of this initial diffusion-based SIT framework. In particular, homogeneous and space-dependent continuous-time releases of sterile males on the boundary of a closed region were considered in \cite{Jiang2014, Li2012} together with a more refined stability analysis, while a more recent work \cite{Ma2024} has incorporated spatial heterogeneity directly into the mosquito's life traits by allowing their birth and mortality, as well as the release rate of sterile males to vary in space. 

It is also worth mentioning that Almeida et al. \cite{Almeida2022, Almeida2023} performed numerical tests of the model attributed to \cite{Manoranjan1986} under spatially heterogeneous releases of sterile males, demonstrating how non-uniform releases affect the suppression and spatial distribution of the wild mosquito population. A slightly different model with an Allee effect for a mosquito population distributed in several spots along a one-dimensional coastal area of a small island was studied in \cite{Multerer2019} using the optimal control approach for PDEs with unidimensional diffusion. The authors developed an optimal release strategy that imitates simultaneous or sequential releases proportional to the wild mosquito population sizes, initially concentrated along the coastline.

Sex and stage structure of \textit{Aedes aegypti} mosquitoes were also considered by some authors, resulting in more sophisticated PDE modeling frameworks in continuous time \cite{Anguelov2020p, Silva2022, Araujo2025, Dufourd2012, Dufourd2013, Leculier2023, Parshad2011, Ramirez2022}. Among these works, only two formally addressed the existence of solutions of the PDE system with two-dimensional diffusion (see \cite{Araujo2025, Parshad2011}), and the other two proved the existence of traveling wave-type solutions under one-dimensional diffusion \cite{Anguelov2020p, Leculier2023}. While several authors focused only on continuous releases of sterile males, with or without spatial variability \cite{Anguelov2020p, Silva2022, Araujo2025, Leculier2023}, others also analyzed the effects of more practice-oriented impulsive releases \cite{Dufourd2012, Dufourd2013, Ramirez2022}. 

Despite the increasing structural sophistication of spatial SIT models, almost all existing PDE formulations rely on logistic-type growth laws to describe the dynamics of the wild mosquito population, in which intraspecific competition among adult insects directly affects their mortality. While mathematically convenient, this assumption is biologically questionable for the wild \textit{Aedes aegypti}. In natural insect populations, intraspecific competition primarily occurs during the pre-adult (or aquatic) stages and regulates survival to adulthood rather than directly increasing adult mortality. {In contrast, we adopt a continuous-time Ricker-type population dynamics model in the present work, in which density dependence regulates the transition to adulthood rather than directly increasing adult mortality, providing a biologically more realistic description of mosquito population dynamics.} \cite{Bliman2019, Bliman2023, Cardona2025, Escobar2021}. To the best of our knowledge, a {spatiotemporal} SIT model for \textit{Aedes aegypti} combining Ricker-type recruitment, adult sex structure, and two-dimensional diffusion has not been previously analyzed.

From an analytical standpoint, we rigorously establish the well-posedness of the resulting PDE system by proving the existence, uniqueness, nonnegativity, and boundedness of its solutions under biologically meaningful assumptions. On the numerical side, we develop and {analyze a numerical framework} based on the finite element method coupled with an implicit–explicit time-stepping scheme, allowing us to simulate spatially heterogeneous constant and impulsive releases of sterile males. {In particular, we analyze the influence of the additional numerical diffusion inherent to different variants of the explicit-implicit time-stepping both analytically and numerically.} A rigorous numerical analysis of discretization schemes for SIT models with two-dimensional diffusion and nonlinear coupling between wild and sterile populations remains scarce. To date, only the recent study by de Araujo \textit{et al.} \cite{Araujo2025} has thoroughly addressed the development and analysis of such algorithms in this context. Moreover, impulsive releases — which more faithfully represent operational SIT programs — have been comparatively less studied in {spatiotemporal} continuous-time frameworks. {The present paper addresses these gaps by combining rigorous analytical results with a structure-aware numerical framework tailored to spatial heterogeneity, constant and impulsive control strategies, and their impact on suppression and local elimination outcomes.}

The article outline is as follows. In Section~\ref{sec-model}, we introduce the spatiotemporal model based on Ricker-type population dynamics augmented with two-dimensional diffusion. In Section~\ref{sec.analysis}, we establish the well-posedness of the model  via Lipschitz continuity and a parabolic positivity property.  Section~\ref{sec.disc} is devoted to the numerical discretization, as well as an error estimate of the discretization error due to temporal discretization. Section~\ref{sec.res} encompasses detailed numerical results, including a numerical investigation of the required release threshold, the influence of the location of the releases, and the capacity of uniform and spatially heterogeneous continuous and impulsive releases to reduce or eventually eliminate the wild mosquitoes. Finally, Section~\ref{sec-conc} summarizes our findings and highlights their practical applicability.

\section{A generalized SIT-based model}
\label{sec-model}

{In this section, we develop a {spatiotemporal} Ricker-type, sex-structured {reaction–diffusion model that incorporates adult mosquito dispersal and competitive interactions between fertile and sterile males under SIT interventions.}. The original ODE system introduced in \cite{Bliman2019} described the temporal dynamics of adult wild males, wild females, and sterile males under SIT interventions, incorporating density-dependent recruitment through a Ricker-type law while assuming spatial homogeneity. To account for mosquito dispersal and spatial heterogeneity, we extend this framework to a reaction–diffusion system posed on a bounded two-dimensional domain. The resulting model preserves the nonlinear recruitment and mating structure of the original formulation while coupling it with diffusion-driven movement of adult insects.}

We begin by assuming that three mosquito populations---wild males $M(t)$, wild females $F(t)$, and sterile males $M_S(t)$---are present at day $t \geq 0$ in a particular locality. {The ODE model proposed by} Bliman \textit{et al.} \cite{Bliman2019} {is formulated as} 

\begin{equation}\label{ODESys}
\left\{
\begin{array}{rll}
\dfrac{dM}{dt}=&r\rho\dfrac{FM}{M+\gamma M_S}e^{-\sigma\left(M+F\right)}-\mu_MM,\\[4mm]
\dfrac{dF}{dt}=&(1-r)\rho\dfrac{FM}{M+\gamma M_S}e^{-\sigma\left(M+F\right)}-\mu_FF,\\[3mm]
\dfrac{dM_S}{dt}=&\Lambda-\mu_{S}M_S,
\end{array}
\right.
\end{equation}
with initial conditions:
\begin{equation}
M(0)=M_0,\quad F(0)=F_0,\quad M_S(0)=M_{S_0}.
\end{equation}
All parameters  of the model \eqref{ODESys} are positive, and  its concise description is provided in the sequel.
First, the dimensionless sex ratio $(r\in [0,1])$ is the fraction of male offspring, and $\rho$ expresses the number of eggs produced by one female on average per day ($\text{day}^{-1}$). Hence, $r\rho F$ and $(1 - r)\rho F$ correspond to the birth rates of fertile males and fertile females, respectively, in the absence of intra-species competition in aquatic stages. The latter is modeled by the exponential term $e^{- \sigma (M+F)}$, which is consistent with Ricker-type population dynamics models. Here, the parameter $\sigma$ (individual$^{-1}$) is related to larval competition under density dependence.

Second, the ratio $\dfrac{M}{M + \gamma M_S}$ describes the  probability of successful mating after which a female is fertilized. Here, $0 <\gamma \leq 1$ measures the relative mating efficiency of sterile males compared to the wild ones. Thus, the  positive recruitment terms in the first two equations of \eqref{ODESys} stand for the effective reproduction rate of wild male and female \textit{Aedes aei} mosquitoes, i.e., the number of new individuals emerging in one day on average. Notably, sterile males produce no descendants, and their external inflow is modeled by the quantity $\Lambda \geq 0$ that expresses the number of sterile males released per day.  

Finally, all three populations $M, F,$ and $M_S$ decrease due to natural mortality for  of each subpopulation characterized by the natural death rates $\mu_M$, $\mu_F$, and $\mu_S$, respectively ($\text{day}^{-1}$).
 
{The purpose of this paper is to incorporate spatial heterogeneity into the SIT framework by modeling mosquito populations over a bounded domain $\Omega \subset \mathbb{R}^2$ representing their habitat.} Throughout the article, we assume that the boundary $\partial \Omega$ is sufficiently smooth. {Further, all three populations  $M(x,t)$, $F(x,t)$, and $M_S(x,t)$ now evolve both in time and space, where $t \geq 0$ and $x \in \Omega$ and their interaction is governed by a system of coupled reaction–diffusion equations, where diffusion represents the random spatial dispersal of adult individuals across the habitat. The diffusion coefficients $\alpha_M$, $\alpha_F$, and $\alpha_S$ quantify the mobility of each subpopulation, and may reflect biological differences in dispersal behavior between males and females.}

Precisely, we consider the following spatio-temporal model:
\begin{equation}\label{Sys}
\left\{
\begin{array}{rll}
\dfrac{\partial M}{\partial t}=&\alpha_M\Delta M+r\rho\dfrac{FM}{M+\gamma M_S}e^{-\sigma\left(M+F\right)}-\mu_MM,&x\in\Omega,\\
\dfrac{\partial F}{\partial t}=&\alpha_F\Delta F+(1-r)\rho\dfrac{FM}{M+\gamma M_S}e^{-\sigma\left(M+F\right)}-\mu_FF,&x\in\Omega,\\
\dfrac{\partial M_S}{\partial t}=&\alpha_{S}\Delta M_S+\Lambda-\mu_{S}M_S,&x\in\Omega,\\
%\dfrac{\partial M}{\partial \eta}=&\dfrac{\partial F}{\partial \eta}=\dfrac{\partial M_S}{\partial \eta}=0,&x\in\partial\Omega,
\end{array}
\right.
\end{equation}
subject to homogeneous Neumann boundary conditions,
\[
\dfrac{\partial M}{\partial \eta}=\dfrac{\partial F}{\partial \eta}=\dfrac{\partial M_S}{\partial \eta}=0\hspace{10pt}\text{on}\hspace{10pt}\partial\Omega,
\]
and initial conditions
\begin{equation}\label{IC}
M(x,0)=M_0(x),\; F(x,0)=F_0(x),\; M_S(x,0)=M_{S_0}(x),\hspace{10pt} x\in\Omega,
\end{equation}
where $M_0(x)>0$, $F_0(x)\geq0$, $M_{S_0} \neq 0$, and $M_{S_0}(x) \geq 0$ for all $x\in\Omega$. Note that we assume that the population of male mosquitoes $M$ is strictly positive throughout $\Omega$ at time $t=0$, while $M_S$ is assumed to be strictly positive only in some subset of $\Omega$. {These assumptions ensure that the denominators in~\eqref{Sys} remain strictly positive for all $t \geq 0$, thereby preventing singularities and preserving the well-posedness of the system} (see Section~\ref{sec.analysis}).

\section{Analysis of the model}
\label{sec.analysis}

{In this section,} {we establish the well-posedness and fundamental qualitative properties of solutions to the Initial Value Boundary Problem (IVBP) \eqref{Sys}–\eqref{IC}. Since the model describes mosquito population densities, it is desirable and later on very useful to guarantee that all state variables remain nonnegative for all $t \geq 0$, ensuring biological consistency.} {As main result of this section, we show the} existence, uniqueness,  nonnegativity, and boundedness of the solution {to} the model \eqref{Sys}-\eqref{IC}, when {the daily release size $\Lambda$} is considered constant over time. Once the local Lipschitz-continuity of the semilinear term $f$ below has been {proven}, this will follow by well-known arguments, similar to those in \cite{AbRaSc025}.\\
Let us consider the real Banach space $B =C(\Bar{\Omega})$ of continuous vector functions $u:\Bar{\Omega}\rightarrow\mathbb{R}^3$, endowed with the norm $\|u\|=\displaystyle\sum_{i=1}^n\displaystyle\sup_{x\in\Bar{\Omega}}|u_i(x)|.$ Then, the IVBP model \eqref{Sys}-\eqref{IC} can be formally rewritten in $B$ as follows:
\begin{equation}\label{Sys2}
\left\{
\begin{array}{rll}
\dfrac{\partial X(x,t)}{\partial t}=& D \Delta X(x,t)+f(X(x,t)),&(x,t)\in\Omega\times\mathbb{R}_+,\\
X(x,0)=& X_0(x)\geq0_{\mathbb{R}^3},&x\in\Omega,
\end{array}
\right.
\end{equation}
where $X=(M,F,M_S)^{T}, X_0=(M_0,F_0,M_{S_0})^{T}$ with $M_{S_0}\neq 0$ and $^T$ denoting a column vector, $D\in\mathbb{R}^{3\times 3}$ is the diagonal matrix $D=\text{diag}\left(\alpha_M,\alpha_F,\alpha_S\right)$, $\Delta X=(\Delta M,\Delta F,\Delta M_S)^{T}$ and $f=(f_1,f_2,f_3)^{T}$, with
\[
\left\{
\begin{array}{rl}
f_1(X)=&r\rho\dfrac{FM}{M+\gamma M_S}e^{-\sigma\left(M+F\right)}-\mu_MM,\\
f_2(X)=&(1-r)\rho\dfrac{FM}{M+\gamma M_S}e^{-\sigma\left(M+F\right)}-\mu_FF,\\
f_3(X)=&\Lambda-\mu_{S}M_S.
\end{array}
\right.
\]
{Due to fact that the quotient $\frac{M}{M+\gamma M_s}$ is bounded above by one (for nonnegative $M$ and $M_S$) the system is well-defined, as we will discuss in the following.}
{Solutions generate a continuous semiflow in appropriate function spaces (e.g., $C^k(\overline{\Omega})$ or $H^k(\Omega)$) or the Sobolev space $H^k(\Omega)=W^{k,2}(\Omega)$, and become smooth for $t>0$ due to the parabolic character of the system,} see \cite{Capasso1993, Henry1981, Mora1983, Smoller1983}. 

Before studying the main result, we will show two properties related to the model.
We decompose $f$ in \eqref{Sys2} as
\[
f(X)=(r\rho G(X),(1-r)\rho G(X),0)^T+(0,0,\Lambda)^T-(\mu_M M,\mu_F F,\mu_S M_S)^T,
    \]
where 
\begin{align}\label{defG}
G(X):
=\dfrac{FM}{M+\gamma M_S}e^{-\sigma\left(M+F\right)}.
\end{align}
The following proposition shows the Lipschitz property of the model. 

\begin{proposition}\label{PropLipschitz}
For each $X=(M,F,M_S)$ and $\widehat{X}=(\widehat{M},\widehat{F},\hat{M}_S)$ with $M,F,M_S, \widehat{M},\widehat{F},\hat{M}_S \geq 0$ and $M+\gamma M_S >0$, we have
\[
\left|G(X)-G(\widehat{X})\right|\leq\dfrac{L(X)}{M+\gamma M_S}\left|X-\widehat{X}\right|,
\] 
where
\begin{align*}
L(X)&=\sigma FM+F+\gamma^2 (F+M_S) + 1.
\end{align*}
Moreover, we have for $f$ 
\begin{align*}
\left|f(X)-f(\widehat{X})\right|\leq &\left(\max\{r, 1-r\} \dfrac{\rho L(X)}{M+\gamma M_S} + \max\{\mu_M, \mu_F, \mu_S\} \right) \left|X-\widehat{X}\right|.
\end{align*}
\end{proposition}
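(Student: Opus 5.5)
The plan is a telescoping (add-and-subtract) decomposition of $G$ into a \emph{mating factor} $q(X):=\frac{M}{M+\gamma M_S}\in[0,1]$ and a \emph{recruitment factor} $h(M,F):=F e^{-\sigma(M+F)}$, so that $G=q\,h$. I will read $|\cdot|$ on $\mathbb{R}^3$ as the $\ell^1$-norm, consistent with the norm on $B$, so that $|M-\widehat M|,|F-\widehat F|,|M_S-\widehat M_S|\le|X-\widehat X|$. The denominator $M+\gamma M_S$ in the claim involves only the unhatted point, so the order of the telescoping must be chosen to leave exactly this denominator. I will write
\[
G(X)-G(\widehat X)=q(X)\bigl[h(M,F)-h(\widehat M,\widehat F)\bigr]+\bigl[q(X)-q(\widehat X)\bigr]h(\widehat M,\widehat F)
\]
and estimate the two brackets separately.

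For the first bracket I use that $s\mapsto e^{-\sigma s}$ is bounded by $1$ and $\sigma$-Lipschitz on $[0,\infty)$. Writing $h(M,F)-h(\widehat M,\widehat F)=(F-\widehat F)e^{-\sigma(\widehat M+\widehat F)}+F\bigl(e^{-\sigma(M+F)}-e^{-\sigma(\widehat M+\widehat F)}\bigr)$ gives $|h(M,F)-h(\widehat M,\widehat F)|\le(1+\sigma F)|X-\widehat X|$. Multiplying by $q(X)=M/(M+\gamma M_S)$ produces
\[
\frac{\sigma FM+M}{M+\gamma M_S}\,|X-\widehat X|.
\]
This is where the term $\sigma FM$ of $L(X)$ comes from. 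The leftover $M$ has to be absorbed into the remaining terms (and the ``$+1$'') of $L(X)$; if this does not fit, I would bound $M\le M+\gamma M_S$ and only adjust the constant.

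For the second bracket I compute the difference of quotients over a common denominator. The identity
\[
M\widehat M_S-\widehat M M_S=\widehat M_S(M-\widehat M)-\widehat M(M_S-\widehat M_S)
\]
lets the hatted denominator $\widehat M+\gamma\widehat M_S$ be cancelled against $\gamma\widehat M_S$ or $\widehat M$, both of which are dominated by it. This leaves
\[
|q(X)-q(\widehat X)|\le\frac{|M-\widehat M|+\gamma|M_S-\widehat M_S|}{M+\gamma M_S}.
\]
It remains to bound $h(\widehat M,\widehat F)\le\widehat F$ in terms of unhatted quantities, and this is the main obstacle. The naive estimate $\widehat F\le F+|F-\widehat F|$ introduces a quadratic term. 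I would therefore first try the symmetric version of the telescoping, in which $h(M,F)\le F$ multiplies the $q$-difference, and instead cancel $M+\gamma M_S$ in the numerator. That forces the other denominator to remain, so the two versions have to be combined, with the factors $\gamma$ and $\gamma^2$ in front of $F+M_S$ coming from $\gamma\le1$ and from the $\gamma|M_S-\widehat M_S|$ piece. If an exact match with the stated $L(X)$ fails, I would still obtain a bound of the same structure, namely locally Lipschitz with a constant depending continuously on $X$ and blowing up only as $M+\gamma M_S\to0$. That is what the subsequent well-posedness argument actually uses.

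The estimate for $f$ is then immediate. The first two components of $f(X)-f(\widehat X)$ are $r\rho\,(G(X)-G(\widehat X))-\mu_M(M-\widehat M)$ and $(1-r)\rho\,(G(X)-G(\widehat X))-\mu_F(F-\widehat F)$. The third component is $-\mu_S(M_S-\widehat M_S)$, since the constant $\Lambda$ cancels. Summing the absolute values, bounding $r,1-r$ by $\max\{r,1-r\}$ and each $\mu$ by $\max\{\mu_M,\mu_F,\mu_S\}$, and inserting the bound for $G$ gives the stated inequality, with the $\ell^1$-norm gathering the linear mortality terms into a single $|X-\widehat X|$.
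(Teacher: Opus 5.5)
Your suspicion that the leftover $M$ does not fit is correct, and it cannot be repaired: the inequality for $G$ is false as stated. Take $M>0$, $X=(M,0,0)$ and $\widehat X=(M,\varepsilon,0)$. Then $G(X)=0$, $G(\widehat X)=\varepsilon e^{-\sigma(M+\varepsilon)}$, $L(X)=1$ and $M+\gamma M_S=M$. The claim therefore reads $Me^{-\sigma(M+\varepsilon)}\le 1$. This fails for $M=1/\sigma$ and small $\varepsilon$ whenever $\sigma<1/e$; for the paper's value $\sigma=1/2800$ the left side is about $10^3$. The $+1$ has the wrong scaling: $G$ and $X$ both count individuals, so $L(X)/(M+\gamma M_S)$ must be dimensionless. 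The paper's proof contains exactly this slip. It bounds the piece $M\widehat M(F-\widehat F)/\bigl((M+\gamma M_S)(\widehat M+\gamma\widehat M_S)\bigr)$ by $|F-\widehat F|$, i.e.\ with coefficient $1=(M+\gamma M_S)/(M+\gamma M_S)$, but then records it as $+1$ in the numerator of $L(X)$. That piece is the counterpart of your leftover $M/(M+\gamma M_S)$. The true statement has $+1$ replaced by $+(M+\gamma M_S)$. Collecting coefficients componentwise, even $L(X)=\sigma FM+F+M+\gamma M_S$ suffices. So the $\gamma^2$-terms you are trying to reproduce are superfluous; the paper obtains them from $\widehat M/(\widehat M+\gamma\widehat M_S)\le\gamma$, which is false for $\gamma<1$.

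Independently of this, your argument has a genuine hole at the second bracket. You never bound $h(\widehat M,\widehat F)$, and the remedy you sketch rests on a misconception. Your estimate $|q(X)-q(\widehat X)|\le(|M-\widehat M|+\gamma|M_S-\widehat M_S|)/(M+\gamma M_S)$ was obtained by cancelling the hatted denominator, and it holds whichever factor multiplies it. So nothing forces $\widehat M+\gamma\widehat M_S$ to remain, and nothing has to be combined. In
\[
G(X)-G(\widehat X)=q(\widehat X)\bigl[h(M,F)-h(\widehat M,\widehat F)\bigr]+\bigl[q(X)-q(\widehat X)\bigr]h(M,F)
\]
you can use $q(\widehat X)\le 1$, your $(1+\sigma F)$-bound for the $h$-difference (which involves only the unhatted $F$), and $h(M,F)\le F$. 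This gives $|G(X)-G(\widehat X)|\le\bigl(1+\sigma F+F/(M+\gamma M_S)\bigr)|X-\widehat X|$, a correct bound of the corrected type. Alternatively, keep your order and use $h(\widehat M,\widehat F)\le\sup_{s\ge0}se^{-\sigma s}=1/(\sigma e)$.

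The paper avoids the issue by grouping differently. It attaches $F$ to the mating quotient, $G=\frac{FM}{M+\gamma M_S}\,e^{-\sigma(M+F)}$, so the hatted factor in the telescoping is $e^{-\sigma(\widehat M+\widehat F)}\le 1$. It then expands $FM\widehat M_S-\widehat F\widehat MM_S=F\widehat M_S(M-\widehat M)+F\widehat M(\widehat M_S-M_S)+\widehat MM_S(F-\widehat F)$, in which only $\widehat M$ and $\widehat M_S$ occur hatted and are absorbed by $\widehat M+\gamma\widehat M_S$.

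Finally, your $f$-step does not produce $\max\{r,1-r\}$. In the $\ell^1$-norm the first two components contribute $r\rho|G(X)-G(\widehat X)|+(1-r)\rho|G(X)-G(\widehat X)|=\rho|G(X)-G(\widehat X)|$, so the prefactor should be $\rho$. Bounding $r$ and $1-r$ separately by their maximum even gives $2\max\{r,1-r\}\rho$. The paper's ``straightforward calculations'' share this defect. For the subsequent well-posedness and convergence results, any such corrected local Lipschitz bound suffices, as long as it degenerates only when $M+\gamma M_S\to 0$.
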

\begin{proof}
For $M,F,M_S,\widehat{M},\widehat{F},\hat{M}_S\geq0,$ we have
\begin{align}
\left|G(X)-G(\widehat{X})\right|&=\left|\dfrac{FM}{M+\gamma M_S}e^{-\sigma\left(M+F\right)}-\dfrac{\widehat{F}\widehat{M}}{\widehat{M}+\gamma \widehat{M}_S}e^{-\sigma\left(\widehat{M}+\widehat{F}\right)}\right| \notag \\[3mm]
&\leq\left|\dfrac{FM}{M+\gamma M_S}\right|\left|e^{-\sigma\left(M+F\right)}-e^{-\sigma\left(\widehat{M}+\widehat{F}\right)}\right|+\left|\dfrac{FM}{M+\gamma M_S}-\dfrac{\widehat{F}\widehat{M}}{\widehat{M}+\gamma \widehat{M}_S}\right|e^{-\sigma\left(\widehat{M}+\widehat{F}\right)} \notag \\[3mm]
&\leq\left|\dfrac{FM}{M+\gamma M_S}\right|\left|e^{-\sigma\left(M+F\right)}-e^{-\sigma\left(\widehat{M}+\widehat{F}\right)}\right|+\left|\dfrac{FM}{M+\gamma M_S}-\dfrac{\widehat{F}\widehat{M}}{\widehat{M}+\gamma \widehat{M}_S}\right|. \label{eq:sys1}
\end{align}
For the first term in~\eqref{eq:sys1}, we have by using estimates of the form $\big|g(x) - g(\hat{x})\big| \leq \max \limits_{x\geq 0} |g'(x)| |x-\hat{x}|$ for certain functions $g$, as well as $|\exp(-\sigma M)|, \big|\exp(-\sigma \hat{F}) \big|\leq 1$:
\[\begin{array}{rl}
\left|e^{-\sigma\left(M+F\right)}-e^{-\sigma\left(\widehat{M}+\widehat{F}\right)}\right|
\leq&\left|e^{-\sigma\left(M+F\right)}-e^{-\sigma\left(M+\widehat{F}\right)}\right|+\left|e^{-\sigma\left(M+\widehat{F}\right)}-e^{-\sigma\left(\widehat{M}+\widehat{F}\right)}\right|\\[2mm]
\leq&e^{-\sigma M}\left|e^{-\sigma F}-e^{-\sigma\widehat{F}}\right|+{e^{-\sigma\widehat{F}}}\left|e^{-\sigma M}-e^{-\sigma\widehat{M}}\right|\\[2mm]
\leq&e^{-\sigma M}\max \limits_{F\geq0}\left|-\sigma e^{-\sigma F}\right|\left|F-\widehat{F}\right|+e^{-\sigma\widehat{F}}\max \limits_{M\geq0}\left|-\sigma e^{-\sigma M}\right|\left|M-\widehat{M}\right|\\[3mm]
\leq&\sigma\left|X-\widehat{X}\right|.
\end{array}\]
For the second term in~\eqref{eq:sys1}, we have
\begin{align}\label{secondrhs}
\left|\dfrac{FM}{M+\gamma M_S}-\dfrac{\widehat{F}\widehat{M}}{\widehat{M}+\gamma \widehat{M}_S}\right|
\leq &\left| \dfrac{FM\widehat{M}-\widehat{F}\widehat{M}M}{(M+\gamma M_S)(\widehat{M}+\gamma \widehat{M}_S)} \right| + \gamma\left|\dfrac{FM\hat{M}_S-\widehat{F}\widehat{M}M_S}{(M+\gamma M_S)(\widehat{M}+\gamma \widehat{M}_S)}\right|
\end{align}
The first term on the right-hand side of~\eqref{secondrhs} is bounded by
\begin{align*}
    \left| \dfrac{FM\widehat{M}-\widehat{F}\widehat{M}M}{(M+\gamma M_S)(\widehat{M}+\gamma \widehat{M}_S)} \right| \leq \left| \dfrac{M}{M+\gamma M_S} \right|  \left| \dfrac{\widehat{M}}{\widehat{M}+\gamma \widehat{M}_S}\right| \left| F-\widehat{F}\right| \leq \left| F-\widehat{F}\right|{\leq\left|X-\widehat{X}\right|}. 
\end{align*}
The second term on the right-hand side of~\eqref{secondrhs} can be estimated as follows 
\begin{align*}
\gamma\left|\dfrac{FM\hat{M}_S-\widehat{F}\widehat{M}M_S}{(M+\gamma M_S)(\widehat{M}+\gamma \widehat{M}_S)}\right|
\leq & \ \gamma \left(\left|\dfrac{F\hat{M}_S(M-\widehat{M})}{(M+\gamma M_S)(\widehat{M}+\gamma \widehat{M}_S)}\right|+\left|\dfrac{\widehat{M}F(\widehat{M}_S-M_S)}{(M+\gamma M_S)(\widehat{M}+\gamma \widehat{M}_S)}\right|\right.\\[2mm]
& \hspace{1cm} \left.+\left|\dfrac{\widehat{M}M_S(F-\widehat{F})}{(M+\gamma M_S)(\widehat{M}+\gamma \widehat{M}_S)}\right|\right)\\[2mm]
\leq &  \ \gamma \left(\dfrac{F\left|M-\widehat{M}\right|}{\gamma(M+\gamma M_S)}+\dfrac{{\gamma}F\left|\widehat{M}_S-M_S\right|}{M+\gamma M_S}+\dfrac{{\gamma}M_S\left|F-\widehat{F}\right|}{M+\gamma M_S}\right) \\[2mm]
\leq &\dfrac{1}{M+\gamma M_S}\left(F+\gamma^2 F+\gamma^2 M_S\right)\left|X-\widehat{X}\right|.
\end{align*}
Combining the three estimates, we obtain the first statement of   Proposition ~\ref{PropLipschitz}. The second estimate then follows by straight-fowards calculations from the definition of $f$.
\end{proof}

Proposition ~\ref{PropLipschitz} ensures the local Lipschitz continuity of $f$, if the denominator $M+\gamma M_S$ is positive. To guarantee this for the time-dependent system, we prove the following result for the population of sterile mosquitoes $M_S$, which is governed by a purely parabolic equation.
\begin{proposition}\label{PropLowerBounded}
Let $M_{S_0} \neq 0$ with $M_{S_0}(x) \geq 0$ for all $x\in\Omega$. Then, for every $T> \epsilon>0$ there exists a $\delta>0$ such that $M_S( \cdot,t)\geq \delta$ for all $t\in [\epsilon,T]$.
\end{proposition}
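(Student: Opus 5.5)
The plan is to exploit that the equation for $M_S$ is linear and decoupled from $M$ and $F$:
\[
\partial_t M_S=\alpha_S\Delta M_S+\Lambda-\mu_S M_S\quad\text{in }\Omega,\qquad \partial_\eta M_S=0 \text{ on }\partial\Omega .
\]
Setting $W(x,t)=e^{\mu_S t}M_S(x,t)$ removes the zeroth-order term:
\[
\partial_t W=\alpha_S\Delta W+\Lambda e^{\mu_S t},
\]
with the same Neumann condition and $W(\cdot,0)=M_{S_0}$. Let $S(t)$ denote the Neumann heat semigroup generated by $\alpha_S\Delta$ on $C(\bar\Omega)$. By Duhamel's formula,
\[
W(t)=S(t)M_{S_0}+\int_0^t S(t-s)\,\Lambda e^{\mu_S s}\,ds .
\]
Since $\Lambda\ge0$ and $S$ is positivity preserving (weak maximum principle), the integral term is nonnegative. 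Hence
\[
M_S(\cdot,t)\ge e^{-\mu_S t}\,S(t)M_{S_0},
\]
and it suffices to bound $S(t)M_{S_0}$ from below uniformly on $\bar\Omega\times[\epsilon,T]$.

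For this I would use the Neumann heat kernel representation
\[
S(t)u_0(x)=\int_\Omega K(x,y,t)\,u_0(y)\,dy .
\]
The key fact is that on a bounded smooth (connected) domain the Neumann heat kernel is continuous and strictly positive on $\bar\Omega\times\bar\Omega\times(0,\infty)$. This follows from the strong maximum principle together with Hopf's lemma at the boundary, or equivalently from standard Gaussian lower bounds. By continuity and compactness,
\[
\kappa:=\min\{K(x,y,t): x,y\in\bar\Omega,\ t\in[\epsilon,T]\}>0 .
\]
Since $M_{S_0}\ge0$ is continuous and not identically zero, $m:=\int_\Omega M_{S_0}\,dy>0$. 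Therefore, for all $x\in\bar\Omega$ and $t\in[\epsilon,T]$,
\[
M_S(x,t)\ge e^{-\mu_S T}\kappa m=:\delta>0 .
\]

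Alternatively, without heat kernels, one can argue by the strong maximum principle. The function $W\ge0$ is a supersolution of the heat equation and is not identically zero. If $W(x_0,t_0)=0$ for some $t_0>0$, then $W$ would attain an interior or boundary minimum there. Hopf's lemma excludes boundary points because of the Neumann condition, and the strong maximum principle then forces $W\equiv0$ on $\Omega\times(0,t_0]$, contradicting $W(\cdot,0)=M_{S_0}\not\equiv0$ together with continuity in $L^1$ at $t=0$. So $M_S>0$ on $\bar\Omega\times(0,\infty)$. Continuity of $M_S$ on the compact set $\bar\Omega\times[\epsilon,T]$ then yields a positive minimum $\delta$.

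The main obstacle is justifying strict positivity up to the boundary for $t>0$. This is where connectedness of $\Omega$, smoothness of $\partial\Omega$, and the parabolic Hopf lemma under the Neumann condition enter; I would cite these from the standard references already quoted (e.g. Smoller, Henry). The need for $\epsilon>0$ reflects that $M_{S_0}$ may vanish on part of $\Omega$, so no positive bound can hold at $t=0$.
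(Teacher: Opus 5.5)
Your argument is correct and follows essentially the same route as the paper: you remove $\mu_S$ with the factor $e^{\mu_S t}$, discard the $\Lambda$-contribution by positivity (your Duhamel step is the paper's maximum-principle reduction to $\Lambda=0$), and bound $\int_\Omega K(x,t,y)M_{S_0}(y)\,dy$ from below using a uniform lower bound on the Neumann heat kernel for $x,y\in\bar\Omega$, $t\in[\epsilon,T]$. The only difference is that the paper takes this kernel bound from the explicit Gaussian estimate $K\geq c_1t^{-n/2}e^{-c_1|x-y|^2/t}$ of Choulli--Ouhabaz--Yamamoto, whereas you obtain it from strict positivity, continuity and compactness; your alternative strong-maximum-principle/Hopf argument is also valid, and your remark that $\Omega$ must be connected makes explicit an assumption the paper leaves implicit.
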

\begin{proof}
W.l.o.g. we may assume $\mu_s=0$, otherwise consider $\tilde M_S (x,t):= e^{\mu_St}M_S(x,t)$. Since $\Lambda\geq 0$ we may also assume that $\Lambda=0$, by the parabolic maximum principle. Finally we may assume 
$\alpha_S=1$, otherwise consider 
$\hat M_S(x,t):=M_S(\sqrt{\alpha_S}\,x,t)$.

For the solution of the heat equation $\frac{\partial M_S}{\partial t} - \Delta M_S=0$, studied in an arbitrary smoothly bounded domain $\Omega$ in $\mathbb{R}^n$, $n\in\mathbb{N}$, with Neumann boundary condition, we have the representation
$$
M_S(x,t) = \int \limits_{\Omega} K(x,t,y)\, M_{S_0}(y) dy,
$$
with a kernel $K$ satisfying $K\in C^\infty\left(\mathbb{R}^n \times (0,\infty) \times \mathbb{R}^n\right)$, and we conclude the following uniform estimate from \cite[Theorem 4]{ChOuYa006}:
\begin{equation} \label{lowerest}
 \exists\,c_1>0\;\; \forall\,x,y\in \overline{\Omega}\;\; \forall\; t>0:\quad
 K(x,t,y) \geq c_1\, t^{-n/2}\,e^{-c_1|x-y|^2/t}.
\end{equation}
This implies, with $d:=\max \limits_{x,y\in \overline{\Omega}} |x-y|$,
\begin{eqnarray*}
 M_S(x,t) &\geq& c_1\,t^{-n/2} \int \limits_{\Omega} e^{-c_1|x-y|^2/t} M_{S_0}(y) dy
 \\  
 &\geq& c_1\,T^{-n/2}e^{-c_1d^2/\epsilon} \int \limits_{\Omega}  M_{S_0}(y) dy =:\delta.
\end{eqnarray*}
\end{proof}

Together with the assumption \eqref{IC} including $M_0>0$, we obtain the Lipschitz continuity of $f$, and hence first a local solution. Combining the previous results, we obtain the global well-posedness of the reaction–diffusion system, which is the main result of this section. 
\begin{theorem}\label{TheoWP}
For any given initial conditions which satisfy \eqref{IC}, there exists a unique global solution of model \eqref{Sys}-\eqref{IC}, which is nonnegative and bounded.
\end{theorem}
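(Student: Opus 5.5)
We follow the standard semigroup route: a local solution first, then nonnegativity and a priori bounds, which together exclude blow-up and give global existence.

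\textbf{Local existence.} We write \eqref{Sys2} in mild form
\[
X(t)=e^{tD\Delta}X_0+\int_0^t e^{(t-s)D\Delta}f(X(s))\,ds
\]
in $B=C(\bar\Omega)$, where $e^{tD\Delta}$ is the analytic, positivity-preserving Neumann heat semigroup acting componentwise. A Banach fixed point argument needs $f$ to be locally Lipschitz on the set where solutions will live. Proposition~\ref{PropLipschitz} provides this on any set where the components are nonnegative, bounded, and $M+\gamma M_S\ge \kappa>0$. Near $t=0$ this holds because $M_0>0$ on the compact set $\bar\Omega$, so $M_0\ge m_0>0$, and by continuity $M\ge m_0/2$ on a short interval. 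For $t\ge\epsilon$, Proposition~\ref{PropLowerBounded} gives $M_S\ge\delta>0$ on $[\epsilon,T]$ independently of $M$, since $M_S$ solves a decoupled linear equation. Hence $M+\gamma M_S\ge\gamma\delta$ there.

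To avoid the singularity cleanly, we replace $G$ by a globally Lipschitz truncation. We replace $\frac{M}{M+\gamma M_S}$ by $\min\{1,\frac{M^+}{(M^++\gamma M_S^+)\vee \kappa}\}$ and $F$ by $F^+$. This yields a unique maximal mild solution, which is classical for $t>0$ by parabolic regularity (\cite{Henry1981,Smoller1983}). Uniqueness for the original problem then follows from the local Lipschitz bound once we know the denominator stays away from zero.

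\textbf{Nonnegativity.} We invoke the quasi-positivity criterion for reaction–diffusion systems with Neumann boundary conditions (\cite{Smoller1983}): $f_i(X)\ge0$ whenever $X\ge0$ and $X_i=0$. Here $f_1=-\mu_M\cdot0+r\rho G\ge0$ when $M=0$ (indeed $G=0$ then), $f_2\ge0$ when $F=0$, and $f_3=\Lambda\ge0$. Combined with the parabolic maximum principle, this gives $X\ge0$.

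\textbf{Bounds.} Since $\frac{M}{M+\gamma M_S}\le1$ and $Fe^{-\sigma(M+F)}\le Fe^{-\sigma F}\le \frac{1}{\sigma e}$, we have $0\le \rho G\le \frac{\rho}{\sigma e}$. Comparison with the ODE $y'=\frac{\rho}{\sigma e}-\mu_M y$ (respectively with $\mu_F$ and $\mu_S$, $\Lambda$), via the maximum principle, gives
\[
0\le M\le \max\Big\{\|M_0\|_\infty,\tfrac{r\rho}{\sigma e\mu_M}\Big\},\quad
0\le F\le \max\Big\{\|F_0\|_\infty,\tfrac{(1-r)\rho}{\sigma e\mu_F}\Big\},\quad
0\le M_S\le \max\Big\{\|M_{S_0}\|_\infty,\tfrac{\Lambda}{\mu_S}\Big\}.
\]
These uniform $L^\infty$ bounds, together with the blow-up alternative for the maximal mild solution, give global existence.

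\textbf{Main obstacle.} The delicate point is the lower bound on the denominator $M+\gamma M_S$ uniformly on $[0,T]$, including the transition between the regime where $M$ is positive near $t=0$ and the regime $t\ge\epsilon$ covered by Proposition~\ref{PropLowerBounded}. We handle it by choosing $\epsilon$ smaller than the time up to which $M\ge m_0/2$ holds. For an alternative independent of $M$, note that $M_t\ge\alpha_M\Delta M-\mu_M M$, since the $G$-term is nonnegative. Comparison then gives $M\ge e^{-\mu_M t}\min M_0>0$ for all $t$. This yields $M+\gamma M_S\ge e^{-\mu_M T}m_0$ on $[0,T]$, so $\kappa$ can be fixed a priori and the truncation is inactive. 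Uniqueness and the semiflow property then follow from the Lipschitz estimate of Proposition~\ref{PropLipschitz} via Gronwall's inequality.
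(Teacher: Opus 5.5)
Your proof is correct, and it takes a partly different and more elementary route than the paper. The paper gets local existence from Proposition~\ref{PropLipschitz} combined with Proposition~\ref{PropLowerBounded}. The latter uses Gaussian lower bounds for the Neumann heat kernel to keep $M_S$, and hence $M+\gamma M_S$, away from zero for $t\ge\epsilon$. Nonnegativity is proved by the same invariance argument you use. Boundedness is obtained by quoting \cite[Lemma 3.1]{Peng2012}, after noting only the linear growth bound $|f(X)|\le C_1|X|+C_2$.

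Your argument differs in two places.
\begin{enumerate}
\item[(i)] You control the denominator through $M$ itself. Since $G\ge 0$, $M$ is a supersolution of $u_t=\alpha_M\Delta u-\mu_M u$, so $M\ge e^{-\mu_M t}\min M_0$ on every finite interval. This makes Proposition~\ref{PropLowerBounded} and the hypothesis $M_{S_0}\not\equiv 0$ superfluous. That matters, because the simulations in Section~\ref{sec.res} take $M_{S_0}=0$.
\item[(ii)] You obtain explicit, uniform-in-time $L^\infty$ bounds by comparison with constant supersolutions, using $0\le\rho G\le \rho/(\sigma e)$. A linear growth bound alone only excludes finite-time blow-up. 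Your argument makes explicit the mechanism behind the uniform bound: the Ricker recruitment term is bounded and the mortality terms are linear and dissipative.
\end{enumerate}
The paper's route does give a lower bound on the denominator that is independent of the wild-male dynamics. It pays for this with heavier machinery and an extra assumption on the data.

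Two minor technical points:
\begin{itemize}
\item As written, your truncation leaves $e^{-\sigma(M+F)}$ unbounded for negative $M$, so the modified nonlinearity is only locally, not globally, Lipschitz. Either truncate the exponent as well, e.g.\ $e^{-\sigma(M^++F^+)}$, or rely on the blow-up alternative together with your a priori bounds.
\item You use $\min_{\bar\Omega}M_0>0$, which is slightly stronger than \eqref{IC} as literally stated on $\Omega$. The paper tacitly needs the same assumption to start its local theory near $t=0$.
\end{itemize}
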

\begin{proof}
The nonnegativity follows from {the invariance principle for parabolic systems} (see \cite[Thm. 14.14]{Smoller1983}) observing the following. Let $H_i:=-X_i$ and $\Omega_i:= \{X_i \geq 0 \}$, then the necessary condition $\nabla H_i\cdot f(X) \leq 0$ on $\partial\Omega_i$ follows easily since $\nabla H_i\cdot f(X) = -f_i\leq 0$ on $\{X_i=0\}$. 

For the boundedness, we observe
$$
|f(X)| \leq C_1 |X| + C_2,
$$
with positive constants $C_1,C_2$. Then, an application of \cite[Lemma 3.1]{Peng2012} yields
$$
\sup_{t\geq 0} \Big\|X(\cdot,t) \Big\|_{L^\infty(\Omega)} < \infty,
$$
{which allows the local solution to be extended globally in time. Consequently, the model admits a unique global solution that remains nonnegative and uniformly bounded for all $t \geq 0$.}

\end{proof}

\section{Discretization}
\label{sec.disc}

We consider the variational formulation of the IVBP  \eqref{Sys}-\eqref{IC} model, {which provides the natural functional framework for the numerical approximation of the system.} For each $t\in[0,T]$ we have for the solution $X(t)=\left(M(t),F(t),M_S(t)\right)$:
\begin{subequations}
\label{VFCP}
\begin{align}
\label{VFCP-M}
(\partial_t M(t), \varphi_M)_{\Omega}+\alpha_M\left(\nabla M(t), \nabla\varphi_M\right)_{\Omega}+\mu_M(M(t), \varphi_M)_{\Omega}-r\rho\left(G(X(t)),\varphi_M\right)_{\Omega}&=0,\\[2mm]
\label{VFCP-F}
(\partial_t F(t), \varphi_F)_{\Omega}+\alpha_F\left(\nabla F(t), \nabla\varphi_F\right)_{\Omega}+\mu_F(F(t), \varphi_F)_{\Omega}-(1-r)\rho\left(G({X(t)}),\varphi_F\right)_{\Omega}&=0,\\[2mm]
\label{VFCP-Ms}
(\partial_t M_S(t), \varphi_S)_{\Omega}+\alpha_S\left(\nabla M_S(t), \nabla\varphi_S\right)_{\Omega}+\mu_S(M_S(t), \varphi_S)_{\Omega}&=\left(\Lambda,\varphi_S\right)_{\Omega},
\end{align}
\end{subequations}
for all $\varphi_M, \varphi_F,\varphi_S\in H^1(\Omega)$ and $G(\cdot)$ defined in \eqref{defG}.\\

\subsection{Temporal Discretization}

We start with the temporal discretization of~\eqref{VFCP}. Therefore, we split the total time interval of interest $I=[0,T]$ into subintervals $I_m=(t_{m-1}, t_m], \  m=1,\ldots,N$, such that
\begin{align*}
0= t_0 < t_1 < \ldots < t_N= T.
\end{align*}
For simplicity, we consider an equidistant time grid: $\delta t_m = t_m - t_{m-1} = \delta t, \ m=1,\ldots,N$. {On each subinterval, we apply a semi-implicit first-order time-stepping scheme of IMEX type, treating the nonlinear reaction term explicitly while discretizing the diffusion and linear decay terms implicitly, see e.g.,~\cite{AscherRuuthWetton95, FreiHeinlein2023}.} For the linear part, we use a one-step-$\theta$ scheme with $\theta \in [\frac{1}{2},1]$. {The explicit treatment of the nonlinear term ensures that, at each time step, only a linear elliptic problem needs to be solved, significantly reducing the computational cost.} 

 Precisely, we approximate $X(t_m) = \big( M(t_m), F(t_m), M_S(t_m) \big)$ for $m=1,\ldots,N$ by the solution $X^m=\left(M^m, F^m, M_S^m\right)$ of the time-discrete problem, when $X^{m-1}=\left(M^{m-1}, F^{m-1}, M_S^{m-1}\right)$ is given:

\begin{eqnarray}
\begin{aligned}\label{VFDP}
\left(\dfrac{M^m-M^{m-1}}{\delta t}, \varphi_M\right)_{\Omega}+\alpha_M  \Big(\theta \nabla M^m + (1-\theta) \nabla M^{m-1}, \nabla\varphi_M\Big)_{\Omega}\hspace*{3cm}&\\
+\mu_M \Big( \theta M^m+ (1-\theta) M^{m-1}, \varphi_M \Big)_{\Omega}-r\rho \Big(G(X^{m-1}),\varphi_M \Big)_{\Omega}&=0,\\[2mm]
\left(\dfrac{F^m-F^{m-1}}{\delta t}, \varphi_F\right)_{\Omega}+\alpha_F\Big(\theta\nabla F^m + (1-\theta)\nabla F^{m-1}, \nabla\varphi_F \Big)_{\Omega}\hspace*{3cm} &\\
+\mu_F \Big(\theta F^m + (1-\theta) F^{m-1}, \varphi_F \Big)_{\Omega}-(1-r) \rho \Big(G(X^{m-1}),\varphi_F\Big)_{\Omega}&=0,\\[2mm]
\left(\dfrac{M_S^m-M_S^{m-1}}{\delta t}, \varphi_S\right)_{\Omega}+\alpha_S \Big(\theta\nabla M_S^m + (1-\theta)\nabla M_S^{m-1}, \nabla\varphi_S \Big)_{\Omega}\hspace*{3cm} &\\
+\mu_S \Big(\theta M_S^m + (1-\theta)M_S^{m-1}, \varphi_S \Big)_{\Omega}
&=\left(\Lambda,\varphi_S\right)_{\Omega}&
\end{aligned}
\end{eqnarray}
for all $\varphi_M, \varphi_F,\varphi_S\in H^1(\Omega).$ For $m=0$, we set $(M^0, F^0, M_S^0) = \big( M(0), F(0), M_S(0) \big)$. 

To estimate the time discretization error, we introduce the following notation. Let $\|\cdot\|_\Omega := \|\cdot\|_{L^2(\Omega)}$ denote the standard $L^2(\Omega)$-norm. For a function $X:=(M,F,M_S)$, we define the following $L^2$- and energy-type norms
\begin{align*}
\|X\|_{\Omega} &:= \Big(\|M\|_\Omega^2 +\|F\|_\Omega^2 + \|M_S\|_\Omega^2\Big)^{1/2} \\
\|X\|_* &:= \Big(\mu_M \|M\|_\Omega^2 + \alpha_M\|\nabla M\|_\Omega^2 +\mu_F \|F\|_\Omega^2 + \alpha_F\|\nabla F\|_\Omega^2 + \mu_S \|M_S\|_\Omega^2 + \alpha_S\|\nabla M_S\|_\Omega^2 \Big)^{1/2}.
\end{align*}
{The norm $|\cdot|_*$ corresponds to the natural energy norm associated with the linear part of the reaction–diffusion operator.}\\
For a time interval $I$ and a {Banach} space $V$, we denote by {$W^{k,\infty}(I, V)$} the corresponding Bochner space with norm {$\|\cdot\|_{W^{k,\infty}(I, V)}$, where $W^{k,\infty}$ denotes the Sobolev space with (weak) derivatives of order $k$ being bounded in $L^{\infty}$}.

\begin{theorem}\label{theo.conv}
 Let the solution $X=(M,F,M_S)$ to~\eqref{VFCP} be in $W^{1,\infty}(I, H^1(\Omega)) \cap W^{2,\infty}(I, L^2( \Omega))$ and denote the solution to~\eqref{VFDP} by $X^m:= (M^m, F^m, M_S^m)$ for $m=1,\ldots,M$ and with $\theta\in[\frac{1}{2}, 1]$. For the discretization error $e_n^X =X(t_n)-X^n$, it holds for $n\leq N$ with a constant $c>0$
\begin{align*}
\big\| e_n^X \big\|_\Omega^2 + \frac{(1-\theta)^2}{\theta} \delta t \big\| e_n^X \big\|_*^2 + \delta t \sum_{m=1}^n &\left(\frac{1}{\delta t} \big\|e_m^X - e_{m-1}^X \big\|_\Omega^2  + \left(2-\frac{1}{\theta}\right) \big\|e_m^X \big\|_*^2 + \theta \left\| e_m^X +\frac{1-\theta}{\theta} e_{m-1}^X\right\|_*^2\right)  \\ 
&\qquad\leq c\exp \big(c_\delta t_M \big) \delta t^2 \left(\big\|\partial_t X\big\|_{L^\infty([0,t_n], H^1(\Omega))}^2 +\left\|\partial_t^2 X\right\|_{L^\infty([0,t_n], L^2( \Omega))}^2\right),     
\end{align*}
where $c_\delta = \dfrac{\rho^2}{\delta_{\min}^2} \big\|L(X) \big\|_{L^\infty([0, t_n]\times\Omega)}^2$ and $\delta_{\min} := \min\limits_{(x,t)\in \Omega\times I} \big( M(x,t)+\gamma M_S(x,t) \big)>0$.
\end{theorem}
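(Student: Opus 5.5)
We follow the standard energy argument for IMEX $\theta$-schemes. First we derive the error equation. We evaluate \eqref{VFCP} at $t_m$ and $t_{m-1}$, form the $\theta$-combination, and subtract \eqref{VFDP}. This gives, for each component (we write it for $M$; $F$ and $M_S$ are analogous, and $M_S$ has no nonlinearity),
\begin{align*}
\Big(\tfrac{e_m^M-e_{m-1}^M}{\delta t},\varphi\Big)_\Omega+\alpha_M\big(\nabla(\theta e_m^M+(1-\theta)e_{m-1}^M),\nabla\varphi\big)_\Omega+\mu_M\big(\theta e_m^M+(1-\theta)e_{m-1}^M,\varphi\big)_\Omega\\
= r\rho\big(G(X(t_{m-1}))-G(X^{m-1}),\varphi\big)_\Omega+(\tau_m,\varphi)_\Omega+\big(\eta_m,\varphi\big)_{*}.
\end{align*}
The consistency residual has two parts. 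The first is $\tau_m=\theta\partial_tX(t_m)+(1-\theta)\partial_tX(t_{m-1})-\frac{X(t_m)-X(t_{m-1})}{\delta t}$, which satisfies $\|\tau_m\|_\Omega\le c\,\delta t\|\partial_t^2X\|_{L^\infty L^2}$ by Taylor expansion. The second comes from the explicit treatment of $G$: the reaction term evaluated at $t_{m-1}$ instead of the $\theta$-combination differs by $\theta(G(X(t_m))-G(X(t_{m-1})))$. Using Proposition~\ref{PropLipschitz}, this is bounded by $c\,\delta t\,\frac{\rho L}{\delta_{\min}}\|\partial_tX\|_{L^\infty L^2}$. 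Positivity of $\delta_{\min}$ is guaranteed on $I$ by Proposition~\ref{PropLowerBounded} together with $M_0>0$ and Theorem~\ref{TheoWP}. Where time derivatives of gradients appear (the $\theta$-shift of the diffusion term), they are bounded by $\delta t\|\partial_tX\|_{L^\infty H^1}$.

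Next we test with $\varphi=2\delta t\, e_m$. Two algebraic identities are used. The first is $2(a-b,a)=\|a\|^2-\|b\|^2+\|a-b\|^2$, which produces the telescoping $\|e_m\|_\Omega^2-\|e_{m-1}\|_\Omega^2$ and the term $\|e_m-e_{m-1}\|_\Omega^2$. For the $\theta$-combined energy term $(\theta e_m+(1-\theta)e_{m-1},e_m)_*$, we write $e_m=\frac1\theta(\theta e_m+(1-\theta)e_{m-1})-\frac{1-\theta}{\theta}e_{m-1}$ and expand. The second identity is then $2(\theta a+(1-\theta)b,a)_*=(2-\tfrac1\theta)\|a\|_*^2+\theta\|a+\tfrac{1-\theta}{\theta}b\|_*^2+\tfrac{(1-\theta)^2}{\theta}(\|a\|_*^2-\|b\|_*^2)$. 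One checks this by expanding with $\theta\|a+\frac{1-\theta}\theta b\|^2=\theta\|a\|^2+2(1-\theta)(a,b)+\frac{(1-\theta)^2}{\theta}\|b\|^2$. Its last term telescopes into the $\frac{(1-\theta)^2}{\theta}\delta t\|e_n\|_*^2$ term on the left, and the first two give exactly the summands in the statement, nonnegative since $\theta\ge\frac12$.

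On the right-hand side, the nonlinear error is estimated by Proposition~\ref{PropLipschitz}. The Lipschitz constant is evaluated at the exact solution $X(t_{m-1})$, so that $L(X)$ and the denominator $M+\gamma M_S\ge\delta_{\min}$ are controlled by exact data only. We then apply Young's inequality:
\[
2\delta t\,\rho\tfrac{\|L(X)\|_\infty}{\delta_{\min}}\|e_{m-1}\|_\Omega\|e_m\|_\Omega\le \delta t\,c_\delta\|e_{m-1}\|_\Omega^2+\delta t\|e_m\|_\Omega^2.
\]
The residual terms are treated by Young's inequality, absorbing $\|e_m\|_*$ pieces into the left energy terms or $\|e_m\|_\Omega^2$ into the Gronwall term. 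This yields $c\,\delta t^3(\|\partial_tX\|^2_{L^\infty H^1}+\|\partial_t^2X\|^2_{L^\infty L^2})$ per step. Summing over $m=1,\dots,n$ gives $n\delta t^3\le t_n\delta t^2$, since $e_0=0$. A discrete Gronwall lemma then produces the factor $\exp(c_\delta t_n)$.

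The main obstacle is the nonlinear term. Proposition~\ref{PropLipschitz} is stated with the Lipschitz constant depending on the first argument and requires that argument to have $M+\gamma M_S>0$. We must therefore arrange the difference so that $X=X(t_{m-1})$ (exact) and $\widehat X=X^{m-1}$. Even so, the proof of Proposition~\ref{PropLipschitz} uses $\frac{\widehat M}{\widehat M+\gamma\widehat M_S}\le1$, which requires nonnegativity of the discrete $X^{m-1}$, or at least a positive denominator. We would handle this either by assuming it (a discrete positivity property of the scheme) or, more safely, by replacing $G$ by its natural extension with $M$ and $M_S$ truncated at $0$ in the analysis, which does not affect the exact solution. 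A secondary care point is the $\theta<1$ case: the $\|e_m\|_*$-absorption relies on $2-\frac1\theta\ge0$ and on the $\theta$-weighted combination term, so residuals involving gradients should be paired against $\theta e_m+(1-\theta)e_{m-1}$ rather than $e_m$.
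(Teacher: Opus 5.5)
Your proof is correct in substance. It shares the paper's skeleton: testing with the current error, the same $\theta$-telescoping identity, Proposition~\ref{PropLipschitz} with the constant taken at the exact solution, and a discrete Gronwall lemma. The difference is in how you decompose the consistency error.

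\textbf{The paper's route.} It evaluates \eqref{VFCP} only at $t_{m-1}$. Because the scheme uses $\theta$-combinations in the diffusion and decay terms, this leaves residuals such as $\theta\alpha_M(\nabla(M(t_m)-M(t_{m-1})),\nabla\varphi_M)_\Omega$. These are bounded by $\delta t\|\partial_t M\|_{H^1(\Omega)}\|e_m^M\|_{H^1(\Omega)}$ and absorbed by Young's inequality into $\frac{\theta}{2}(\alpha_M\|\nabla e_m^M\|_\Omega^2+\mu_M\|e_m^M\|_\Omega^2)$. In exchange, the reaction term has no consistency error, since $G$ is evaluated at $t_{m-1}$ on both sides.

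\textbf{Your route.} You take the $\theta$-combination of \eqref{VFCP} at $t_m$ and $t_{m-1}$. Diffusion and decay then match exactly, so your $\eta_m$ is identically zero. The only residuals are the $L^2$ terms $\tau_m$ and $\theta\rho\,(G(X(t_m))-G(X(t_{m-1})))$; the latter is controlled by Proposition~\ref{PropLipschitz} applied between two exact states.

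\textbf{What each buys.} In your version nothing has to be absorbed into the energy norm. In the paper's version, the Young constant $c_M=1/(2\theta\min\{\alpha_M,\mu_M\})$ consumes the entire $\frac{\theta}{2}$-weighted energy term, even though \eqref{longest} still displays it on the left. That is exactly the term needed to telescope against $\frac{(1-\theta)^2}{2\theta}\|e_{m-1}\|_*^2$, and the spare part $1-\frac{1}{2\theta}$ of its coefficient vanishes at $\theta=\frac12$. Your version therefore covers $\theta=\frac12$ cleanly, and it needs $\partial_t X$ only in $L^\infty([0,t_n],L^2(\Omega))$. Its price is the implicit term $\delta t\|e_m\|_\Omega^2$ on the right. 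So your Gronwall step needs a mild step-size restriction and yields an exponent of the form $C(1+c_\delta)t_n$, not literally $c_\delta t_n$. The paper's clean $c_\delta$ likewise relies on dropping Young constants.

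\textbf{Two smaller points.} First, since $\eta_m\equiv 0$ in your setup, drop the hedges about gradient residuals and about pairing them with $\theta e_m+(1-\theta)e_{m-1}$. At $\theta=\frac12$ that pairing would leave a term $(\eta_m,e_{m-1})_*$ that nothing on the left controls, so it would not have helped anyway.

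Second, your remark on the discrete iterate is correct and catches something the paper passes over. The paper applies Proposition~\ref{PropLipschitz} to $(X(t_{m-1}),X^{m-1})$, but that proposition's proof uses $\widehat M,\widehat F,\widehat M_S\ge 0$ and $\widehat M+\gamma\widehat M_S>0$. Truncation does not fix this for the stated theorem: it proves the estimate for a modified scheme rather than for \eqref{VFDP}. It would also have to include $F$, because the proof uses $e^{-\sigma\widehat F}\le 1$. For $\theta=1$ you can avoid the issue entirely. Each step solves $(1+\delta t\mu_M)M^m-\delta t\alpha_M\Delta M^m=M^{m-1}+\delta t\,r\rho\,G(X^{m-1})$ with Neumann data, and analogously for $F$ and $M_S$. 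The elliptic maximum principle then propagates nonnegativity by induction, and even gives $M^m\ge(1+\delta t\mu_M)^{-m}\min M_0>0$. For $\theta<1$, discrete positivity remains an additional assumption.
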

\begin{remark}\label{rem.diff}
Theorem~\ref{theo.conv} implies in particular first-order convergence ${\cal O}(\delta t)$ for $\big\|M(t_m) - M^m \big\|_\Omega, \ \big\|F(t_m) - F^m \big\|_\Omega$, and $\big\| M_S(t_m) - M_S^m \big\|_\Omega$ for $\delta t\to 0$ (after taking the square root). The term under the sum can be viewed as an additional \textit{numerical diffusion} introduced by the time discretization, and increases as $\theta$ grows. 
\end{remark}
\begin{proof}
We start by deriving an estimate for $M$ based on the first equation in~\eqref{VFDP}. We set $t=t_{m-1}$ in \eqref{VFCP-M} and subtract it from \eqref{VFDP} to get
\begin{align}
\begin{split}\label{firsterr}
&\left(\partial_tM(t_{{m-1}})-\dfrac{M(t_m)-M(t_{m-1})}{\delta t},\varphi_M\right)_{\Omega}+\left(\dfrac{M(t_m)-M(t_{m-1})}{\delta t}-\dfrac{M^m-M^{m-1}}{\delta t}, \varphi_M\right)_{\Omega}\\[2mm]
&\qquad+r\rho\Big(G(X^{m-1})-G \big(X(t_{m-1}) \big),\varphi_M\Big)_{\Omega} +\alpha_M\Big(\nabla \big(M(t_{m-1})- \theta M^m - (1-\theta)M^{m-1}\big), \nabla\varphi_M\Big)_{\Omega}\\[2mm]
&\qquad \qquad\qquad+\mu_M\Big(M(t_{m-1})-\theta M^m- (1-\theta)M^{m-1}, \varphi_M\Big)_{\Omega}=0
\end{split}
\end{align}
for all $\varphi_M\in H^1(\Omega)$. By a first-order Taylor expansion, it holds that
\begin{align*}
    \big\|M(t_m) - M(t_{m-1}) \big\|_{H^1(\Omega)} \leq \delta t \max_{t\in[t_{m-1},t_m]}\|\partial_t M\|_{H^1(\Omega)}.
\end{align*}
We apply this estimate to the {last two} terms in~\eqref{firsterr} in combination with the Cauchy-Schwarz inequality. Using the notation $e_m^M:=M(t_m)-M^m$ for $m=0,\ldots,N$, we obtain from~\eqref{firsterr}
\begin{multline}\label{seconderr}
\left(\partial_tM(t_{m-1})-\dfrac{M(t_m)-M(t_{m-1})}{\delta t},\varphi_M\right)_{\Omega}+\left(\dfrac{e_m^M-e_{m-1}^M}{\delta t}, \varphi_M\right)_{\Omega}\\[2mm]
+\alpha_M \Big(\nabla \left(\theta e_m^M - (1-\theta)e_{m-1}^M\right), \nabla\varphi_M\Big)_{\Omega}+\mu_M\Big(\theta e_m^M- (1-\theta)e_{m-1}^M, \varphi_M\Big)_{\Omega}\\[2mm]
+r\rho\Big(G \big(X^{m-1} \big)-G \big(X(t_{m-1}) \big),\varphi_M\Big)_{\Omega} \leq \delta t \big\|\varphi_M \big\|_{H^1(\Omega)}\max_{t\in[t_{m-1},t_m]} \big\|\partial_t M \big\|_{H^1(\Omega)}.
\end{multline}
Choosing $\varphi_M=e_m^M$ and using that by a telescope equality 
\begin{align*}
\Big(\theta e_m^M + (1-\theta)e_{m-1}^M,  e_m^M\Big)_\Omega &= \theta \left(e_m^M + \frac{1-\theta}{\theta} e_{m-1}^M,  e_m^M\right)_\Omega \\[2mm]
&=\frac{\theta}{2} \left( \big\|e_m^M \big\|_\Omega^2 + \left\|e_m^M +  \frac{1-\theta}{\theta} e_{m-1}^M \right\|_\Omega^2  -\left(\frac{1-\theta}{\theta}\right)^2 \big\|e_{m-1}^M \big\|_\Omega^2 \right),
\end{align*}
and similarly for the second and third term in~\eqref{seconderr}, we get
\begin{multline}\label{longest1}
\dfrac{1}{2\delta t}\left( \big\|e_m^M \big\|_\Omega^2 + \left\|e_m^M-e_{m-1}^M\right\|^2_{\Omega} -  \big\|e_{m-1}^M \big\|_\Omega^2 \right)+ \frac{\theta}{2} \alpha_M \left(\left\|\nabla e_m^M\right\|^2_{\Omega} +  \left\|\nabla \left( e_m^M +  \frac{1-\theta}{\theta} e_{m-1}^M \right) \right\|_\Omega^2 \right. \\[2mm] 
\left. -\left(\frac{1-\theta}{\theta}\right)^2 \|\nabla e_{m-1}^M\|_\Omega^2\right)
+ \frac{\theta}{2} \mu_M\left(\big\| e_m^M \big\|^2_{\Omega}   + \left\|e_m^M +  \frac{1-\theta}{\theta} e_{m-1}^M \right\|_\Omega^2 -  \big\|e_{m-1}^M \big\|_\Omega^2\right) \\[2mm]
\leq \Bigg(\delta t \max_{t\in[t_{m-1},t_m]} \big\|\partial_t M \big\|_{H^1(\Omega)} +  \left\|\partial_tM(t_m)-\dfrac{M(t_m)-M(t_{m-1})}{\delta t}\right\|_{\Omega}  \\[2mm]
+r\rho\Big\| G \big({X^{m-1}} \big)-G \big({X(t_{m-1}}) \big) \Big\|_{\Omega}\Bigg)  \big\| e_m^M \big\|_{H^1(\Omega)}, 
\end{multline}
where we have used the Cauchy-Schwarz inequality on the right-hand side.
By another Taylor expansion, for the second term on the right-hand side {of~\eqref{longest1}}, it holds that
\begin{align*}
 \left\|\partial_tM(t_{m-1})-\dfrac{M(t_m)-M(t_{m-1})}{\delta t}\right\|_{\Omega}
\leq\dfrac{\delta t}{2}\max_{t\in[t_{m-1},t_m]}\Big\|\partial_t^2M(t)\Big\|_{\Omega}.
\end{align*}
Using the Lipschitz continuity of $G(\cdot)$ shown in Proposition~\ref{PropLipschitz}, we obtain for the third term on the right-hand side of \eqref{longest1} that
\begin{align*}
\Big\|G({X^{m-1}})-G(X({t_{m-1}})) \Big\|_{\Omega} \leq \frac{L(X({t_{m-1}}))}{\delta_{\rm min}} \Big\|{X^{m-1}} - X({t_{m-1}}) \Big\|_\Omega.
\end{align*}
Using Young's inequality {with constant $c_M= \dfrac{1}{2\theta \min\{\alpha_M,\mu_M\}}$}, and absorbing the term $\left\|e_m^M\right\|_{H^1(\Omega)}$ into the left-hand side, we obtain from~\eqref{longest1}
\begin{multline}\label{longest}
\dfrac{1}{2\delta t}\left( \big\|e_m^M \big\|_\Omega^2 + \left\|e_m^M-e_{m-1}^M\right\|^2_{\Omega}\right)+ \dfrac{\theta}{2} \left(\alpha_M\left\| \nabla e_m^M\right\|^2_{\Omega} +\mu_M\left\|e_m^M\right\|^2_{\Omega} + \alpha_M \left\|\nabla \left(e_m^M +  \frac{1-\theta}{\theta} e_{m-1}^M \right) \right\|_\Omega^2 \right. \\[2mm]
\left. + \mu_M \left\|e_m^M +  \frac{1-\theta}{\theta} e_{m-1}^M \right\|_\Omega^2\right) \\[2mm]
\leq  c_M\delta t^2 \left(\max_{t\in[t_{m-1},t_m]} \big\|\partial_t M \big\|_{H^1(\Omega)}^2 + \dfrac{1}{4}\max_{t\in[t_{m-1},t_m]}\left\|\partial_t^2M(t)\right\|_{\Omega}^2\right) + \frac{1}{2\delta t } \big\|e_{m-1}^M \big\|_\Omega^2 \\[2mm]
+\frac{r^2\rho^2}{2\delta_{\min}^2} 
L(X({t_{m-1}}))^2 \Big\|{X^{m-1}} - X({t_{m-1}}) \Big\|_\Omega^2 
 + \frac{(1-\theta)^2}{2\theta} \Big(\alpha_M \big\|\nabla e_{m-1}^M \big\|_\Omega^2 + \mu_M \big\|e_{m-1}^M \big\|_\Omega^2 \Big).
\end{multline}

Analogously, we obtain for the error $e_m^F:=F(t_m)-F^m$ from the second equation in~\eqref{VFDP}
\begin{multline}
\dfrac{1}{2\delta t}\left( \big\|e_m^F \big\|_\Omega^2 + \left\|e_m^F-e_{m-1}^F\right\|^2_{\Omega}\right)+ \frac{\theta}{2} \left(\alpha_F\left\|\nabla e_m^F\right\|^2_{\Omega} +\mu_F\left\|e_m^F\right\|^2_{\Omega} + \alpha_F \left\|\nabla \left(e_m^F +  \frac{1-\theta}{\theta} e_{m-1}^F \right) \right\|_\Omega^2 \right. \\[2mm]
\left. + \mu_F \left\|e_m^F +  \frac{1-\theta}{\theta} e_{m-1}^F \right\|_\Omega^2\right) \\[2mm]
\leq  c_F\delta t^2 \left(\max_{t\in[t_{m-1},t_m]} \big\|\partial_t F \big\|_{H^1(\Omega)}^2  + \dfrac{1}{4}\max_{t\in[t_{m-1},t_m]}\left\|\partial_t^2 F(t)\right\|_{\Omega}^2\right)  + \frac{1}{2\delta t } \big\|e_{m-1}^F \big\|_\Omega^2 
\\[2mm]+\frac{(1-r)^2\rho^2}{2\delta_{\min}^2} 
L(X({t_{m-1}}))^2 \Big\|{X^{m-1}} - X({t_{m-1}}) \Big\|_\Omega^2
 + \frac{(1-\theta)^2}{2\theta}\Big(\alpha_F \big\|\nabla e_{m-1}^F \big\|_\Omega^2 + \mu_F \big\|e_{m-1}^F \big\|_\Omega^2\Big) \hspace{5mm}
\end{multline}   
with $c_F>0$ and for $e_m^{M_S}:=M_S(t_m)-M_S^m$ from the third equation
\begin{multline}
\dfrac{1}{2\delta t}\left( \left\|e_m^{M_S} \right\|_\Omega^2 + \left\|e_m^{M_S}-e_{m-1}^{M_S}\right\|^2_{\Omega}\right)+ \frac{\theta}{2} \left(\alpha_S\left\|\nabla e_m^{M_S}\right\|^2_{\Omega} +\mu_S\left\|e_m^{M_S}\right\|^2_{\Omega} + \alpha_S \left\|\nabla \left( e_m^{M_S} +  \dfrac{1-\theta}{\theta} e_{m-1}^{M_S} \right) \right\|_\Omega^2 \right. \\[2mm]
\left. + \mu_S\left\| e_m^{M_S} +  \frac{1-\theta}{\theta} e_{m-1}^{M_S} \right\|_\Omega^2\right) \\[2mm]
\leq c_{M_S}\delta t^2 \left(\max_{t\in[t_{m-1},t_m]} \left\|\partial_t M_S \right\|_{H^1(\Omega)}^2 + \dfrac{1}{4}\max_{t\in[t_{m-1},t_m]} \left\|\partial_t^2M_S(t)\right\|_{\Omega}^2\right)
+ \frac{1}{2\delta t } \left\|e_{m-1}^{M_S} \right\|_\Omega^2 \\[2mm]
+ \frac{(1-\theta)^2}{2\theta} \left(\alpha_S \left\|\nabla e_{m-1}^{M_S} \right\|_\Omega^2 + \mu_S \left\|e_{m-1}^{M_S}\right\|_\Omega^2\right), \hspace{5mm}
\end{multline}   
where $c_{M_S}>0$. Adding the three equations and using that $r^2+ (1-r)^2 \leq 1$ for $r\in [0,1]$ yields
\begin{multline}
\dfrac{1}{2\delta t}\left( \left\|e_m^X \right\|_\Omega^2 + \left\|e_m^{X}-e_{m-1}^X\right\|^2_{\Omega}\right) +\frac{\theta}{2} \left(\left\|e_m^X\right\|^2_* + \left\|e_m^X +  \frac{1-\theta}{\theta} e_{m-1}^X \right\|_*^2\right) \\[2mm]
\leq  c\delta t^2 \left(\max_{t\in[t_{m-1},t_m]} \big\|\partial_t X \big\|_{H^1(\Omega)}^2 + \dfrac{1}{4}\max_{t\in[t_{m-1},t_m]}\left\|\partial_t^2 X(t)\right\|_{\Omega}^2\right)  +\frac{\rho^2}{2\delta_{\min}^2} L\big(X({t_{m-1}}) \big)^2 \big\|e_{{m-1}}^X \big\|_{\Omega}^2\\[2mm] 
+ \frac{1}{2\delta t } \big\|e_{m-1}^X \big\|_\Omega^2 
+ \frac{(1-\theta)^2}{2\theta} \big\|e_{m-1}^X \big\|_*^2. \hspace{6mm}
\end{multline}
We split the factor in front of $\big\|e_m^X \big\|_*$ into\, $\dfrac{\theta}{2}=\dfrac{(1-\theta)^2}{2\theta} + \left(1-\dfrac{1}{2\theta}\right)$ to match the factor in front of $\big\|e_{m-1}^X\big\|_*^2$ on the right-hand side.
Summation {over} $m=1,\ldots,n$ and multiplication by $2\delta t$ results in 
\begin{multline*}
\|e_n^X\|_\Omega^2 + \frac{(1-\theta)^2}{\theta} \delta t \left\|e_n^X\right\|^2_* + \delta t \sum_{m=1}^n  \frac{1}{\delta t}\left\|e_m^{X}-e_{m-1}^X\right\|^2_{\Omega} + \left(2-\frac{1}{\theta}\right)\|e_m^X\|_* + \theta  \|e_m^X +  \frac{1-\theta}{\theta} e_{m-1}^X\|_\Omega^2\\
\leq  c\delta t^2 \left(\max_{t\in[0,t_n]}\|\partial_t X\|_{H^1(\Omega)}^2 + \dfrac{1}{4}\max_{t\in[0,t_n]}\left\|\partial_t^2 X(t)\right\|_{\Omega}^2\right)  +\left\|e_0^X\right\|^2 + \frac{(1-\theta)^2}{\theta} \delta t \left\|e_0^X\right\|^2_* \\
+ \delta t {\sum_{m=0}^{n-1}} \frac{\rho^2}{\delta_{\min}^2} L(X(t_m))^2 \|e_m^X\|_{\Omega}^2. 
\end{multline*} 

Using that, by definition $e_0^X = 0$, application of a discrete Gronwall inequality yields the statement.
\end{proof}

\subsection{Spatial Discretization by Finite Elements}

We discretize the system~\eqref{VFDP} using continuous piecewise linear finite elements in space. To this purpose, let ${\cal T}_h$ be  a triangulation of the domain $\Omega$ into triangular elements $T$ of (maximum) size $h$. The space of piecewise linear finite elements is defined as
\begin{align*}
{\cal V}_h:= \{ v\in C(\bar{\Omega}\, |\, v|_T \in P_1(T) \, \forall T\in {\cal T}_h\},  
\end{align*}
where $P_1(T)$ denotes the space of linear functions on $T$. 

Let $\big (M_h^0, F_h^0, M_{S,h}^0 \big) := \big( M(0), F(0), M_S(0) \big)$. {The resulting fully discrete finite element scheme reads as follows:} \textit{For $m=1,\ldots,M$, find $X_h^m = \big( M_h^m,F_h^m, M_{S,h}^m \big)\in \left({\cal V}_h\right)^3$, such that 
}
\begin{eqnarray}
\begin{aligned}\label{fullydisc}
\left(\dfrac{M_h^m-M_h^{m-1}}{\delta t}, \varphi_M\right)_{\Omega}+\alpha_M  \Big(\theta \nabla M_h^m + (1-\theta) \nabla M_h^{m-1}, \nabla\varphi_M\Big)_{\Omega}\hspace*{3cm}&\\[2mm]
+\mu_M \Big( \theta M_h^m+ (1-\theta) M_h^{m-1}, \varphi_M \Big)_{\Omega}-r\rho\Big(G(X_h^{m-1}),\varphi_M\Big)_{\Omega}&=0,\\[2mm]
\left(\dfrac{F_h^m-F_h^{m-1}}{\delta t}, \varphi_F\right)_{\Omega} +\alpha_F \Big( \theta\nabla F_h^m + (1-\theta)\nabla F_h^{m-1}, \nabla\varphi_F\Big)_{\Omega}\hspace*{3cm} &\\[2mm]
+\mu_F \Big(\theta F_h^m + (1-\theta) F_h^{m-1}, \varphi_F \Big)_{\Omega} - (1-r) \rho\Big(G(X_h^{m-1}),\varphi_F\Big)_{\Omega}&=0,\\[2mm]
\left(\dfrac{M_{S,h}^m-M_{S,h}^{m-1}}{\delta t}, \varphi_S\right)_{\Omega}+\alpha_S\Big(\theta\nabla M_{S,h}^m + (1-\theta)\nabla M_{S,h}^{m-1}, \nabla\varphi_S\Big)_{\Omega}
+\mu_S \Big( \theta M_{S,h}^m + (1-\theta)M_{S,h}^{m-1}, \varphi_S \Big)_{\Omega}&=\big(\Lambda,\varphi_S\big)_{\Omega}
\end{aligned}
\end{eqnarray}
for all $\big(\varphi_M, \varphi_F,\varphi_S \big)\in \left({\cal V}_h\right)^3.$\\
{At each time step, the scheme leads to three {uncoupled linear equations}, since the nonlinear reproduction term is evaluated explicitly at the previous time level. Standard finite element approximation theory combined with the temporal error estimate yields first-order convergence in time and space under suitable regularity assumptions.}

\section{Numerical results}
\label{sec.res}

In this section, we present numerical simulations to illustrate the results obtained in the previous sections, compare them with the ODE model presented in Bliman \textit{et al.} in \cite{Bliman2019} and further investigate the capabilities and limitations of the PDE model. The values of the parameters related to mosquito life traits used in the simulations are summarized in Table \ref{TabBliman}. As in \cite{Bliman2019,Bliman2023}, the carrying capacity of the wild mosquito population is scaled to 1 hectare by an appropriate choice of $\sigma$. While the entomological parameters are approximately known from previous publications \cite{Bliman2019,Bliman2023}, the values of the diffusion coefficients $\alpha_F, \alpha_M, \alpha_{M_S}$ can be estimated using biological insights as follows. 

Multiple Mark-Release-Recapture (MRR) studies reveal that \textit{Aedes aegypti} mosquitoes typically disperse only short distances in urban environments (for more details, see \cite{Harrington2005} and the references therein). In field experiments, most recaptures occurred at or near the release site, with mosquitoes remaining within 20-50 meters of their emergence site when nearby hosts and oviposition sites were present. Moreover, only a minority of individuals were moving beyond 100 meters from release. Notably, the MRR studies mentioned above did not distinguish between male and female flying distances, and therefore, we can assume that $\alpha_M \approx \alpha_F \approx \alpha_{M_S}$ without loss of generality.  

Based on such evidence, one can employ the formula that links the random flying distance after $t$ days, $R(t)$, to the constant diffusion coefficient $\alpha$ in two dimensions \cite{Berg1993}:
\[ R(t)  \approx \sqrt{4 \ \alpha \ t} \]
Thus, setting $\alpha=0.01$ hectare/day yields 20 meters with $t=1$ (flight distance after 1 day), around 63 meters with $t=10$ (after 10 days), and around 100 meters with $t=25$ days (during the mean lifespan of mosquitoes).   The latter justifies the values for $\alpha_M, \alpha_F,$ and $\alpha_{M_S}$ used in all numerical experiments and provided in Table \ref{TabBliman}. 

\begin{table}[t]
\centering
\begin{tabular}{ll}
  \hline \\  [-0.5ex]
  \textbf{Description} & \textbf{Assumed value}   \\ [1.5ex]
  \hline \\ [-1.5ex]
Fecundity rate of wild adult female mosquitoes & $\rho = 4.55$    \\ [0.5ex]
Primary sex ratio in offspring & $r=1-r=0.5$      \\ [0.5ex]
Parameter related to larval competition {and carrying capacity} & $\sigma = \frac{0.05}{140} = \frac{1}{2800}$  \\ [0.5ex]
Mortality rate of wild adult male mosquitoes & $\mu_M=0.04$         \\ [0.5ex]
Mortality rate of wild adult female mosquitoes & $\mu_F=0.03$ \\ [0.5ex]
Mortality rate of sterile adult male mosquitoes & $\mu_S=0.04$     \\ [0.5ex]
Relative mating efficiency of sterile male mosquitoes & $\gamma = 1$   \\ [0.5ex]
Diffusion coefficient for wild male mosquitoes & $\alpha_M = 0.01$    \\ [0.5ex]
Diffusion coefficient for wild female mosquitoes & $\alpha_F = 0.01$   \\ [0.5ex]
Diffusion coefficient for sterile mosquitoes & $\alpha_{M_S} = 0.01$   \\ [0.5ex]
  \hline
\end{tabular}
\caption{{\textit{Aedes aegypti}} parameters values borrowed from \cite{Bliman2019, Bliman2023}} \label{TabBliman}
\end{table}

For the following numerical simulations, we use the FEniCS finite element library~\cite{FEniCS}. We consider an evolution with final time $T=500$ days on the unit square $\Omega=()\times(0,1)$ (scaled to 1 hectare). Note that the theoretical results shown in the previous sections can be generalized to the case of such convex polygonal domains. For spatial discretization, we divide the square into a uniform triangular grid of size $h=\nicefrac{1}{64}$ (unless specified otherwise). 
For temporal discretization, we use a uniform step-size $\delta t = \nicefrac{1}{2}$ and choose $\theta=1$ (unless specified otherwise).

\subsection{Homogeneous Releases of Sterile Insects}
\label{subsec-homo}

Due to the homogeneous Neumann boundary conditions, the problem allows for spatially constant solutions, if $\Lambda, M_0, F_0$ and $M_{S_0}$ are constant. In this case, the solutions should correspond to solutions of the ODE model~\eqref{ODESys}. We will use this case to validate our first test computations. In~\cite[Theorem 3]{Bliman2019} the authors found that there exists a {threshold quantity} $\Lambda^{crit}$, {expressing the critical size of constant daily releases of sterile males,} such that for $\Lambda<\Lambda^{crit}$ there exist two different equilibria: one, where $M=F=0$, and another one, where both $M$ and $F$ remain positive. For $\Lambda>\Lambda^{crit}$, on the other hand, there exists only the mosquito-free equilibrium $M=F=0$. For the parameters in Table \ref{TabBliman}, $\Lambda^{crit}$ can be computed {numerically from Equation (11) in~\cite{Bliman2019}} as $\Lambda^{crit}\approx 1\:292$ by using Newton's method.

In Figure~\ref{fig:equ}, we show results for $\Lambda \in \{0.9\Lambda^{crit}, 1.1\Lambda^{crit}\}$ and $M_0=F_0\in\{80,85\}$. $M_{S_0}$ is chosen to be zero in all cases. We note that the initial sizes of the wild mosquitoes are deliberately assumed to be relatively small compared to their equilibrium values ($M^{*}=5\:194, F^{*}=6\:925$ with $\Lambda=0$, see \cite{Bliman2019}) to showcase the bistability for $\Lambda < \Lambda^{crit}$, inherited by the PDE model \eqref{Sys} from the ODE model \eqref{ODESys}. 

\begin{figure}[t]

\begin{minipage}{0.45\textwidth}\centering
\includegraphics[width=\textwidth]{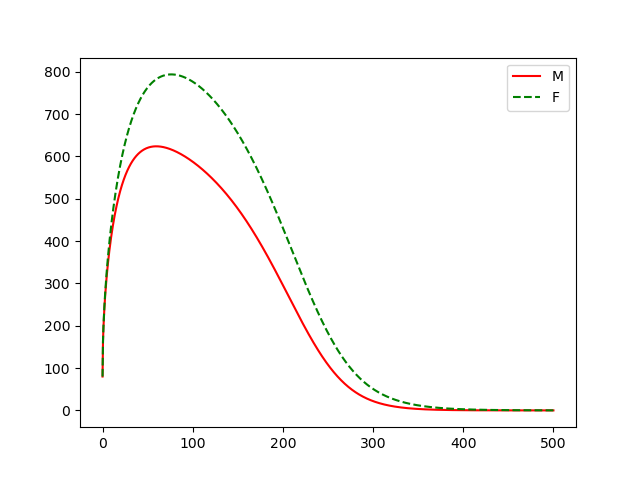}\\[-5.5cm]
{$\Lambda=0.9 \Lambda^{crit}, M_0=F_0=80$}
\vspace{5cm}
\end{minipage}\hfill 
\begin{minipage}{0.45\textwidth}\centering
\includegraphics[width=\textwidth]{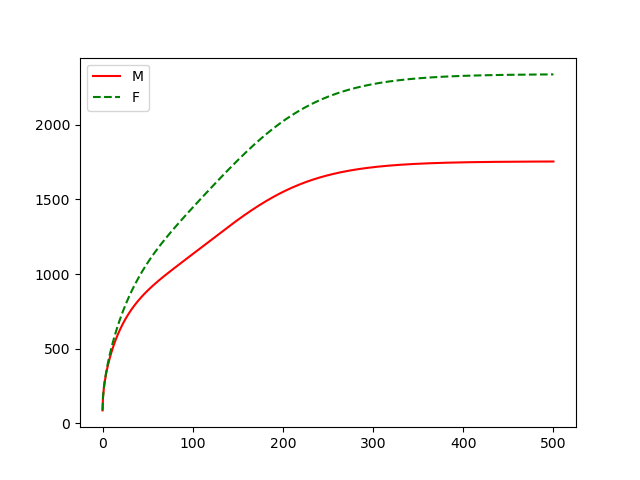}
\\[-5.5cm]
{$\Lambda=0.9 \Lambda^{crit}, M_0=F_0=85$}
\vspace{5cm}
\end{minipage}
\begin{minipage}{0.45\textwidth}\centering
\includegraphics[width=\textwidth]{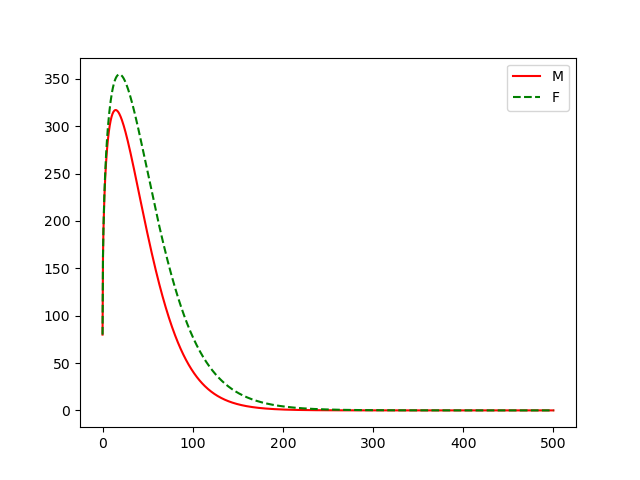}
\\[-5.5cm]
{$\Lambda=1.1 \Lambda^{crit}, M_0=F_0=80$}
\vspace{5cm}
\end{minipage}\hfill 
\begin{minipage}{0.45\textwidth}\centering
\includegraphics[width=\textwidth]{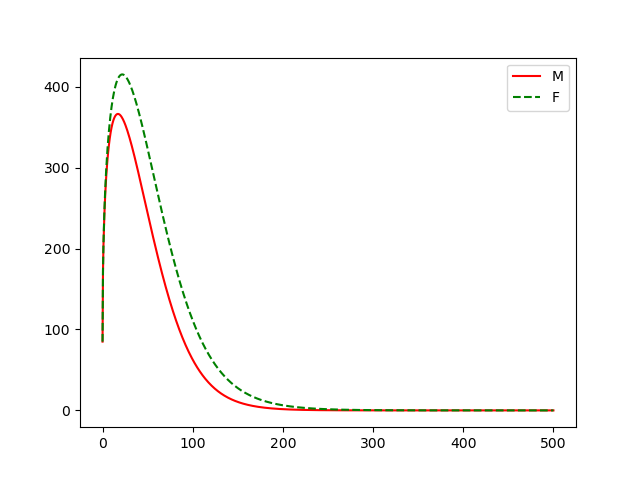}\\[-5.5cm]
{$\Lambda=1.1 \Lambda^{crit}, M_0=F_0=85$}
\vspace{5cm}
\end{minipage}
\caption{Evolution of $\big\|M \big\|_\Omega$ and $\big\|F \big\|_\Omega$ over time for constant releases $\Lambda =0.9\Lambda^{crit}$ (top) resp. $\Lambda =1.1\Lambda^{crit}$ (bottom) and $M_0=F_0=80$ (left) resp. $M_0=F_0=85$ (right). \label{fig:equ}}
\end{figure}

We observe that in all cases, {wild} mosquito populations grow fast in the beginning, when the population suppression induced by sterile males is not yet evident, because there is a notable delay between matings and the emergence (or non-emergence) of new adult individuals. The upper charts in Figure~\ref{fig:equ} illustrate the bistability for $\Lambda=0.9 \Lambda^{crit} < \Lambda^{crit}$. Namely, from smaller initial values ($M_0=F_0=80$, upper left chart), the system evolves toward a mosquito-free equilibrium, while from larger initial values ($M_0=F_0=85$, upper right chart), the system evolves toward a positive equilibrium whose coordinates are reduced compared to $M^{*}$ and $F^{*}$. In contrast, when $\Lambda=1.1 \Lambda^{crit} > \Lambda^{crit}$, the mosquito-free equilibrium is reached from both initial conditions (see two lower charts in Figure ~\ref{fig:equ}). These outcomes of the PDE system \eqref{Sys} align with the theoretical result ~\cite[Theorem 3]{Bliman2019} previously obtained for the ODE system \eqref{ODESys}.

Notably, the population of sterile mosquitoes $M_s$ is strictly increasing in all four cases displayed in Figure~\ref{fig:equ} due to their constant input, and reaches more than 29\:000 individuals at $t=500$ for $\Lambda =0.9\Lambda^{crit}$ and more than 35\:000 for $\Lambda =1.1\Lambda^{crit}$.  

\subsection{Nonhomogeneous Releases and Numerical Convergence Study}
\label{subsec.conv}

\begin{figure}[t]
\includegraphics[width=0.24\textwidth]{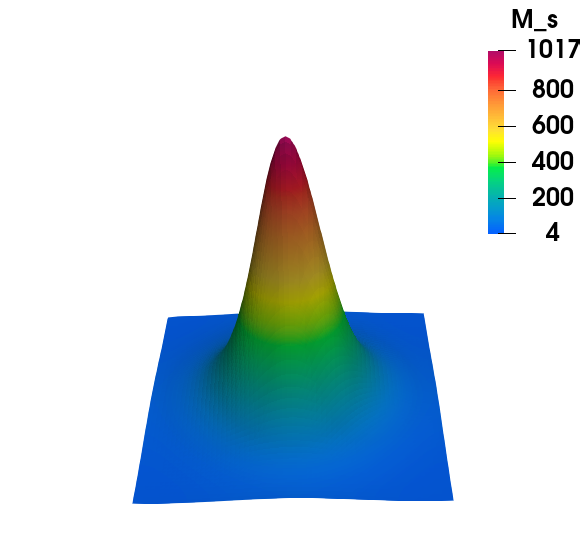}\hfil
\includegraphics[width=0.24\textwidth]{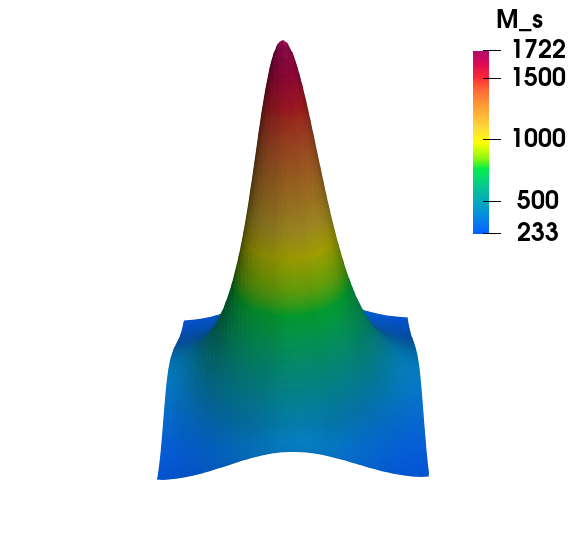}\hfil
\includegraphics[width=0.24\textwidth]{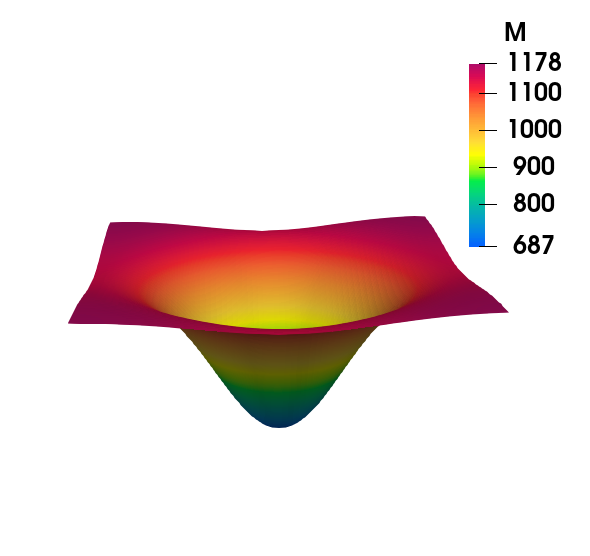}\hfil
\includegraphics[width=0.24\textwidth]{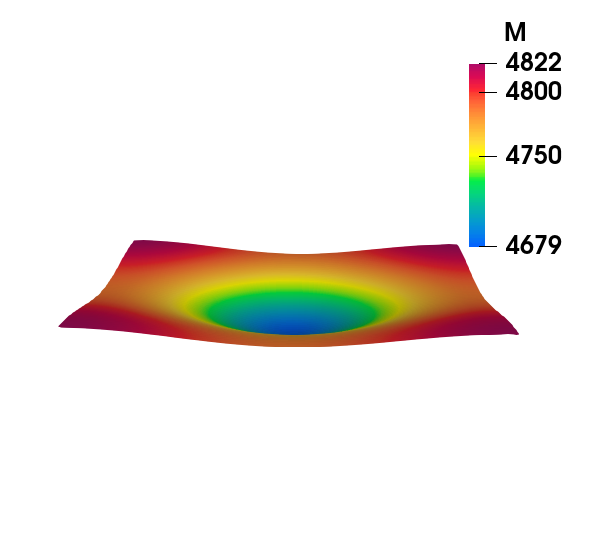}
\caption{\label{fig.res}\textit{Left}: Illustration of $M_S$ plotted over the domain $\Omega$ at times $t=2$ and $t=10$ (left to right), \textit{right}: Illustration of $M$ over $\Omega$ at times $t=2$ and $t=10$.}
\end{figure}
Next, we choose a space-dependent release. To this purpose, we define a Gaussian function centered in a point $p {=(p_1,p_2)}$ by
\begin{align}\label{defGp}
G_p(x,y) = \exp \big(-100 \big\|(x,y) - p \big\|_2^2 \big),  
\end{align}
and set with the center $m_p=(\nicefrac{1}{2}, \nicefrac{1}{2})$ of the domain $\Omega$
\begin{align}\label{LambdaG}
\Lambda(x,y) = 2000\, G_{m_p}(x,y).
    \end{align}
Notably, the $L^2$-norm of $\Lambda (x,y)$ defined by \eqref{LambdaG} is far less than $\Lambda^{crit}$. Therefore, the persistence of the wild population is expected, meaning that the time evolution of $M$ and $F$ should resemble the situation displayed in the upper right chart in Figure \ref{fig:equ}. Our intention is not only to showcase the impact of nonhomogeneous low-sized releases modeled by \eqref{LambdaG} on the spatial dynamics of the wild population, but also estimate the order of convergence of our algorithm for different values of $\delta t$ and $h$. 

The initial condition is fixed to  $(M_0,F_0,M_{S_0})=(100,100,0)$ to guarantee a high growth rate of the wild insect population at the beginning of the releases. Thus, we assume that the wild population is homogeneously distributed within the region $\Omega$ at $t=0$ and then wish to visualize its spatial distribution after sterile males are released nonhomogeneously.

Results of $M$  and $M_S$ at $t=2$ and $t=10$ are shown in Figure~\ref{fig.res}; the results for $F$ are similar to those of $M$.  In two days ($t=2$), the high density of sterile males in the center of $\Omega$ pushes out the newly emerged wild adults from the center of $\Omega$ towards its border, where the sterile insects are not yet present. However, in ten days ($t=10$), as more wild adults emerge from aquatic stages, the population of wild mosquitoes tends to recover its spatial homogeneity with only a slight reduction around the center of $\Omega$ due to the presence of sterile males. The results displayed in Figure~\ref{fig.res} bring forward two important outcomes captured by the PDE model \eqref{Sys}, namely: 
\begin{itemize}
\item 
Even small-sized, nonhomogeneous releases $\Lambda(x,y)$ induce spatial variability in the wild population, despite its initially homogeneous distribution.
\item 
Nonhomogeneous releases of the Gaussian form $\Lambda(x,y)$ with low overall sizes may only slightly reduce the equilibrium size of the wild mosquito population even if its initial density is relatively small. 
\end{itemize}
These insights will help us design more practical release strategies in the sequel (see Subsection~\ref{subsec-imp}).

To assess the order of convergence of the algorithm employed to obtain the results in Figure~\ref{fig.res}, we compare the $L^2$-norms of $M, F,$ and $M_S$ at $t=10$ for different values of the time step $\delta t$ and the spatial resolutions $h$.
In Table~\ref{Tabnx3} (left), we show the behavior of the $L^2(\Omega)$-norms of the three mosquito populations at $t=10$ days, where we fix the spatial resolution to $h=\frac{1}{64}$, but vary the time step $\delta t$.  The order of convergence $q$ is estimated by a least squares fit of the function $f(h)=f_0 + c \delta t^{q}$ for the three parameters $(f_0, c, q)$. As proven in Theorem~\ref{theo.conv}, we observe first-order convergence in all variables. In Table~\ref{Tabnx3} (right), we fix the time step to $\delta t=\frac{1}{80}$ and vary the spatial resolution $h$. We observe second-order convergence in all variables, which is optimal for $P_1$ finite elements.

\begin{table}[t]
\centering
\begin{tabular}{llll}
  \hline \\  [-0.5ex]
  % after \\: \hline or \cline{col1-col2} \cline{col3-col4} ...
  $\delta t$ & $\boldmath{\|M\|_{L^2}}$ & $\boldmath{\|F\|_{L^2}}$ & $\boldmath{\|M_{S}\|_{L^2}}$  \\ [1.5ex]
  \hline \\ [-1.5ex]
$\nicefrac{1}{10}$      &4784.36  &5062.73   &596.29 \\
$\nicefrac{1}{20}$     &4786.06  &5065.09  &596.72\\
$\nicefrac{1}{40}$    &4786.94  &5066.29   &596.94\\
$\nicefrac{1}{80}$  &4787.38  &5066.91  &597.05 \\ [0.5ex]
  \hline
  eoc & 0.97 &0.97 &0.98 \\ [0.5ex]
\end{tabular}\hfil
\begin{tabular}{llll}
  \hline \\  [-0.5ex]
  % after \\: \hline or \cline{col1-col2} \cline{col3-col4} ...
  $h$ & $\boldmath{\|M\|_{L^2}}$ & $\boldmath{\|F\|_{L^2}}$ & $\boldmath{\|M_{S}\|_{L^2}}$  \\ [1.5ex]
  \hline \\ [-1.5ex]
$\nicefrac{1}{16}$     &4786.66  &5066.04  &591.96\\
$\nicefrac{1}{32}$     &4787.22  &5066.72    &595.97\\
$\nicefrac{1}{64}$   &4787.38  &5066.91  &597.05\\
$\nicefrac{1}{128}$   &4787.42 &5066.95 &597.33\\ [0.5ex]
  \hline
  eoc & 1.84 &1.90 &1.90
\end{tabular}
\caption{$L^2$-norms of $M$, $F$, and $M_S$ at ${t}=10$ for different time step sizes $\delta t$ (left),  mesh sizes $h$ {(right),} and estimated order of convergence (eoc), assessed by a least-squares fit.} \label{Tabnx3}
\end{table}

\subsection{Time-stepping Schemes}

As we can see in Figure~\ref{fig.res} for $M$, the diffusion may have the effect that the solution tends towards a spatially constant solution for $t\to\infty$. It is known that the backward Euler scheme with $\theta=1$ (which is used for the linear terms in~\eqref{VFDP}) induces an additional numerical diffusion, {see Theorem~\ref{theo.conv} and} Remark~\ref{rem.diff}. In this section, we will investigate this effect by comparing results obtained for $\theta=1$ (where the implicit part is of backward-Euler type) and $\theta=0.5$ (Crank-Nicolson type), which is known to have perfect numerical dissipation properties. For further details, we refer to~\cite[Chapter 4.1.2]{RichterBuch}. 

We {now} set {more realistic but still spatially homogeneous initial conditions} $M_{S_0}=0$, $M_0=5\:000$ and $F_0=6\:700$, which {are} close to the equilibrium configuration in the absence of sterile mosquitoes, see~\cite{Bliman2019}. The release function $\Lambda$ is chosen as in the previous section, see~\eqref{LambdaG}, {to ensure persistence of the wild population.} We use two relatively large time-step sizes $\delta t =0.5$ and $\delta t=2$ (days) to make differences more visible. In Figure~\ref{fig.IEvsCN}, we plot the behaviour of $M$ over the diagonal line 
\begin{equation}
\label{G-line}
\Gamma_D := \Big\{(x,y)\in\Omega\, |\, x=y \Big\}, 
\end{equation}
{that runs} from the lower left to the upper right corner of the unit square {$\Omega$}, at times $t\in \{4,10,20,50\}$. At $t=4$, we observe significant differences between both schemes for the larger time-step size $\delta t=2$ (dashed lines). In particular, spatial differences in $M$ are much more pronounced for $\theta=0.5$. The {additional} numerical diffusion of the backward Euler method ($\theta=1$){, observed in Theorem~\ref{theo.conv},} leads to a much smoother curve. For later times $t$, however, the differences get smaller and smaller. Considering $\delta t=0.5$, we observe a similar, but much less pronounced effect at $t=4$. From $t\geq 10$ differences between the two schemes are barely visible. We conclude that the backward Euler-type scheme ($\theta=1$) can be used for the linear terms, if $\delta t\leq 0.5$.  

\begin{figure}[t]
\includegraphics[width=0.5\textwidth]{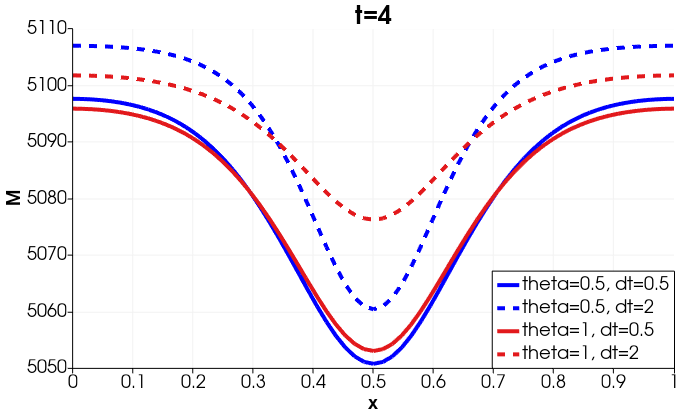}\hfil 
\includegraphics[width=0.5\textwidth]{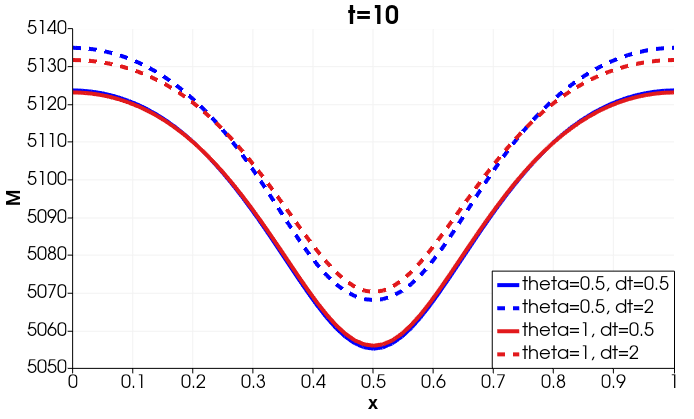}\\
\includegraphics[width=0.5\textwidth]{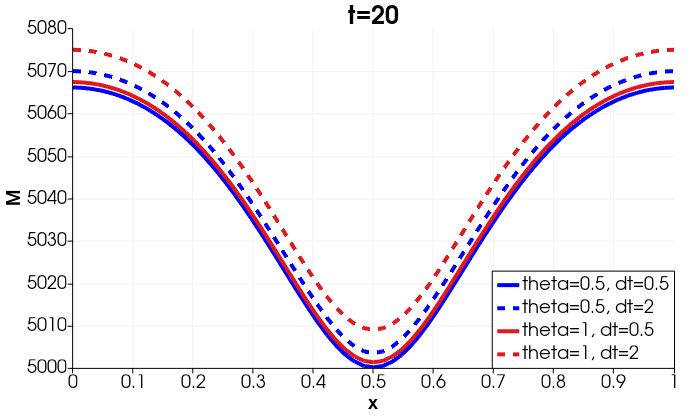}\hfil 
\includegraphics[width=0.5\textwidth]{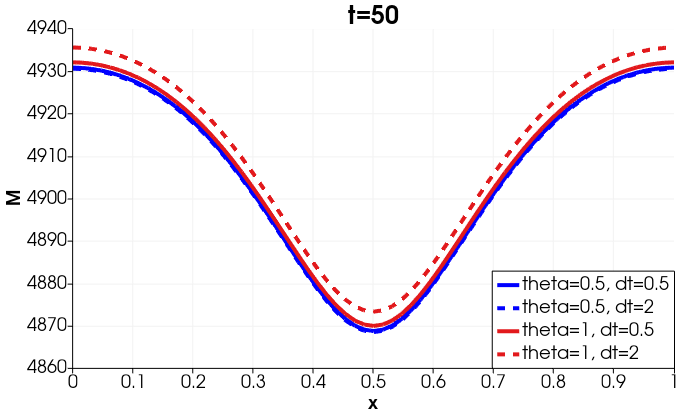}
\caption{\label{fig.IEvsCN}Plot over the diagonal line $\Gamma_D$ of $M$ at times $t\in \{4,10,20,50\}$ for different time-stepping schemes ($\theta$) and time-step sizes $\delta t$.} 
\end{figure}

In the following, we will therefore use $\theta=1$ and $\delta t =0.5$ in all simulations.

\begin{figure}[t]
\begin{tabular}{ccc}
{(a)} & {(b)} & {(c)} \\
\includegraphics[width=0.32\textwidth]{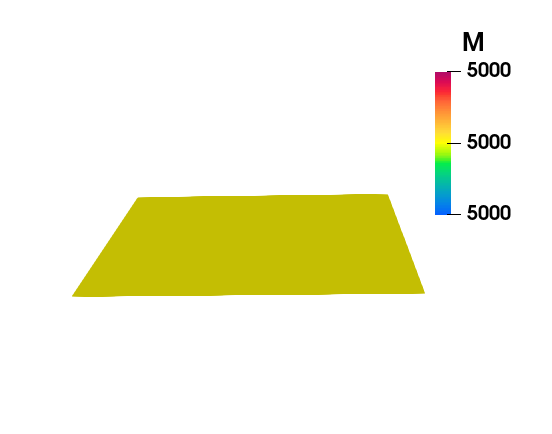}  %\hfil
& \includegraphics[width=0.32\textwidth]{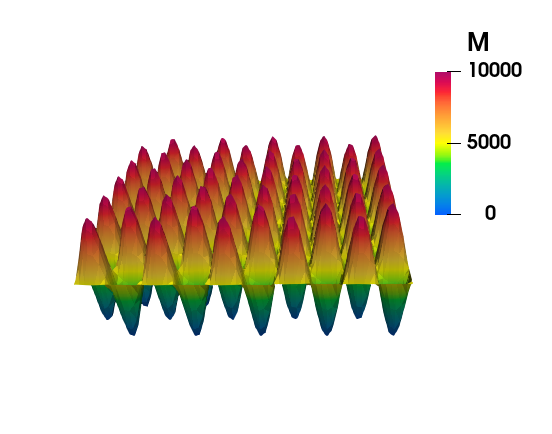} % \hfil
& \includegraphics[width=0.32\textwidth]{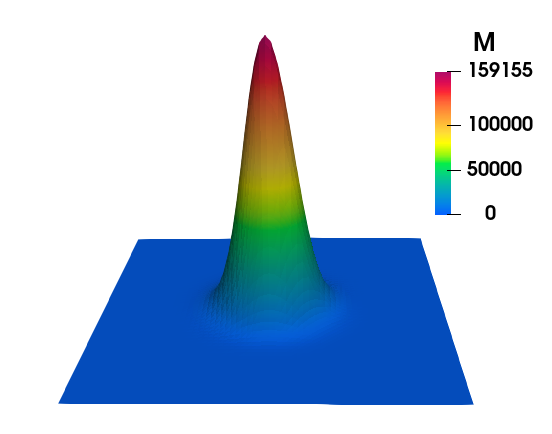}
\end{tabular}
\caption{\label{fig.initial}Visualization of the three initial conditions:  (a) uniform, (b) sinusoidal, (c) Gaussian.}
\end{figure}

\begin{figure}[t]
\centering
\includegraphics[width=0.5\textwidth]{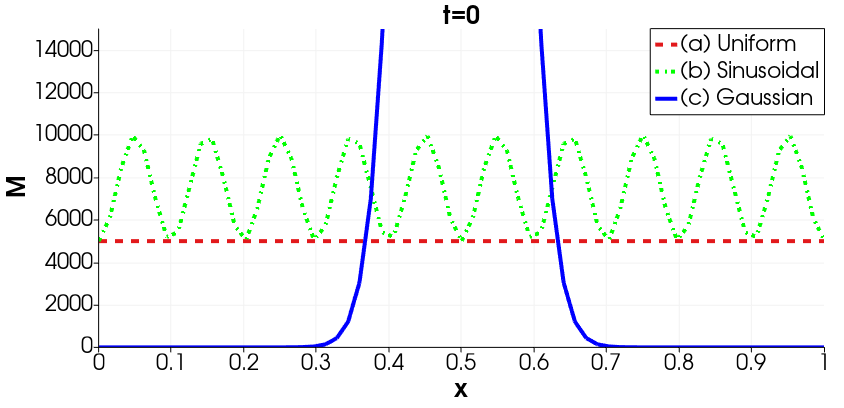}\hfil 
\includegraphics[width=0.5\textwidth]{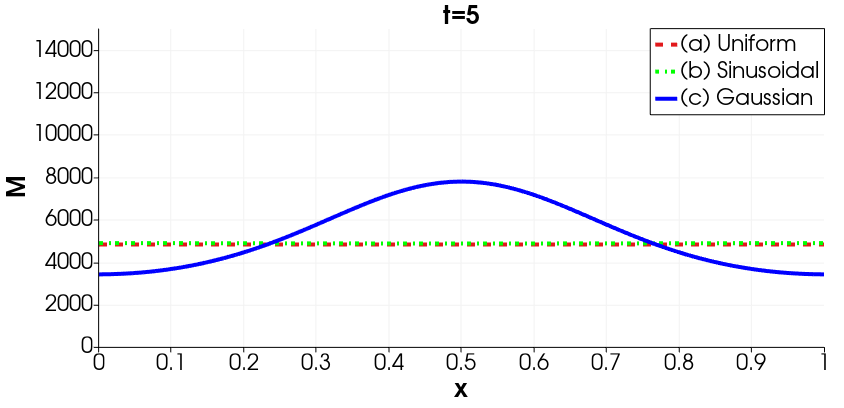}\hfil
\includegraphics[width=0.5\textwidth]{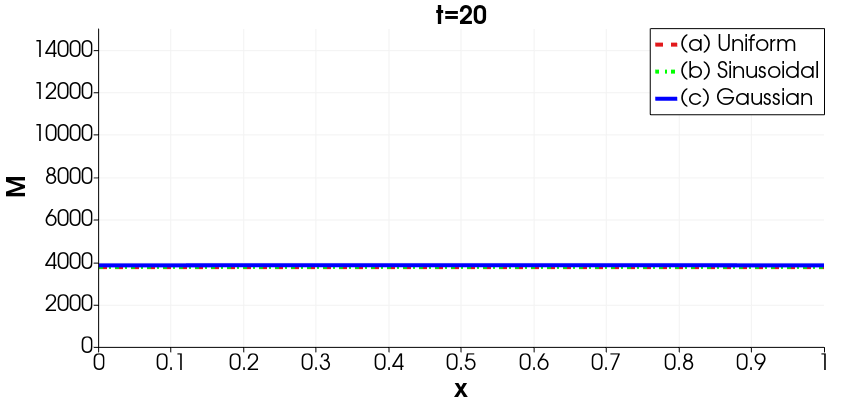}
\caption{\label{fig.initialstudy}Plot of $M$ over the diagonal line $\Gamma_D$ at times $t=0$, $t=5$ and $t=20$ (left to right) for different initial conditions $M_{0}$.}
    
\end{figure}

\subsection{Influence of Initial Conditions}
\label{sec-icon}

In Section \ref{subsec.conv}, it was revealed that spatially nonhomogeneous releases $\Lambda(x,y)$ may temporarily disturb the homogeneity of the initial conditions (see charts in Figure \ref{fig.res}). Still, they will eventually drive the wild populations to a spatially homogeneous equilibrium. In the next series of tests, we illustrate the inverse situation by visualizing whether spatially homogeneous (constant) releases $\Lambda < \Lambda^{crit}$ can bring an initially nonhomogeneous wild population to a positive equilibrium. 

We set $M_{S_0}=0$, $\Lambda=0.9\cdot \Lambda^{crit}$ and study the effect of different initial conditions for the wild mosquitoes $M_0(x,y)=5\,000\, g_{{i}}(x,y)$ and $F_0(x,y)=6\,700\, g_{{i}}(x,y), {i=1,2,3}$:

\begin{itemize}
\item[(a)] Uniform: $g_1(x,y)=1$
\item[(b)] Sinusoidal: $g_2(x,y)=1+\sin(10\pi x)\sin(10\pi y)$
\item[(c)] Gaussian: $g_3(x,y)=\frac{100}{\pi} G_{m_p(x,y)}$, where $m_p=(\nicefrac{1}{2}, \nicefrac{1}{2})$ and $G_p(\cdot)$ are defined in~\eqref{defGp} for $p\in\Omega$.
\end{itemize}

In all cases, the  functions {$g_i(x,y)$} are scaled, that is, 
\[ \int \limits_\Omega g_i(x,y)\, \text{d}x\text{d}y = 1, \quad  i =1,2,3. \] 
Thus, in all three cases, the initial densities of male and female mosquitoes are, on average, relatively close to their equilibrium values. Since $\Lambda$ is relatively large but still less than the critical threshold $\Lambda^{crit}$, we expect that the wild population, while being persistent, will eventually fall below its initial density as $t$ increases. 

The initial conditions are visualized in Figure~\ref{fig.initial}. Case (b) (sinusoidal) can be seen as a moderate perturbation of the constant initial data, while case (c) (Gaussian) represents a large perturbation with a high peak in the center. Intuitively, and based on the numerical tests performed in Section \ref{subsec-homo},  the PDE model \eqref{Sys} in the case (a) (constant spatially homogeneous initial conditions expressed by $g_1(x,y)=1$) is likely to behave similarly to the ODE model \eqref{ODESys} under constant spatially homogeneous releases $\Lambda < \Lambda^{crit}$. However, the wild populations $M$ and $F$ are expected to decrease in time, contrary to the plot displayed in the upper right chart of Figure \ref{fig:equ}, because the initial values of $M$ and $F$ are close to their natural equilibrium. Thus, case (a) will serve as the baseline for comparing the outcomes of the PDE model \eqref{Sys} initiated by the spatially nonhomogeneous initial conditions presented in charts (b) and (c) of Figure~\ref{fig.initial}. In the sequel, we present the results only for $M$, since those for $F$ are very similar.

First, we plot the initial density of male mosquitoes $M(x,y)$ over the diagonal line $\Gamma_D$, defined by \eqref{G-line}, for all three cases (a), (b), and (c) given in Figure~\ref{fig.initial}. Actually, {the plot over} $\Gamma_D$ can be visualized as an intersection curve between each of the surfaces displayed in Figure~\ref{fig.initial} and the vertical diagonal plane $x=y$. The resulting curves are displayed on the upper left chart of Figure~\ref{fig.initialstudy} by red dotted, green dashed, and blue solid lines for the cases (a), (b), and (c), respectively.  Notably, a closer look at the green dashed curve on this chart (sinusoidal case (b)) reveals that along the line $\Gamma_D$, the males exhibit the highest density at initial time $t=0$ compared to the other two cases (a) and (c).

However, at time $t=5$ (see the upper right chart in Figure~\ref{fig.initialstudy}), the density of male mosquitoes along the line $\Gamma_D$ in case (b) becomes almost identical to that of case (a). From a biological standpoint, this outcome of the PDE model \eqref{Sys} shows the effect of spatial diffusion of wild mosquitoes under constant homogeneous releases of sterile competitors, as the rivalry for mating opportunities drives them to seek an even distribution and become well-mixed with the sterile insects. 

A similar tendency to well-mixing is also observed in the Gaussian case (c). In fact, comparing the two upper charts in Figure~\ref{fig.initialstudy}, the blue solid curve shows a noticeable flattening and peak reduction between $t=0$ (left chart) and $t=5$ (right chart). However, the overall density of males along the line $\Gamma_D$ does not exhibit a notable reduction between $t=0$ and $t=5$, i.e., during the first five days since the beginning of the constant uniform releases of sterile males. Nonetheless, this outcome of the PDE model \eqref{Sys} agrees with the short-term population dynamics of mosquitoes since there is a delay between the appearance of sterile males in the system and the reduction in the quantity of newly emerged adult insects. The lower chart in Figure~\ref{fig.initialstudy} makes the role of sterile males more visible at $t=20$, where the overall population of wild males $M$ along the line $\Gamma_D$  not only becomes evenly distributed but also lower compared to its initial value in all three cases (a), (b), and (c).

From the above tests, we can conclude that the PDE model \eqref{Sys} adequately responds to uniform, constant releases of sterile males under any initial conditions, whether uneven or uniform. Even if the wild population before the releases is spatially nonhomogeneous, the uniform, constant releases of sterile males will eventually make the wild population homogeneously mixed within the area $\Omega$. The presence of spatial diffusion in the PDE model facilitates the mimicking of wild mosquito movements from spots with higher population density to spots with lower population density, thereby achieving a homogeneous distribution in response to a constant homogeneous inflow of sterile insects. 

From our numerical computations, we conclude that, due to spatial diffusion, the long-time behaviour of the PDE model \eqref{Sys} is not very sensitive to the initial data, at least if the control $\Lambda$ is constant and spatially uniform. In the next section, we will study the effect of space-dependent releases on wild mosquito populations with initially nonuniform spatial distribution.

\subsection{Location of the Control}
\label{subsec.loc}

\begin{figure}[t]
\centering
\begin{minipage}{0.48\textwidth}
\includegraphics[width=\textwidth]{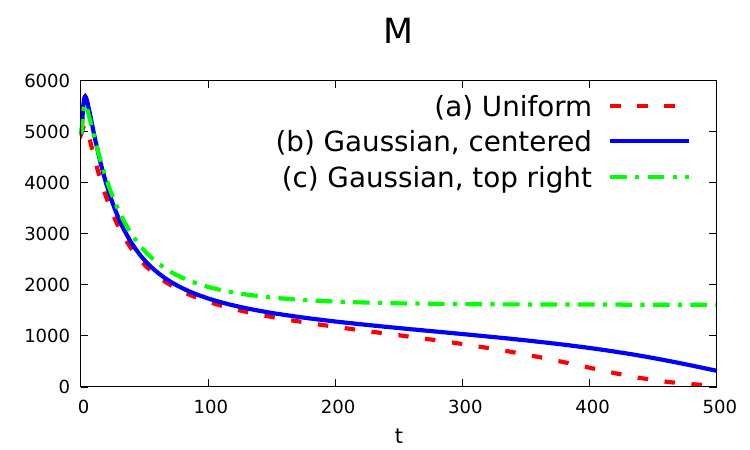}
\end{minipage}
\hfil 
\begin{minipage}{0.48\textwidth}
\includegraphics[width=\textwidth]{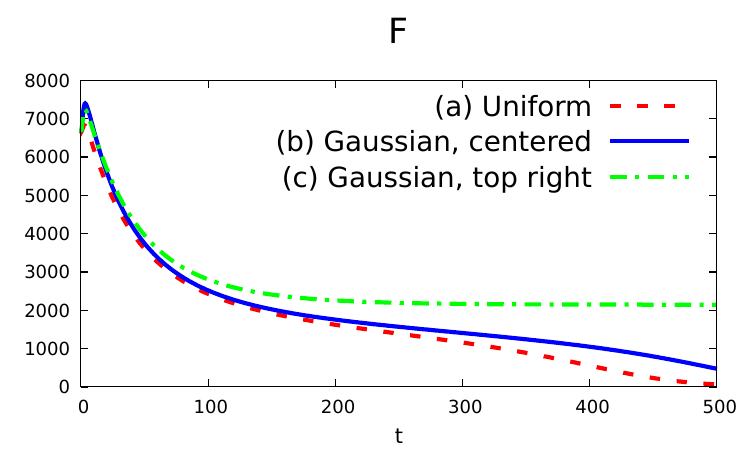}
\end{minipage}
\hfil 
\begin{minipage}{0.48\textwidth}
\includegraphics[width=\textwidth]{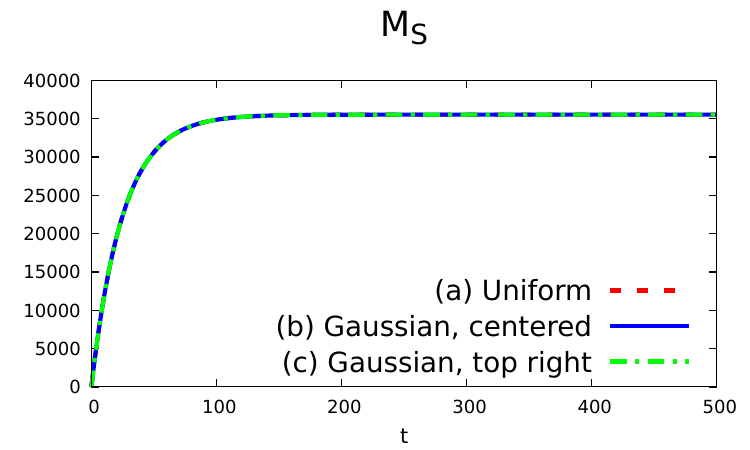}
\end{minipage}
\caption{\label{fig.cont_int} Behaviour of the total mosquito populations $\int \limits_\Omega M (x,y) \text{d}x\text{d}y$ (top left), \, $\int \limits_\Omega F(x,y) \text{d}x\text{d}y$ (top right), \, and $\int \limits_\Omega M_S(x,y) \text{d}x\text{d}y$ (bottom) over time $t$ (in days) for continuous releases. }    
\end{figure}

Next, we study the influence of the location of sterile mosquitoes. The initial data is chosen by means of the Gaussians
\begin{equation} \label{M0F0}
    M_0(x,y)=5\:000 \cdot \frac{100}{\pi} G_{m_p}(x,y), \qquad F_0=6\:700 \cdot \frac{100}{\pi} G_{m_p}(x,y), \quad \qquad M_{S_0}=0, 
\end{equation}
with $G_p(\cdot)$ defined in~\eqref{defGp} and $m_p=(\nicefrac{1}{2}, \nicefrac{1}{2})$.
This means that wild mosquitoes are initially concentrated around the midpoint of $\Omega$ with total densities close to equilibrium values.
We compare three controls $\Lambda{(x,y)}$, {which} are constant in time but differ in space:
\begin{itemize}
\item[(a)] Uniformly distributed: $\Lambda_a(x,y)=1.1\cdot\Lambda^{crit}$
\item[(b)] Gaussian around the center: $\Lambda_b(x,y)=\frac{100}{\pi} 1.1 \Lambda^{crit} G_{m_p}(x,y)$
\item[(c)] Gaussian centered on the top right: $\Lambda_c(x,y)=\frac{1}{0.0314031} 1.1 \Lambda^{crit} \cdot G_{p_{tr}}(x,y)$,
\end{itemize}
where $p_{tr}=(\nicefrac{3}{4}, \nicefrac{3}{4})$. All functions are scaled such that 
\[ \int \limits_\Omega \Lambda(x,y)\,\text{d}x\text{d}y = 1.1 \Lambda^{crit}, \] 
meaning that the daily release of sterile males exceeds the critical threshold value $\Lambda^{crit}$. Thus, with $\Lambda=\Lambda_a$ (spatially homogeneous releases), we can expect that the wild population will eventually be driven to extinction when $t$ becomes sufficiently large, see the results presented in Sections \ref{subsec-homo} and \ref{sec-icon}. To check this hypothesis, and also to visualize the impact of spatially nonhomogeneous releases $\Lambda_b (x,y)$ and $\Lambda_c (x,y)$ on the time evolution of populations $M, F$, and $M_S$, we present their long-term (500 days) plots in Figure~\ref{fig.cont_int}.

As displayed in the two upper charts of Figure~\ref{fig.cont_int}, the wild population will eventually become extinct under constant uniformly distributed releases $\Lambda_a(x,y) > \Lambda^{crit}$, which confirms our hypothesis. This, together with case {(a)} studied in Section~\ref{sec-icon} for constant homogeneously distributed releases $\Lambda < \Lambda^{crit}$, indicates that the PDE model \eqref{Sys} with constant uniform releases $\Lambda$ exhibits the same behavior as the ODE model \eqref{ODESys} even if the initial densities of the wild mosquitoes are non-uniform. 

The control $\Lambda_b(x,y)$, placed precisely over the wild population concentrated around the mid-point of $\Omega$ at time $t=0$, also ensures a steady decline of both $M$ and $F$ (see blue solid curves on the two upper charts in Figure~\ref{fig.cont_int}). However, $\Lambda_b(x,y)$ may apparently need a lot more time to eliminate the wild insects. 
Notably, by placing the control $\Lambda_c(x,y)$ farther from the mid-point of $\Omega$, around which the wild population is concentrated at $t=0$, we observe persistence of both $M$ and $F$ (see green dash-dotted curves on the two upper charts in Figure~\ref{fig.cont_int}) even though at considerably lower densities than initially.  

On the lower chart of Figure~\ref{fig.cont_int}, we observe that the total {size of the} population $M_S$ is equal {at each $t\geq 0$} in all cases, while their spatial distribution at each $t$ will depend on the control source functions (a)-(c). 

Thus, considering the time evolution of the wild mosquitoes $M$ and $F$ from their initial densities \eqref{M0F0} under three types of constant in size but spatially changeable release source functions (a)-(c), one may conclude from the results given in Figure~\ref{fig.cont_int} that elimination of the wild insects is
\begin{itemize} 
\item[-] 
achievable by equally distributed releases $\Lambda_a(x,y)$;
\item[-] 
possibly achievable in a longer time by the Gaussian-type centered releases $\Lambda_b(x,y)$, and
\item[-] 
unachievable under the biased Gaussian-type releases $\Lambda_c(x,y)$.
\end{itemize}

\begin{figure}[t]
\includegraphics[width=0.48\textwidth]{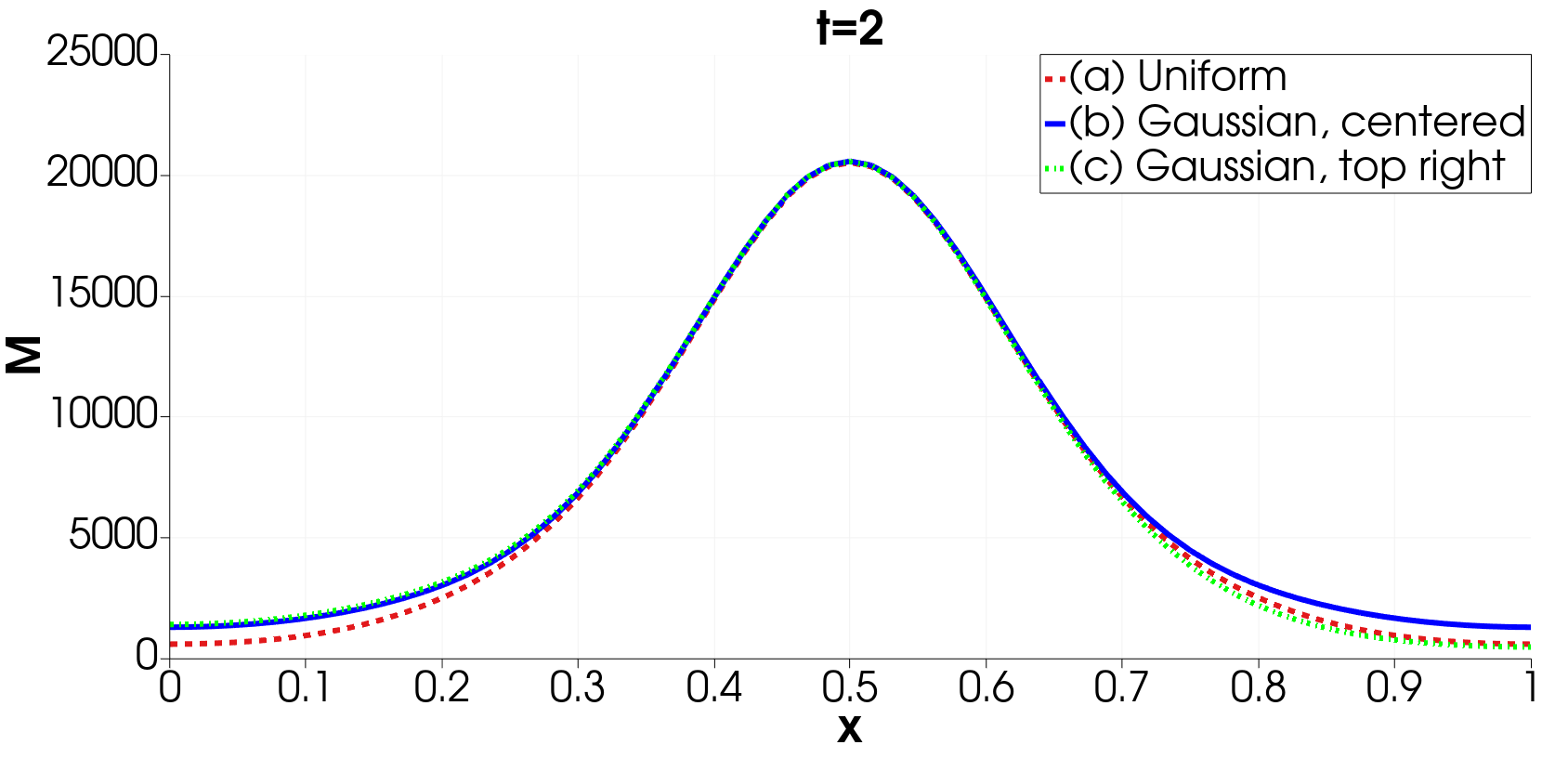}\hfil 
\includegraphics[width=0.48\textwidth]{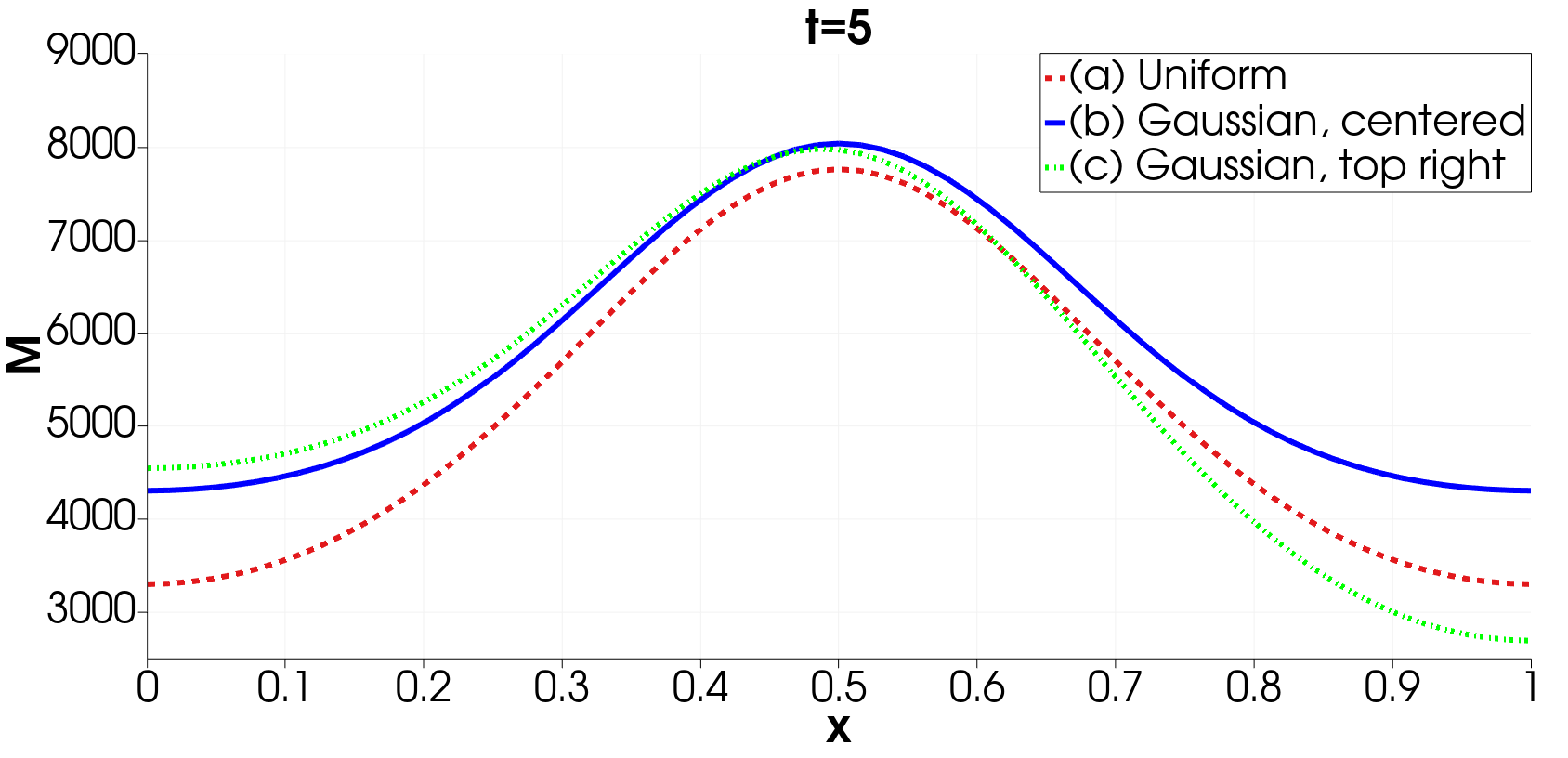}\\
\includegraphics[width=0.48\textwidth]{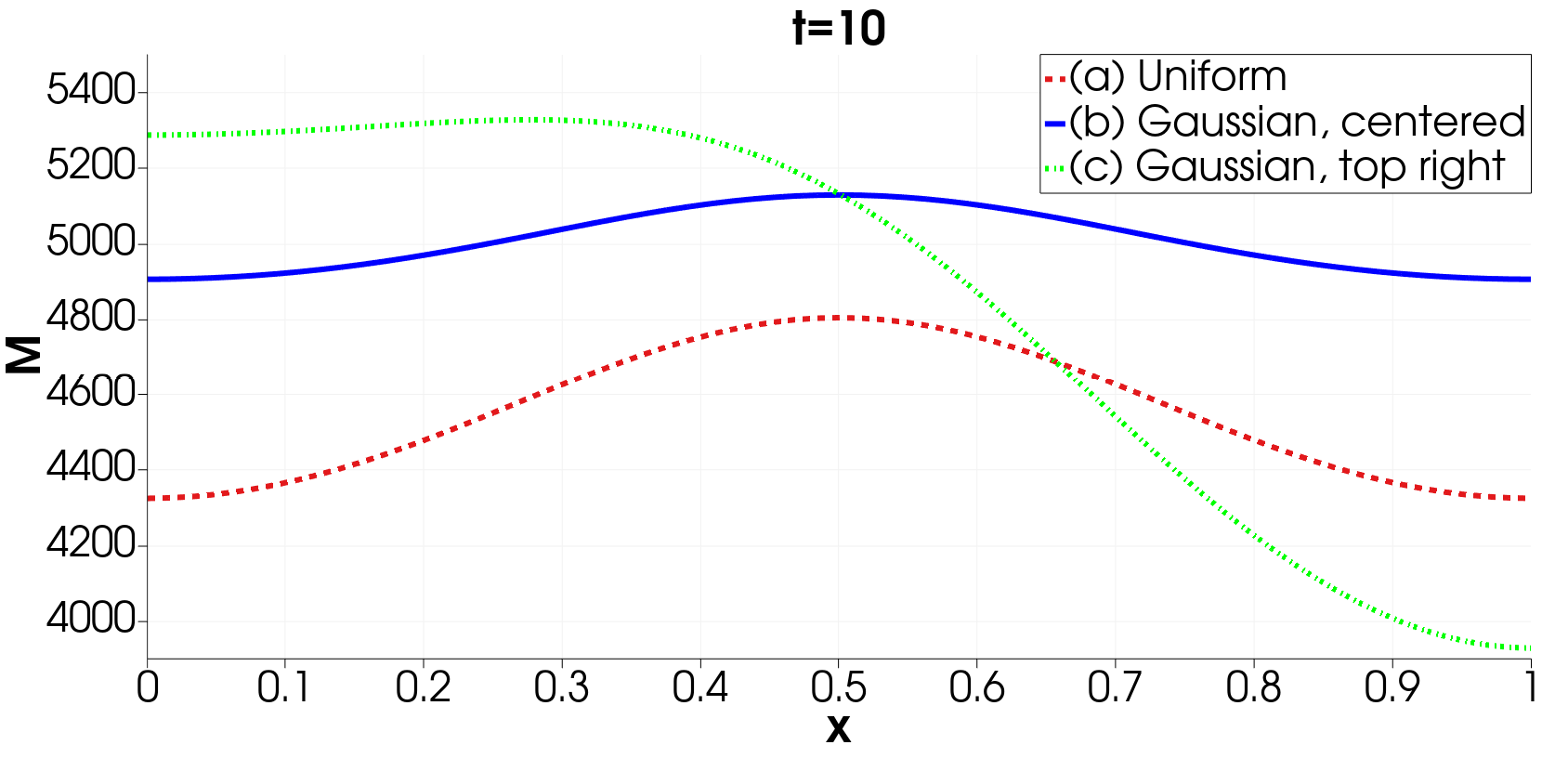}\hfil
\includegraphics[width=0.48\textwidth]
{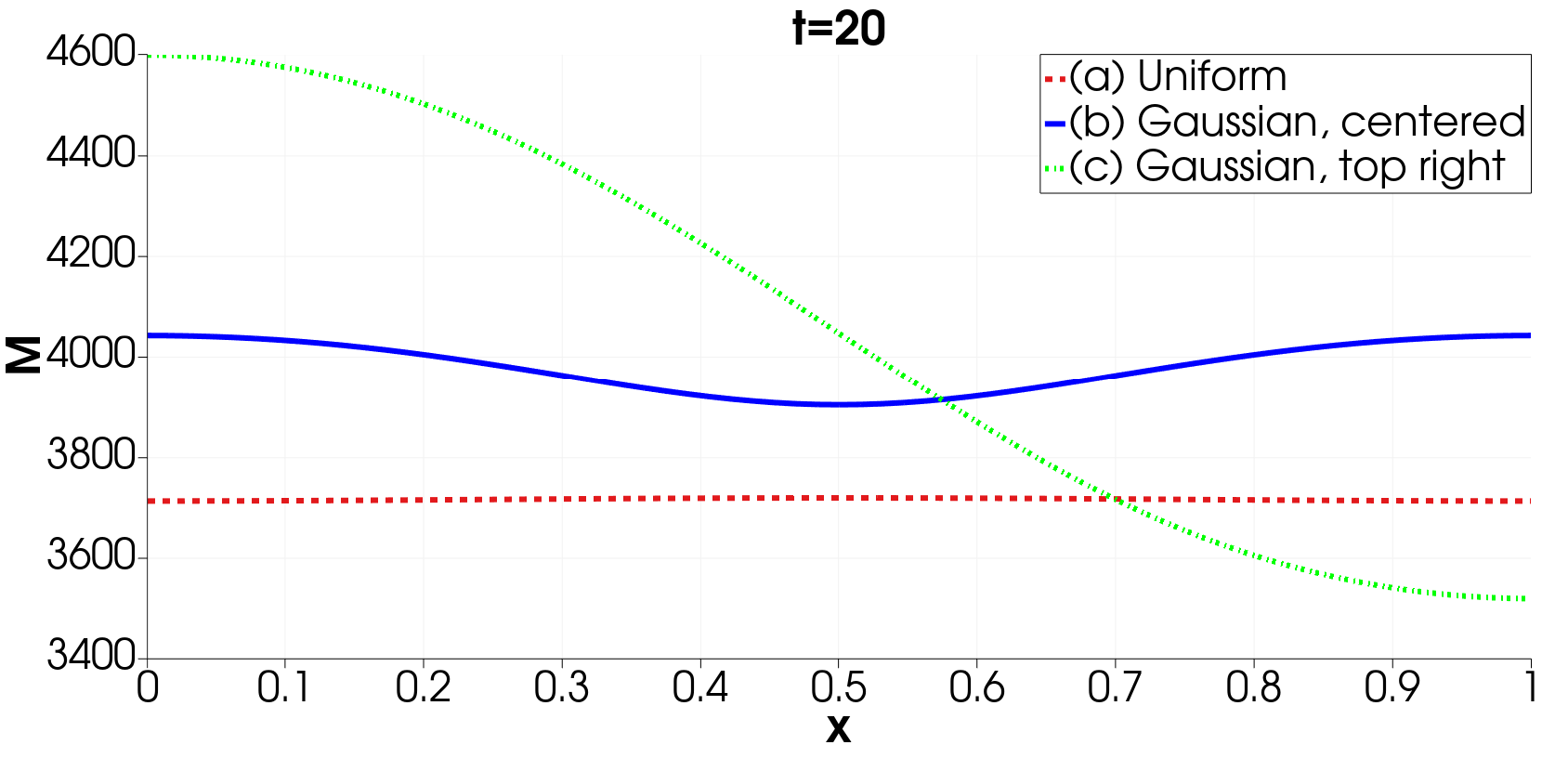}\\
\includegraphics[width=0.48\textwidth]{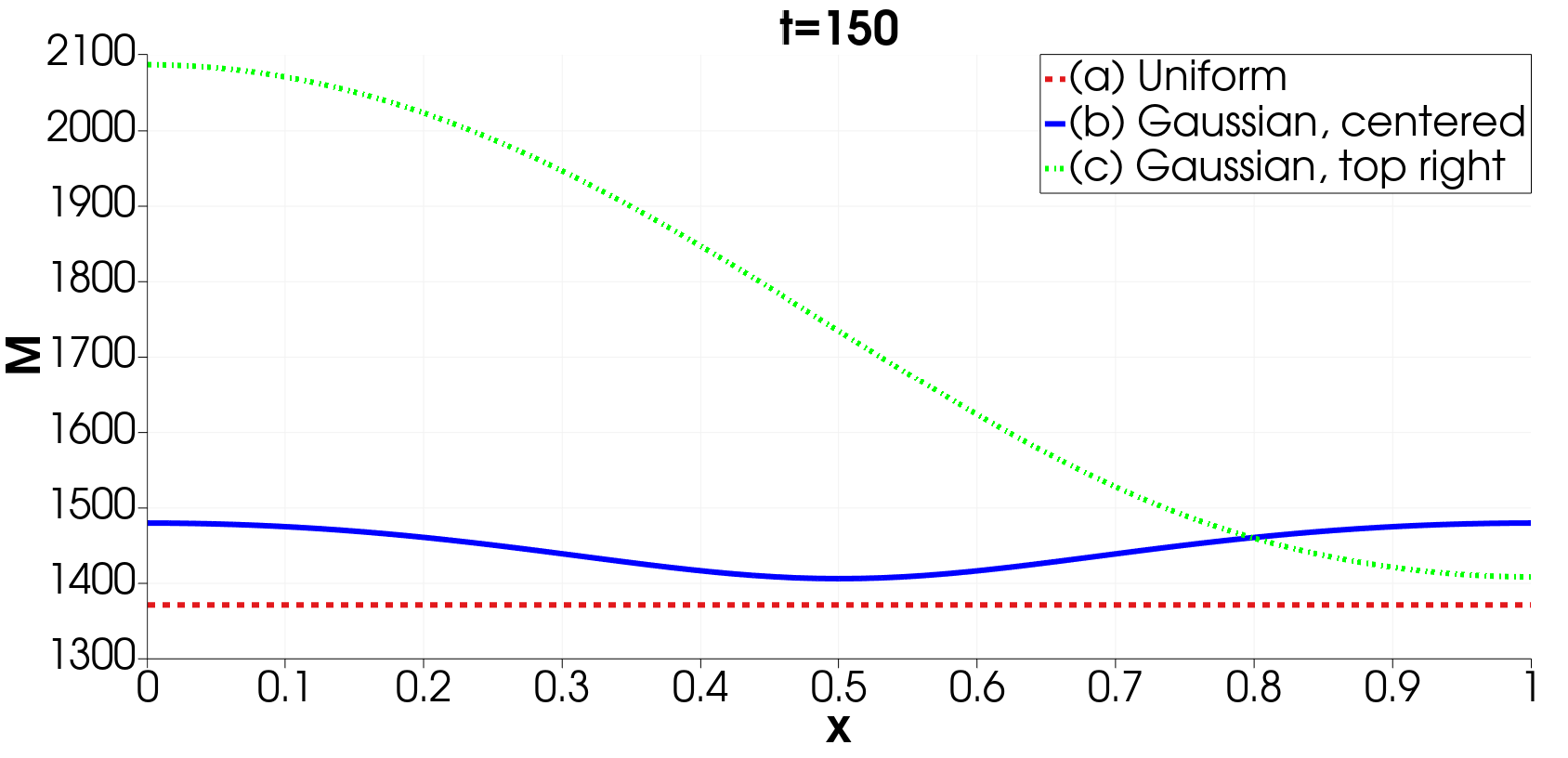}\hfil 
\includegraphics[width=0.48\textwidth]{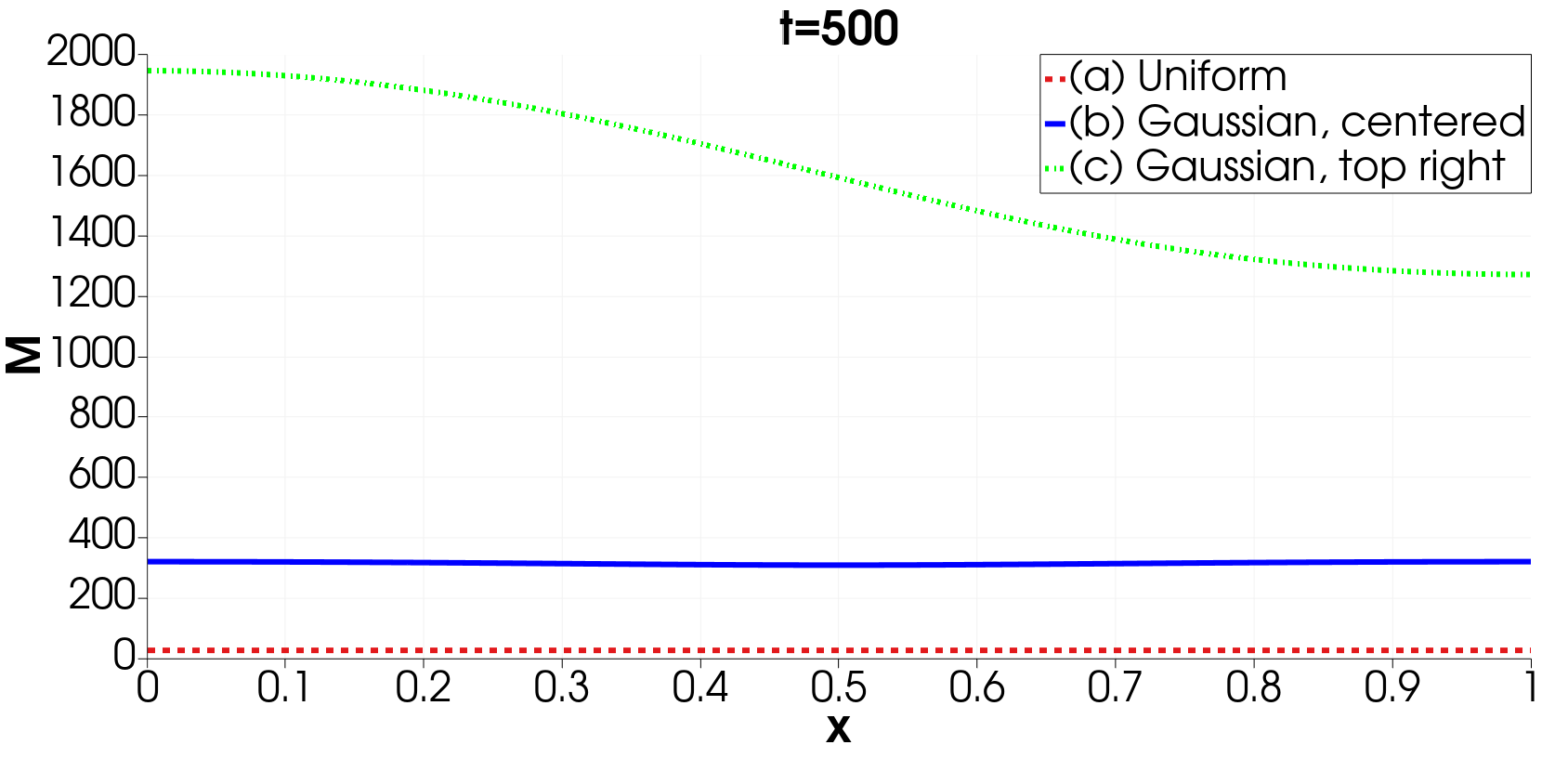} 
\caption{\label{fig.location} Plot of $M$ over the diagonal line $\Gamma_D$ for different spatial distributions of the (continuously released) control $\Lambda$ at times $t=2$ (top left), $t=5$ (top right), $t=10$ (mid left), $t=20$ (mid right), $t=150$ (bottom left) and $t=500$ (bottom right).}
\end{figure}

To gain further insight into the spatio-temporal dynamics, we plot in Figure~\ref{fig.location} the population of wild males $M$  over the diagonal line $\Gamma_D$, defined previously by \eqref{G-line}, under the three release types (a)-(c) at different time instants. 
We see that already at time $t=5$, the equally distributed control strategy (a) is more successful, even in reducing the initial peak of wild mosquitoes in the center, compared to strategies (b) and (c). Moreover, all charts in Figure~\ref{fig.location} attest that homogeneously distributed releases $\Lambda_a(x,y)$ are capable of maintaining the population of wild males $M$ at a lower level compared to nonhomogeneous releases $\Lambda_b(x,y)$ and $\Lambda_c(x,y)$ for all times, till its (almost) extinction at $t=500$ (cf. bottom right chart in Figure ~\ref{fig.location}). In this sense, the elimination of wild insects seems achievable through equally distributed releases $\Lambda_a(x, y)$ in 500 or more days. 

It is also helpful to contrast the right-hand middle chart in Figure~\ref{fig.location} ($t=20$) with the bottom chart in Figure~\ref{fig.initialstudy} from Section~\ref{sec-icon}. Both charts exhibit the distribution of the population $M$ along the diagonal line $\Gamma_D$ at $t=20$, starting from the same initial conditions \eqref{M0F0}, in response to constant spatially homogeneous daily releases $\Lambda$. However, in the former, the red dashed line depicts the distribution of the $M$-population under  $\Lambda > \Lambda^{crit}$, while in the latter, the blue solid line depicts the distribution of the $M$-population under  $\Lambda < \Lambda^{crit}$. As expected, a larger daily release size brings the population $M$ to a lower level (around 3\:700 individuals) than the smaller release size (around 4\:000 individuals), while in both cases and during the same time $t=20$ days, the population $M$ reaches spatially even distribution over the line $\Gamma_D$. This illustrates that wild mosquitoes disperse throughout the domain $\Omega$ and mix with sterile insects to balance their density in response to spatially uniform releases, regardless of the daily size of such releases.

When the control source $\Lambda_b(x, y)$ is placed precisely over the wild population, which is concentrated around the midpoint of $\Omega$, all insects disperse slowly over the region  $\Omega$ due to simple diffusion. However, as time increases, the areas located farther from the source of constant inflow of sterile males start producing a larger number of new wild adults compared to the area around the source (see blue solid curves in Figure~\ref{fig.location}). {In fact, at $t=20$ and $t=150$ the concentration of $M$ attains now a local minimum in the center, where initially the maximum of $M$ has been. This explains why the uniform strategy (a) is more successful in eliminating wild mosquitoes compared to case (b), where sterile mosquitoes are released at all times at the place where the local maximum has been in the beginning. Finally, in case (b)}
a rather low density {is reached} in a homogeneous distribution {at $t=500$}. This outcome agrees with the results of numerical tests given in Section~\ref{subsec.conv}, and further computation reveals that the $M$-population (as well as $F$-population) will slowly decline towards extinction. Thus, the elimination of wild insects can possibly be achieved by releasing sterile insects exactly over the concentration area of wild mosquitoes (that is, using $\Lambda_b(x, y)$), but in more than 500 days.

Finally, if the control source $\Lambda_c(x, y)$ is placed around the point $p_{tr}=(\nicefrac{3}{4},\nicefrac{3}{4})$ in $\Omega$, i.e., farther from the initial mass of the wild mosquitoes, all insects will gradually disperse over $\Omega$ due to simple diffusion, showing a pronensity to well-mixing. However, as time passes, the area farther from the influx of sterile males will produce considerably more new wild adults than the area around $p_{tr}$. Notably,  such a tendency will be maintained for a significant time (see green dash-dotted curves in all charts of Figure~\ref{fig.location}). 

A longer computation reveals that strategy (c) fails to {completely} eliminate the wild mosquitoes and converges to a {positive} equilibrium 
\[ \int \limits_\Omega M(x,y) \text{d}x\text{d}y \approx 1\:609 \quad \text{and} \quad \int \limits_\Omega F(x,y) \text{d}x\text{d}y \approx 2\:146. \]

Thus, we conclude that, based on the prediction of our PDE model \eqref{Sys}, choosing an appropriate release location {is} important in practice, as it determines whether wild mosquito elimination will eventually be achieved.

\subsection{Impulsive Control}
\label{subsec-imp}

In the preceding subsections, all our numerical tests simulated continuous-time releases of sterile males at a constant rate with or without spatial variability, which are commonly assumed in SIT-type models as a time-averaged representation of frequent release operations. This assumption facilitates analytical tractability and allows visualization of the model's results with respect to the explicit elimination threshold $\Lambda^{crit}$ derived for the ODE model \eqref{ODESys}. However, such releases are difficult to implement in practice because field deployments are inherently discrete, periodic in time (or irregular), and spatially constrained. Consequently, SIT-type models that feature only continuous releases should be interpreted as an idealized benchmark rather than an operational program, as they neglect the impulsive nature of field operations and may overestimate the effectiveness of SIT under realistic deployment constraints.

To bridge the gap between idealized continuous inputs and operational SIT protocols, we finally investigate the case in which large cohorts of newly emerged sterile male coevals are released all at once every 20 days. Therefore, we set $\Lambda=0$, but increase the sterile mosquito population every 20 days by 
\begin{align*}
M_S(t^+) = M_S(t^-) + \Lambda_{per}, \quad \text{if } \,t =20 k, \ k\in\mathbb{N}
\end{align*}
where $t^-$ ( resp. $t^+$ ) denotes the time at an infinitesimal instance before $t$ (resp. after $t$). Moreover, we set $M_{S_0}=\Lambda_{per}$, which means that the first release takes place at time $t=0$. The initial values for wild mosquitoes, $ M_0$ and $ F_0$,  are set as Gaussians centered at the midpoint
\begin{align}
M_0&=5\,000\cdot\frac{100}{\pi} G_{m_p}, \qquad
F_0=6\,700\cdot\frac{100}{\pi} G_{m_p},
\end{align}
which means that the initial densities of wild males and females are slightly below their equilibrium values.

We study three scenarios again, with different locations for the release of sterile mosquitoes:
\begin{itemize}
\item[(a)] {Uniformly distributed}: $\Lambda_{per}^a(x,y)=33\,000$
\item[(b)] Gaussian, centered: $\Lambda_{per}^b(x,y)=\frac{100}{\pi} 33\,000\, G_{m_p}$
\item[(c)] Gaussian, top right: $\Lambda_{per}^c(x,y)=\frac{1}{0.0314031} 33\,000\, G_{p_{tr}}$,
\end{itemize}
where the midpoint $m_p$ and $p_{tr}$ are defined as in the previous subsection.

From a practical perspective, implementation of Gaussian-type impulsive release strategies (b) and (c) is logistically easier and also cheaper compared to strategy (a) that requires reaching a sufficiently uniform spatial coverage throughout the target area $\Omega$. Nonetheless, recent advances in aerial release technologies have demonstrated that sterile male mosquitoes can be released over large areas with sufficiently high spatial homogeneity \cite{Bouyer2020}.

In Figure~\ref{fig.imp_int}, we show the course of the total populations of mosquitoes $\int_\Omega M {(x,y)} \text{d}x {\text{d}y}, \ \int_\Omega F {(x,y)} \text{d}x {\text{d}y},$ and $\int_\Omega M_S {(x,y)} \text{d}x {\text{d}y}$ over time. The total population of sterile mosquitoes is equal in the three cases and follows the same pattern, alternating between increases every 20 days (due to releases) and decreases (due to natural mortality). 
\begin{figure}[t]
\centering
\begin{minipage}{0.48\textwidth}
\includegraphics[width=\textwidth]{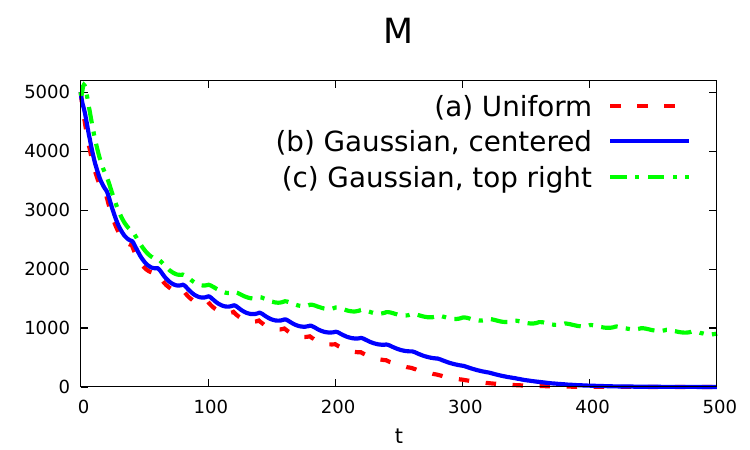}
\end{minipage}
\hfil 
\begin{minipage}{0.48\textwidth}
\includegraphics[width=\textwidth]{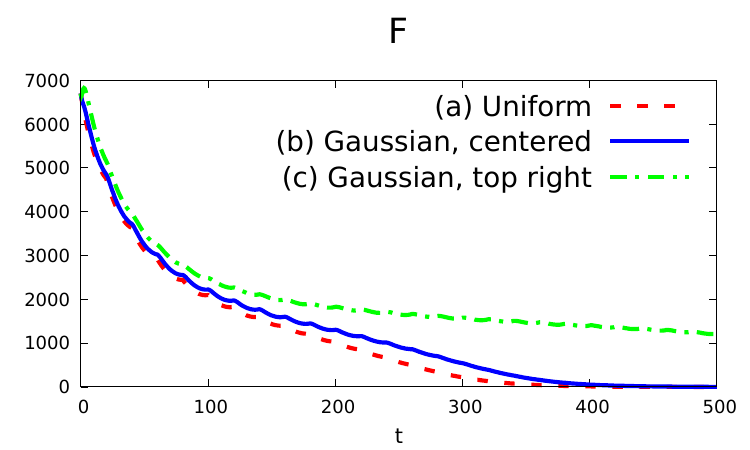}
\end{minipage}
\hfil 
\begin{minipage}{0.48\textwidth}
\includegraphics[width=\textwidth]{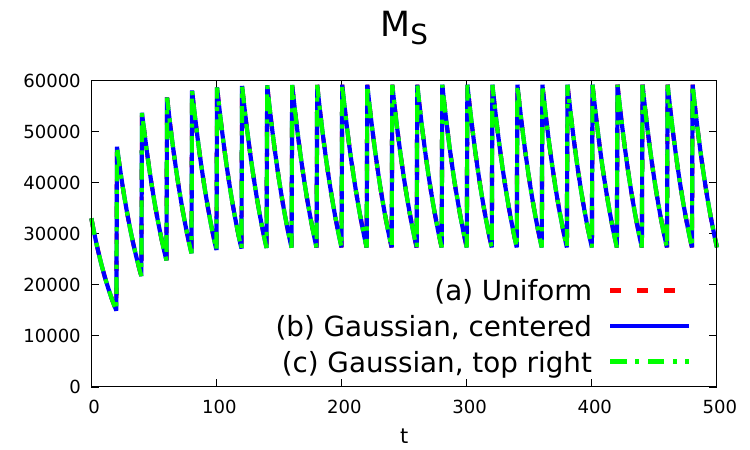}
\end{minipage}
\caption{\label{fig.imp_int} Behaviour of the total mosquito populations $\int \limits_\Omega M(x,y) \text{d}x {\text{d}y}$ (top left), \, $\int \limits_\Omega F(x,y) \text{d}x \text{d}y,$ (top right), \, and $\int \limits_\Omega M_S(x,y) \text{d}x \text{d}y,$ (bottom) over time $t$ (in days) for impulsive releases. 
}    
\end{figure}
The population of wild mosquitoes $M$ and $F$ decays almost monotonically for all strategies, showing a similar rate of decay during the first 50 days, which further slows down more for strategy (c) than for (b) and (a). Such a pronounced initial decline can be explained by overshooting the carrying capacity, due to a large simultaneous release of sterile insects.

A closer look at the curves for $M$ and $F$ under strategy (c) also reveals a small rise in the first three days (see the green dash-dotted lines in the two upper charts in Figure~\ref{fig.imp_int}). Indeed, the population of wild mosquitoes, initially concentrated around the midpoint $m_p$, continues to grow toward equilibrium. At the same time, sterile males, released relatively far from $m_p$, have not yet reached the center of $\Omega$ by natural diffusion.

From the two upper charts in Figure ~\ref{fig.imp_int}, we deduce that wild mosquitoes can be completely eliminated by strategies (a) and (b), while strategy (c) fails to do so within the time frame  considered. Namely, after $T=500$ days, the remaining wild population stands at $F(T)\approx 1\,220$ and $M(T)\approx 906$. This happens because sterile mosquitoes are released in the ``wrong'' location, that is, too far from the concentration of the wild population. However, a longer calculation reveals that the elimination will ultimately take place on a much longer timescale. In fact, the population of wild mosquitoes  drops below 1 from $t\geq 925$, if the releases continue every 20 days. 

In general, the equally distributed strategy (a) performs best from $t\geq 20$.
To explain this outcome, we should contrast the spatial distribution of the sterile and wild males shortly after a massive impulsive release and immediately before the next one under the three strategies (a), (b), and (c). 

\begin{figure}[t]
\begin{tabular}{ccc}
%{$t=0$} & {$t=5$} & {$t=20^{-}$} \\
\includegraphics[width=0.33\textwidth]{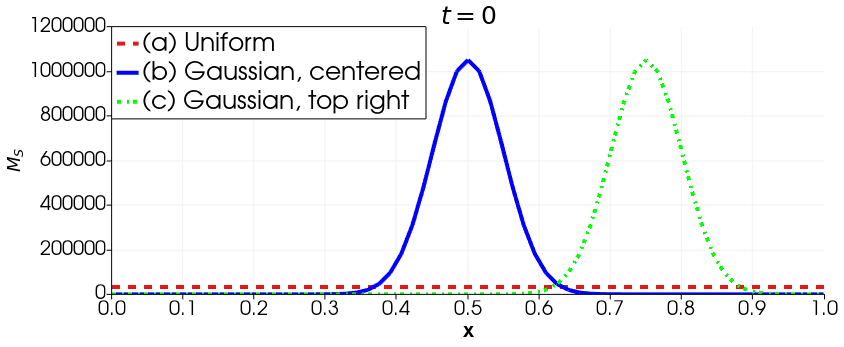} %\hfil
& \includegraphics[width=0.33\textwidth]{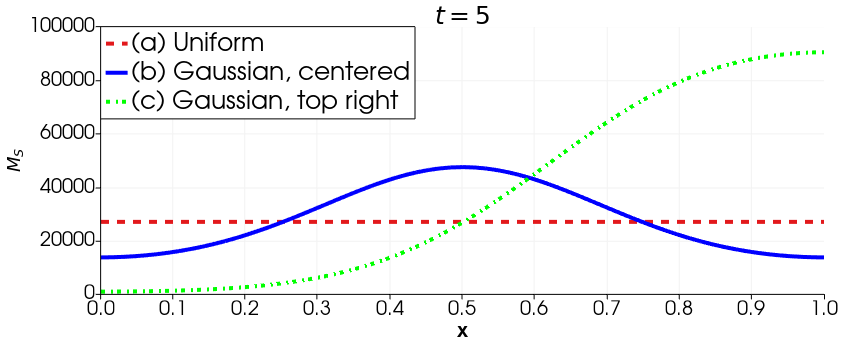} %\hfil
& \includegraphics[width=0.33\textwidth]{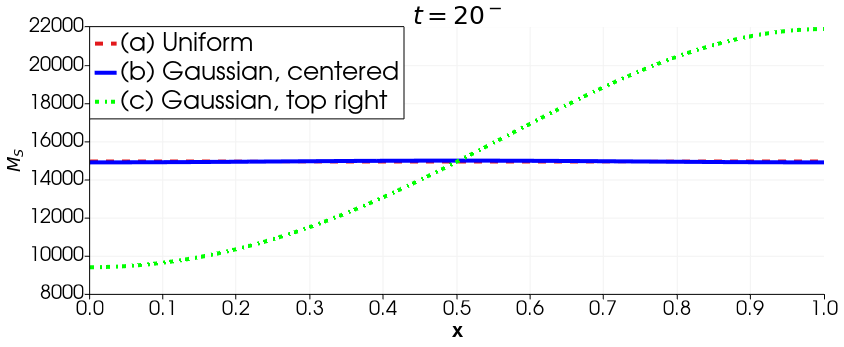} \\ %\hfil
%{$t=25$} & {$t=40^{-}$} & {$t=45$} \\
\includegraphics[width=0.33\textwidth]{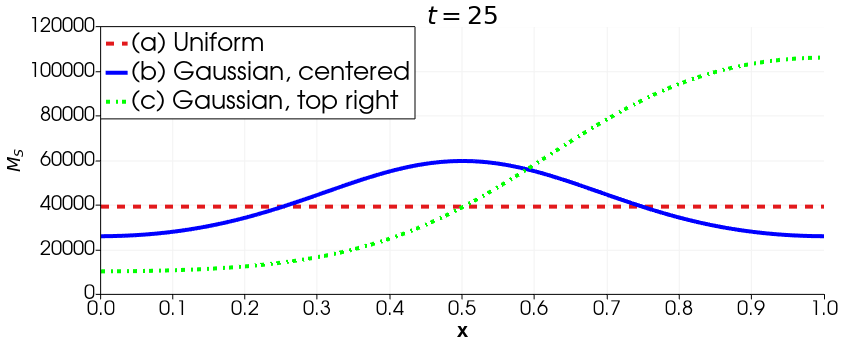} %\hfil
& \includegraphics[width=0.33\textwidth]{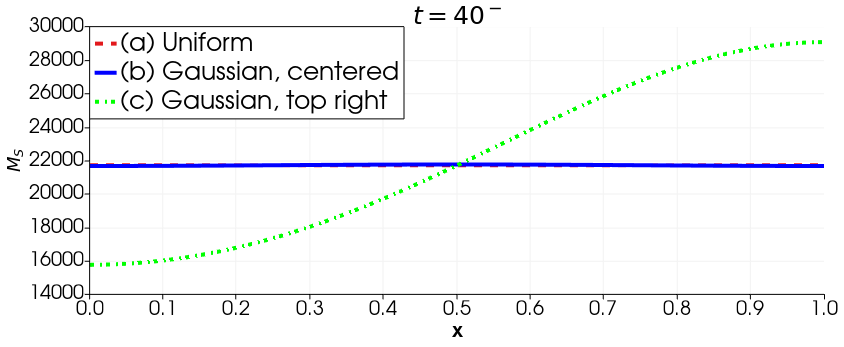} %\hfil
& \includegraphics[width=0.33\textwidth]{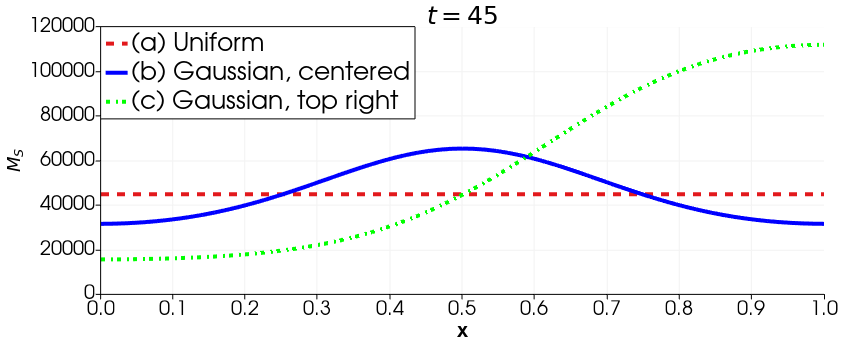}
\end{tabular}
\caption{\label{fig.imp_space_MS}Plots of $M_S$ over the diagonal line $\Gamma_D$ at times $t=0$, $t=5$, $t=20^-$, $t=25$, $t=40^-$, and $t=45$ (top left to bottom right) for the three different impulsive control strategies.}
\end{figure}

\begin{figure}[t!]
\begin{tabular}{ccc}
%{$t=0$} & {$t=5$} & {$t=20^{-}$} \\
\includegraphics[width=0.33\textwidth]{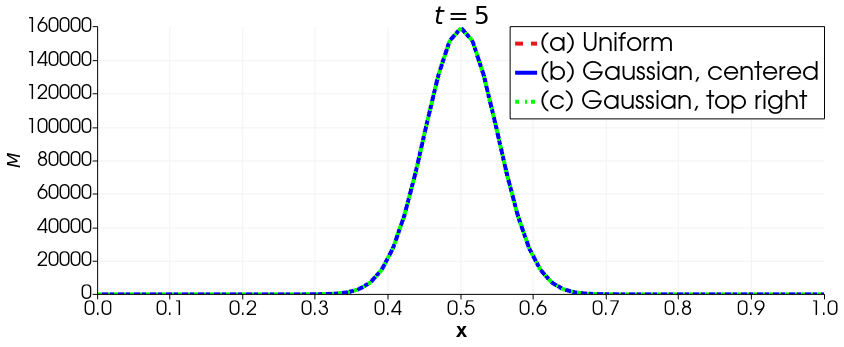} %\hfil
& \includegraphics[width=0.33\textwidth]{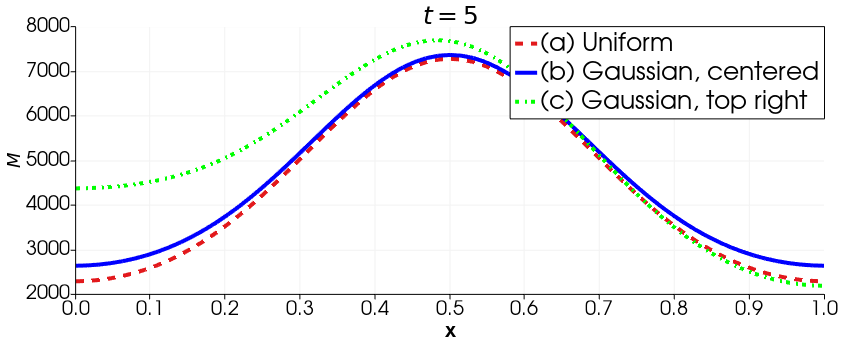} % \hfil
&  \includegraphics[width=0.33\textwidth]{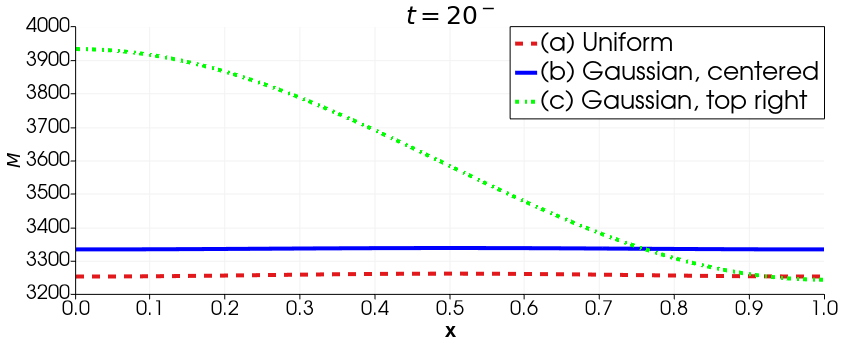} \\ %\hfil
%{$t=25$} & {$t=40^{-}$} & {$t=45$} \\
\includegraphics[width=0.33\textwidth]{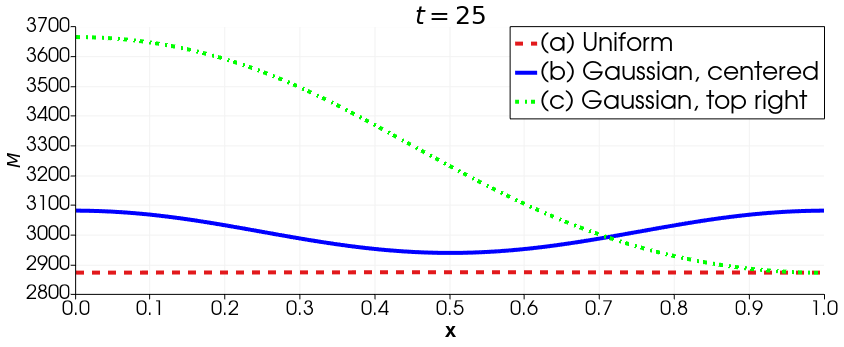} %\hfil
& \includegraphics[width=0.33\textwidth]{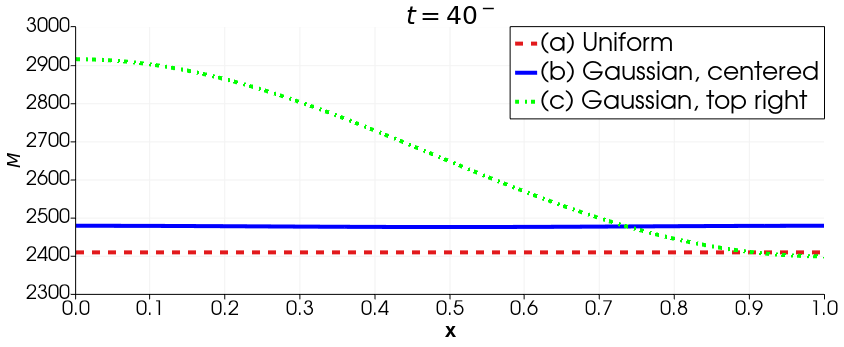} %\hfil
&\includegraphics[width=0.33\textwidth]{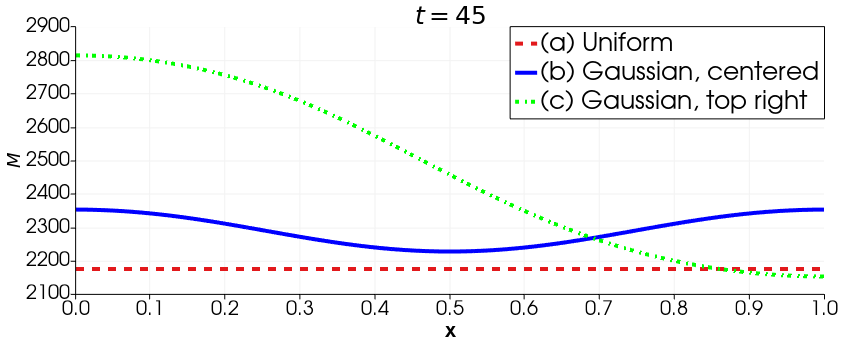}
\end{tabular}
\caption{\label{fig.imp_space} Plots of $M$ over the diagonal line $\Gamma_D$ at times $t=0$, $t=5$, $t=20^-$, $t=25$, $t=40^-$, and $t=45$ (top left to bottom right) for the three different impulsive control strategies.}
\end{figure}

To this purpose, we turn to Figures~\ref{fig.imp_space_MS} and~\ref{fig.imp_space} depicting, respectively, the behavior of $M_S$ and $M$, across the diagonal line $\Gamma_D$ defined by \eqref{G-line} at times $t \in \{ 0, 5, 20^{-}, 25, 40^{-}, 45 \}$  for the three control strategies (a), (b), and (c). Correlating the charts in Figures~\ref{fig.imp_space_MS} and~\ref{fig.imp_space}, one can see how massive space-dependent impulsive releases of sterile males may affect the spatial distribution and density of the wild population. Here, it is worth mentioning that the population of wild females, $F$, behaves very similarly to the population of the wild males, $M$, and, therefore, the model's outcomes for $F$ are not displayed. 

Concerning $M_{{S}}$ (Figure~\ref{fig.imp_space_MS}), we observe that the initial peak in strategy (b) diffuses quickly across the entire domain, resulting in a uniform distribution of sterile males before the next release at $t=20^{-}$. In contrast, the initial peak in strategy (c) will spread around the upper right corner of $\Omega$ until the consequent release at $t=20^{-}$ (note also the different scalings on the vertical axis evincing that the total population of sterile males decreases over the time interval $[0, 20^{-}]$). 

The effect of the initial impulsive release of sterile males, performed by the three different strategies (a), (b), and (c), on the population of wild males is illustrated in the three upper charts of Figure \ref{fig.imp_space}. Here, we observe that the uniform and centered Gaussian release strategies (a) and (b) act similarly upon the wild population, leading to its even dispersal at comparable total densities just before the consequent release at $t=20^{-}$. The biased Gaussian-type initial impulsive release (strategy (c))   leads, however, to dispersal of the wild males towards the opposite side of the domain before the next release, and the total density of $M$ at $t=20^{-}$ is considerably higher compared to the cases (a) and (b). The latter indicates that a single massive release in the ``wrong place'' does not reduce the wild population. 

The effect of the two subsequent impulsive releases (at $t=20$ and $t=40$) on the population distribution of both $M_S$ and $M$ along the diagonal line $\Gamma_D$ is shown in the three lower charts of Figures~\ref{fig.imp_space_MS} and~\ref{fig.imp_space}. In Figure ~\ref{fig.imp_space_MS}, we observe that, under the centered Gaussian strategy (b), the peak becomes visible again at $t=25$, after the second release of sterile mosquitoes, and also at $t=45$ after the third one. Furthermore, at $t=40^-$, the sterile mosquitoes are spread again over the entire domain, very similarly to $t=20^{-}$, thus indicating convergence towards a periodic state. When sterile males are released in the upper right corner of $\Omega$ (strategy (c)), we observe a high density of $M_S$ in this corner before and after the subsequent releases. However, we also see that the density of sterile insects in the opposite corner of $\Omega$ (i.e., close to the origin) slowly increases as time passes, evincing the tendency to dispersal of the $M_S$-population across the entire region as well. 

To understand the impact of the second and third impulsive releases on the distribution of the wild population, we turn to the three lower charts in Figure ~\ref{fig.imp_space}. Even though just before the second release, at $t=20^-$, the {wild} population was almost homogeneous for all strategies, a few days after the second release (at $t=25$), the $M$-population exhibited a decrease on the right for strategy (c), in the center for strategy (b) and uniformly for strategy (a). Notably, a local minimum in the center can be observed for the control strategy (b) at $t=25$ and also at $t=45$ (five days after each impulsive release), indicating that the uniformly distributed strategy (a) is more efficient. It is also worth noting that after the second release, the total density of the wild population declines as time passes under all three strategies, with strategy (a) acting as the fastest and strategy (c) being the slowest. 

For better illustration, we show the complete spatial distributions of $M$ over $\Omega$ at $t=25$ in Figure~\ref{fig.imp_T5}. The ranges of variation can be inferred from the different color scales. While for strategy (a), the population is almost uniform, we observe a local minimum in the center for strategy (b) and a minimum in the top right corner for strategy (c), both corresponding to the location where sterile mosquitoes have been released. 
%
% to continue

At time $t=40^-$, the population {of wild males} is again almost {uniform} in cases (a) and (b) (due to diffusion), but still bears a smaller average value under strategy (a). For strategy (c), we observe a minimum on the right, where the sterile mosquitoes still keep their maximal density (cf. the bottom rows in  Figures~\ref{fig.imp_space_MS} and~\ref{fig.imp_space}). At $t=45$ (after the third release at $t=40$), the spatial distribution of the $M$-population is similar to that at $t=25$, but with a lower average density. Overall, the wild mosquitoes are reduced very gradually in the bottom left part of the region $\Omega$ under strategy (c), which leads to the slow elimination observed in Figure~\ref{fig.imp_int}, compared to the other two cases (a) and (b).

Our simulations thus revealed that impulsive releases (studied in this subsection) and constant releases (presented in Subsection \ref{subsec.loc}) induce spatial behavior that is rather similar in both wild and sterile insects. However, from a practical standpoint, implementing impulsive releases is much more feasible than constant releases.

\begin{figure}[t]

\begin{minipage}{0.33\textwidth}
\includegraphics[width=0.93\textwidth]{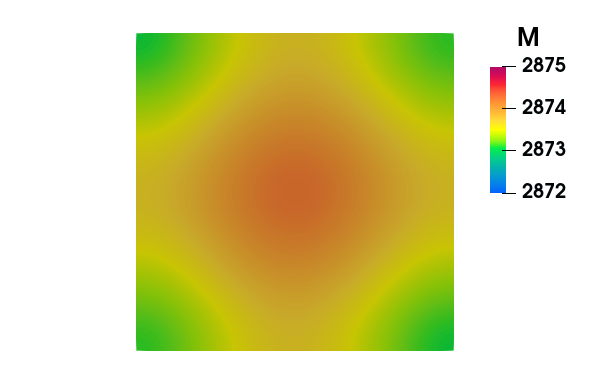}
\end{minipage}\hfil 
\begin{minipage}{0.33\textwidth}
\includegraphics[width=\textwidth]{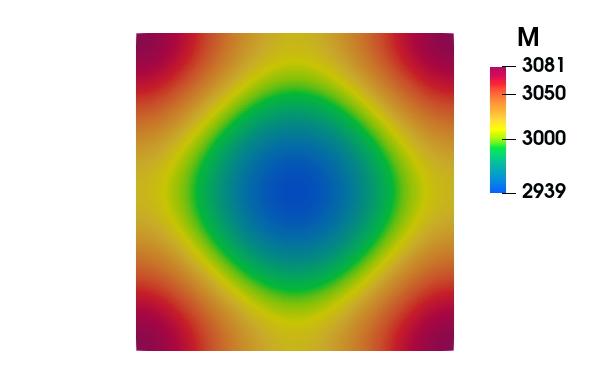}
\end{minipage}\hfil 
\begin{minipage}{0.33\textwidth}
\includegraphics[width=\textwidth]{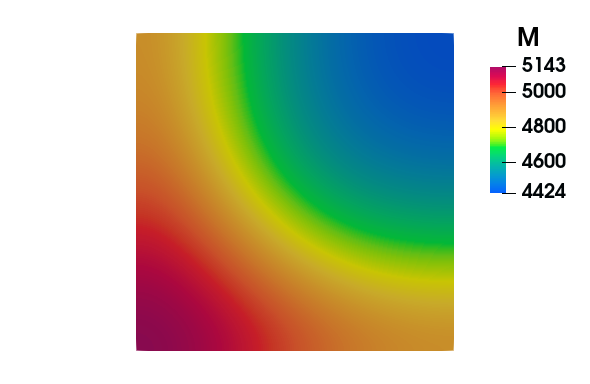}
\end{minipage}
\caption{\label{fig.imp_T5} Spatial distribution of $M$ at $t=25$ for strategies (a), (b) and (c) (left to right).}
\end{figure}

\section{Conclusions}
\label{sec-conc}

In the present work, we developed and analyzed a {spatiotemporal} reaction–diffusion model describing the dynamics of wild male, wild female, and sterile male \textit{Aedes aegypti} mosquitoes under Sterile Insect Technique (SIT) interventions. The model combines sex structure, Ricker-type recruitment, and two-dimensional diffusion into a biologically consistent and mathematically rigorous framework for spatial SIT analysis. Unlike logistic formulations that impose density dependence on adult mortality, our approach places competition at the recruitment stage, consistent with \textit{Aedes aegypti} biology and naturally leading to a threshold mechanism governed by mating competition and recruitment limitation rather than adult crowding effects.

From an analytical standpoint, we established global existence, uniqueness, nonnegativity, and boundedness of solutions, ensuring biological consistency and well-posed spatio-temporal dynamics under operationally relevant release strategies. On the numerical side, we proposed and analyzed a semi-implicit time discretization combined with a spatial finite element approximation, derived an error estimate for the time-stepping schemes, and confirmed stability and convergence. The simulations further indicated that the time step $\delta t$  needs to be chosen sufficiently small to prevent excessive numerical diffusion if the backward Euler variant of the time-stepping schemes is used ($\theta=1$).

The numerical investigations revealed the permanence of a critical release threshold separating two qualitatively distinct long-term outcomes: either persistence of the wild mosquito population at a reduced equilibrium level or its eventual local elimination. This threshold phenomenon, initially detected in the ODE-type model \cite{Bliman2019} without spatial heterogeneity, persists in the {spatiotemporal} setting and is strongly influenced not only by the magnitude and timing, but also by the spatial distribution of sterile male releases. The simulations further demonstrate that spatial heterogeneity plays a decisive role: non-uniform releases can create localized differences in suppression intensity whose spatial spread depends on the diffusion rate and the placement of release sources.

While both continuous and periodic impulsive release strategies are effective within the mathematical framework, only impulsive releases are feasible in practice, since field programs operate at discrete time intervals. Consequently, the operational success of SIT-based interventions hinges on selecting the release sizes and frequencies that compensate for mortality and spatial dispersal between releases; otherwise, the system would converge to coexistence rather than elimination.

From a practical standpoint, the results support several operational guidelines: release sizes should exceed the critical threshold with a safety margin; distributed spatial deployment is preferable to highly localized releases; and an initial high-intensity release phase improves suppression speed and robustness. However, evenly distributed releases over large areas may entail higher operational costs, particularly when they require advanced aerial release technologies \cite{Bouyer2020}. If such infrastructure is unavailable, a cost-effective alternative is to position release sources at locations with the highest population densities of the wild mosquitoes identified through spatial monitoring. In this context, adaptive targeting of high-density regions enhances efficiency and reduces sterile male wastage while remaining consistent with the model’s spatial predictions.

{Overall, the model provides a quantitative tool for designing spatially informed SIT programs. By combining rigorous analysis with computational simulations, it bridges theoretical insights and operational decision-making, offering a framework to support evidence-based vector control strategies in spatially heterogeneous environments.}

Future work may extend the present spatial framework in several directions. From a modeling perspective, incorporating first-order (transport) terms in space could help account for directed movement toward mosquito breeding habitats or areas of human aggregation that attract host-seeking female mosquitoes for blood feeding.  Alternative formulations, such as hyperbolic delay models with finite speed of propagation \cite{Hadeler1999, Holmes1993} or wave-type structures for epidemic models as in \cite{Schl2025}, may also provide a more refined description of spatial spread. In addition, introducing open-boundary effects to capture possible inflows and outflows of wild mosquitoes, together with environmental seasonality, parameter uncertainty, and data-assimilation techniques, would enhance realism and further strengthen the predictive and operational value of spatial SIT models.

{\paragraph{Acknowledgement}
S.F. and O.V. acknowledge support by the German Federal Ministry of Economic Affairs and Energy within the project "South American Competence Center of Scientific Computing in Health and Climate" (SaC$^3$) of the German Academic Exchange Service (DAAD), which is a collaboration between 5 South American and 6 German institutions of higher education, including Universidad del Valle and the University of Konstanz. O.E. acknowledges financial support by the Zukunftskolleg of the University of Konstanz by means of a Herz fellowship.}

\bibliographystyle{plain}

\begin{thebibliography}{10}

\bibitem{AbRaSc025}
H.~Abboubakar, R.~Racke, and N.~Schlosser.
\newblock {ODE} and {PDE} models for {COVID}-19, with reinfection and
  vaccination process for {Cameroon} and {Germany}.
\newblock {\em Discrete and Continuous Dynamical Systems -- Series B},
  36:302--334, 2026.

\bibitem{Almeida2022}
L.~Almeida, J.~Estrada, and N.~Vauchelet.
\newblock Wave blocking in a bistable system by local introduction of a
  population: application to sterile insect techniques on mosquito populations.
\newblock {\em Mathematical Modelling of Natural Phenomena}, 17:22, 2022.

\bibitem{Almeida2023}
L.~Almeida, A.~L{\'e}culier, and N.~Vauchelet.
\newblock Analysis of the rolling carpet strategy to eradicate an invasive
  species.
\newblock {\em SIAM Journal on Mathematical Analysis}, 55(1):275--309, 2023.

\bibitem{FEniCS}
M.~Aln{\ae}s, J.~Blechta, J.~Hake, A.~Johansson, B.~Kehlet, A.~Logg,
  C.~Richardson, J.~Ring, M.E. Rognes, and G.N. Wells.
\newblock The {FE}ni{CS} project version 1.5.
\newblock {\em Archive of Numerical Software}, 3(100), 2015.

\bibitem{Anguelov2020p}
R.~Anguelov, Y.~Dumont, and I.~Yatat~Djeumen.
\newblock On the use of traveling waves for pest/vector elimination using the
  sterile insect technique, 2020.
\newblock Preprint. {arXiv}:2010.00861v1.

\bibitem{Anguelov2020}
R.~Anguelov, Y.~Dumont, and I.~Yatat~Djeumen.
\newblock Sustainable vector/pest control using the permanent sterile insect
  technique.
\newblock {\em Mathematical Methods in the Applied Sciences},
  43(18):10391--10412, 2020.

\bibitem{AscherRuuthWetton95}
U.M. Ascher, S.J. Ruuth, and B.T.R. Wetton.
\newblock Implicit-explicit methods for time-dependent partial differential
  equations.
\newblock {\em SIAM Journal on Numerical Analysis}, 32(3):797--823, 1995.

\bibitem{Berg1993}
H.~C. Berg.
\newblock {\em Random walks in biology}.
\newblock Princeton University Press, Princeton NJ, USA, 1993.

\bibitem{Bliman2019}
P.-A. Bliman, D.~Cardona-Salgado, Y.~Dumont, and O.~Vasilieva.
\newblock Implementation of control strategies for sterile insect techniques.
\newblock {\em Mathematical Biosciences}, 314:43--60, 2019.

\bibitem{Bliman2023}
P.-A. Bliman, Y.~Dumont, O.E. Escobar-Lasso, H.J. Martinez-Romero, and
  O.~Vasilieva.
\newblock Sex-structured model of \textit{Wolbachia} invasion and design of
  sex-biased release strategies in \textit{Aedes spp.} mosquitoes populations.
\newblock {\em Applied Mathematical Modelling}, 119:391--412, 2023.

\bibitem{Bourtzis2021}
K.~Bourtzis and M.~Vreysen.
\newblock Sterile {I}nsect {T}echnique ({SIT}) and its applications.
\newblock {\em Insects}, 12(7):638, 2021.

\bibitem{Bouyer2020}
J.~Bouyer, N.~Culbert, A.~Dicko, M.~Pacheco, J.~Virginio, M.~Pedrosa,
  A.~Garziera, L.and~Pinto, A.~Klaptocz, J.~Germann, T.~Wallner,
  G.~Salvador-Herranz, R.~Argiles~Herrero, H.~Yamada, F.~Balestrino, and
  M.~Vreysen.
\newblock Field performance of sterile male mosquitoes released from an
  uncrewed aerial vehicle.
\newblock {\em Science Robotics}, 5(43):eaba6251, 2020.

\bibitem{Capasso1993}
V.~Capasso.
\newblock {\em Mathematical structures of epidemic systems}, volume~97 of {\em
  Lecture Notes in Biomathematics}.
\newblock Springer, Berlin, Germany, 1993.

\bibitem{Cardona2025}
D.~Cardona-Salgado, Y.~Dumont, and O.~Vasilieva.
\newblock Natural population dynamics of {A}sian citrus psyllid,
  \textit{Diaphorina citri}, and its control based on pheromone trapping.
\newblock {\em Mathematical Biosciences}, 390:109540, 2025.

\bibitem{ChOuYa006}
M.~Choulli, E.M. Ouhabaz, and M.~Yamamoto.
\newblock Stable determination of a semilinear termin in a parabolic equation.
\newblock {\em Commun. Pure Appl. Math.}, 5(3):447--462, 2006.

\bibitem{Silva2022}
M.~da~Silva, P.~Lug{\~a}o, F.~Prezoto, and G.~Chapiro.
\newblock Modeling the impact of genetically modified male mosquitoes in the
  spatial population dynamics of \textit{Aedes aegypti}.
\newblock {\em Scientific Reports}, 12(1):9112, 2022.

\bibitem{Araujo2025}
A.~de~Araujo, B.~Boldrini, J.and~Calsavara, and M.~Correa.
\newblock Analysis and numerical approximation of a mathematical model for
  \textit{Aedes aegypti} populations.
\newblock {\em Computers \& Mathematics with Applications}, 180:214--241, 2025.

\bibitem{Dufourd2012}
C.~Dufourd and Y.~Dumont.
\newblock Modeling and simulations of mosquito dispersal. the case of
  \textit{Aedes albopictus}.
\newblock {\em Biomath}, 1(2):ID--1209262, 2012.

\bibitem{Dufourd2013}
C.~Dufourd and Y.~Dumont.
\newblock Impact of environmental factors on mosquito dispersal in the prospect
  of sterile insect technique control.
\newblock {\em Computers \& Mathematics with Applications}, 66(9):1695--1715,
  2013.

\bibitem{Escobar2021}
O.~Escobar-Lasso and O.~Vasilieva.
\newblock A simplified monotone model of \textit{Wolbachia} invasion
  encompassing \textit{Aedes aegypti} mosquitoes.
\newblock {\em Studies in Applied Mathematics}, 146(3):565--585, 2021.

\bibitem{Esteva2005}
L.~Esteva and H.~Yang.
\newblock Mathematical model to assess the control of \textit{Aedes aegypti}
  mosquitoes by the sterile insect technique.
\newblock {\em Mathematical biosciences}, 198(2):132--147, 2005.

\bibitem{FreiHeinlein2023}
S.~Frei and A.~Heinlein.
\newblock Towards parallel time-stepping for the numerical simulation of
  atherosclerotic plaque growth.
\newblock {\em Journal of Computational Physics}, 491:112347, 2023.

\bibitem{Hadeler1999}
K.~P. Hadeler.
\newblock {\em Reaction transport systems in biological modelling}, pages
  95--150.
\newblock Springer Berlin Heidelberg, Berlin, Heidelberg, 1999.

\bibitem{Harrington2005}
L.~C. Harrington, T.~W. Scott, K.~Lerdthusnee, R.~C. Coleman, A.~Costero, G.~G.
  Clark, J.~J. Jones, S.~Kitthawee, P.~Kittayapong, R.~Sithiprasasna, and J.~D.
  Edman.
\newblock Dispersal of the dengue vector \textit{Aedes aegypti} within and
  between rural communities.
\newblock {\em The American Journal of Tropical Medicine and Hygiene},
  72(2):209--220, 2005.

\bibitem{Henry1981}
D.~Henry.
\newblock {\em Geometric theory of semilinear parabolic equations}, volume 840
  of {\em Lecture Notes in Mathematics}.
\newblock Springer-Verlag, Berlin, Germany, 1981.

\bibitem{Holmes1993}
E.E. Holmes.
\newblock Are diffusion models too simple? a comparison with telegraph models
  of invasion.
\newblock {\em The American Naturalist}, 142(5):779--795, 1993.

\bibitem{Huang2017}
M.~Huang, X.~Song, and J.~Li.
\newblock Modelling and analysis of impulsive releases of sterile mosquitoes.
\newblock {\em Journal of biological dynamics}, 11(1):147--171, 2017.

\bibitem{Jiang2014}
W.~Jiang, X.~Li, and X.~Zou.
\newblock On a reaction-diffusion model for sterile insect release method on a
  bounded domain.
\newblock {\em International Journal of Biomathematics}, 7(3):1450030, 2014.

\bibitem{Leculier2023}
A.~L{\'e}culier and N.~Nguyen.
\newblock A control strategy for the sterile insect technique using
  exponentially decreasing releases to avoid the hair-trigger effect.
\newblock {\em Mathematical Modelling of Natural Phenomena}, 18:25, 2023.

\bibitem{Li2012}
X.~Li and X.~Zou.
\newblock On a reaction-diffusion model for sterile insect release method with
  release on the boundary.
\newblock {\em Discrete \& Continuous Dynamical Systems-Series B},
  17(7):2509–--2522, 2012.

\bibitem{Ma2024}
X.~Ma, L.~Cai, and S.~Li.
\newblock Dynamics of interactive wild and sterile mosquitoes in spatially
  heterogenous environment.
\newblock {\em Discrete and Continuous Dynamical Systems-B}, 29(11):4527--4544,
  2024.

\bibitem{Manoranjan1986}
V.~Manoranjan and P.~van~den Driessche.
\newblock On a diffusion model for sterile insect release.
\newblock {\em Mathematical Biosciences}, 79(2):199--208, 1986.

\bibitem{Mora1983}
X.~Mora.
\newblock Semilinear parabolic problems define semiflows on {C}$^k$ spaces.
\newblock {\em Transactions of the American Mathematical Society},
  278(1):21--55, 1983.

\bibitem{Multerer2019}
L.~Multerer, T.~Smith, and N.~Chitnis.
\newblock Modeling the impact of sterile males on an \textit{Aedes aegypti}
  population with optimal control.
\newblock {\em Mathematical biosciences}, 311:91--102, 2019.

\bibitem{Oliva2021}
C.~Oliva, M.~Benedict, C.~Collins, T.~Baldet, R.~Bellini, H.~Bossin, J.~Bouyer,
  V.~Corbel, L.~Facchinelli, F.~Fouque, M.~Geier, A.~Michaelakis, D.~Roiz,
  F.~Simard, C.~Tur, and L.-C. Gouagna.
\newblock Sterile {I}nsect {T}echnique ({SIT}) against \textit{Aedes} species
  mosquitoes: {A} roadmap and good practice framework for designing,
  implementing and evaluating pilot field trials.
\newblock {\em Insects}, 12(3):191, 2021.

\bibitem{Pang2022}
Y.~Pang, S.~Wang, and S.~Liu.
\newblock Dynamics analysis of stage-structured wild and sterile mosquito
  interaction impulsive model.
\newblock {\em Journal of Biological Dynamics}, 16(1):464--479, 2022.

\bibitem{Parshad2011}
R.~Parshad and F.~Agusto.
\newblock Global dynamics of a pde model for \textit{Aedes aegypti} mosquitoes
  incorporating female sexual preference.
\newblock {\em Dynamics of Partial Differential Equations}, 8(4):311--343,,
  2011.

\bibitem{Peng2012}
R.~Peng and X.-Q. Zhao.
\newblock A reaction--diffusion {SIS} epidemic model in a time--periodic
  environment.
\newblock {\em Nonlinearity}, 25(5):1451, 2012.

\bibitem{Ramirez2022}
C.~A. Ramirez-Bernate, H.~J. Martinez-Romero, and D.~M. Erazo-Borja.
\newblock Control strategies in the spatial population dynamics of
  \textit{Aedes aegypti} vector using sterile mosquitoes and insecticides.
\newblock {\em Universitas Scientiarum}, 27(2):206--232, 2022.

\bibitem{RichterBuch}
T.~Richter.
\newblock {\em Fluid-Structure Interactions: Models, Analysis and Finite
  Elements}, volume 118 of {\em Lecture Notes in Computational Science and
  Engineering}.
\newblock Springer Nature, Cham, Switzerland, 2017.

\bibitem{Sarwar2015}
M.~Sarwar.
\newblock The dangers of pesticides associated with public health and
  preventing of the risks.
\newblock {\em International Journal of Bioinformatics and Biomedical
  Engineering}, 1(2):130--136, 2015.

\bibitem{Schl2025}
N.~Schlosser.
\newblock A wave-type model for age-and space-structured epidemics.
\newblock {\em Nonlinear Analysis: Real World Applications}, 91:104612, 2026.

\bibitem{Smoller1983}
J.~Smoller.
\newblock {\em Shock waves and reaction—diffusion equations}, volume 258 of
  {\em Grundlehren der mathematischen Wissenschaften}.
\newblock Springer-Verlag, Berlin, Germany, 1983.

\bibitem{Strugarek2019}
M.~Strugarek, H.~Bossin, and Y.~Dumont.
\newblock On the use of the sterile insect release technique to reduce or
  eliminate mosquito populations.
\newblock {\em Applied Mathematical Modelling}, 68:443--470, 2019.

\end{thebibliography}

\end{document}